\documentclass[11pt]{article}


\usepackage[total={6in, 9in}]{geometry}

\usepackage{amsmath}
\usepackage {amssymb}
\usepackage{booktabs}
\usepackage[center]{caption}
\usepackage {verbatim}
\usepackage{epsfig,psfrag}
\usepackage{algorithm}
\usepackage{algorithmic}
\usepackage{url}
\usepackage{float}
\usepackage{mathtools}
\usepackage{graphicx}
\usepackage[caption=false]{subfig}
\usepackage{enumerate}
\usepackage{textgreek}
\usepackage{bbm}
\usepackage{comment}
\usepackage{amsthm}

\def\R{{\mathbb R}}
\def\Z{{\mathbb Z}}

\def \Sy{{\mathbb{S}}}

\def\1_xi{\frac{1}{\xi}}

\def\B{{\mathcal B}}

\def\F{{\mathcal F}}

\def\H{{\mathcal H}}

\def\J{{\mathcal J}}

\def\01{\ensuremath{0\mathord{-}1}}

\newcommand{\st} {{\rm s.t.}}

\newtheorem{proposition}{Proposition}
\newtheorem{theorem}{Theorem}

\allowdisplaybreaks

\title{Spectral relaxations and branching strategies for global optimization of mixed-integer quadratic programs}

\author{Carlos J. Nohra\thanks{Mitsubishi Electric Research Laboratories
        (\texttt{nohra@merl.com}).}
		\and Arvind U. Raghunathan\thanks{Mitsubishi Electric Research Laboratories
        (\texttt{raghunathan@merl.com})}.        
        \and Nikolaos V. Sahinidis\thanks{Department of Chemical Engineering,
        Carnegie Mellon University
        (\texttt{sahinidis@cmu.edu}).}}

\usepackage{amsopn}


\begin{document}

\maketitle

\begin{abstract}
We consider the global optimization of nonconvex quadratic programs and mixed-integer quadratic programs. We present a family of convex quadratic relaxations which are derived by convexifying nonconvex quadratic functions through perturbations of the quadratic matrix. We investigate the theoretical properties of these quadratic relaxations and show that they are equivalent to some particular semidefinite programs. We also introduce novel branching variable selection strategies which can be used in conjunction with the quadratic relaxations investigated in this paper. We integrate the proposed relaxation and branching techniques into the global optimization solver BARON, and test our implementation by conducting numerical experiments on a large collection of problems. Results demonstrate that the proposed implementation leads to very significant reductions in BARON's computational times to solve the test problems.
\end{abstract}

\section{Introduction}
\label{intro}

We address the global optimization of nonconvex quadratic programs (QPs) and mixed-integer quadratic programs (MIQPs) of the form:
\begin{equation}
\label{problem_statement}
\begin{array}{cl}
 \underset{x \in \R^n}{\text{min}}\; &x^T Q x + q^T x\\
 \st           & A x = b \\
 			   & C x \leq d \\
               & l \leq x \leq u \\
               & x_i \in \Z, \,\,\, \forall i \in J \subseteq \{1, \dots, n \}               
\end{array}
\end{equation}
where $Q \in \R^{n \times n}$ is a symmetric matrix which may be indefinite, $q \in {\R}^n$, $A \in {\R}^{m \times n}$, $b \in {\R}^m$, $C \in {\R}^{p \times n}$, and $d \in {\R}^p$. We assume that lower and 
upper bounds are finite, i.e., $-\infty < l_i < u_i < \infty, \,\,\, \forall i \in \{1, \dots, n \}$. For the sake of brevity, we use the notation ${\cal X} = \{x \in  {\R}^{n} \,\,\, | \,\,\, A x = b, \,\,\, C x \leq d, \,\,\,  l \leq x \leq u \}$ in the rest of the paper. Note also that even though we allow~\eqref{problem_statement} to include constraints of the form $C x \leq d$, we do not use information from these inequalities in order to convexify this problem.

QPs and MIQPs of the form~(\ref{problem_statement}) arise in a wide variety of applications including facility location and quadratic assignment~\cite{ldbhq:07}, molecular conformation~\cite{pr:94} and max-cut problems~\cite{gw95}. Given their practical importance, these problems have been studied extensively and are known to be very challenging to solve to global optimality. 

State-of-the-art global optimization solvers rely on spatial branch-and-bound algorithms to solve (\ref{problem_statement}) to global optimality. The efficiency of these algorithms primarily depends on the quality of the relaxations utilized in the bounding step. Commonly used relaxations for nonconvex QPs and MIQPs can be broadly classified in three groups. The first group consists of polyhedral relaxations typically derived via factorable programming methods~\cite{mc76,ts:comp:04} and reformulation-linearization techniques (RLT)~\cite{sa99}. The second group is given by semidefinite programming (SDP) relaxations~\cite{bst:11:mp,bw:13,s:87}. The third group involves convex quadratic relaxations derived through separable programming procedures~\cite{pgr:87}, d.c. programming techniques~\cite{t:95}, and quadratic convex reformulation methods~\cite{bel:13,bep:09}.

In this paper, we investigate a family of relaxations which falls under the third group. In particular, we consider convex quadratic relaxations derived by convexifying the objective function of~(\ref{problem_statement}) through uniform diagonal perturbations of $Q$. We revisit a very well-known technique which uses the smallest eigenvalue of $Q$ to convexify the function $x^T Q x$. Through numerical experiments, we show that, despite its simplicity, this technique leads to convex quadratic relaxations which are often significantly tighter than the polyhedral relaxations typically used by state-of-the-art global optimization solvers. Motivated by these promising results, we refine this approach in several directions and make several theoretical and algorithmic contributions.

Our first contribution is a novel convex quadratic relaxation for problems of the form~(\ref{problem_statement}), derived by using information from both $Q$ and the equality constraints $A x = b$. Under this approach, the function $x^T Q x$ is convexified by constructing a perturbation of $Q$ obtained by solving a generalized eigenvalue problem involving both $Q$ and $A$. We show that the resulting relaxation is at least as tight as the relaxation constructed by using the smallest eigenvalue of $Q$.

In our second contribution, we consider another convex quadratic relaxation in which the function $x^T Q x$ is convexified by using the smallest eigenvalue of $Z^T Q Z$, where $Z$ is a basis for the nullspace of $A$. We devise a simple procedure which allows us to approximate the bound given by this relaxation without having to compute $Z$. Moreover, we show that the relaxations obtained through this technique are at least as tight as the other two quadratic relaxations mentioned above. Unlike the polyhedral and SDP relaxations which are formulated in a higher dimensional space, the quadratic relaxations considered in this paper are constructed in the space of the original problem variables, which makes them very inexpensive to solve.

In our third contribution, we prove that the aforementioned quadratic relaxations are equivalent to some particular SDP relaxations. These results facilitate the theoretical comparisons with other relaxations that have been proposed in the literature. In particular, we show that the relaxation based on the smallest eigenvalue of $Z^T Q Z$ is the best among the class of relaxations considered in this paper.

Our fourth contribution is a method for improving the proposed quadratic relaxations with branching. We introduce a novel eigenvalue-based branching variable selection strategy for nonconvex binary quadratic programs. This strategy involves an effective approximation of the impact of branching decisions on relaxation quality.

In order to investigate the impact of the proposed techniques on the performance of branch-and-bound algorithms, we implement the quadratic relaxations and branching strategies considered in this paper in the state-of-the-art global optimization solver BARON~\cite{sahinidis:baron:96}. The new quadratic relaxations are incorporated in BARON's portfolio of relaxations and are invoked according to a new dynamic relaxation selection rule which switches between different classes of relaxations based on their relative strength. We test our implementation by conducting numerical experiments on a large collection of problems. Results demonstrate that the proposed implementation leads to a very significant improvement in the performance of BARON. Moreover, for many of the test problems, our implementation results in a new version of BARON which outperforms other state-of-the-art solvers including CPLEX and GUROBI.

The remainder of this paper is organized as follows. In \S\ref{polyhedral_sdp_quadratic_relaxations} we review various relaxations which have been considered in the literature for bounding nonconvex QPs and MIQPs. Then, in \S\ref{spectral_relaxations} we present the convex quadratic relaxations considered in this paper and investigate their theoretical properties. In \S\ref{spectral_branching} we introduce novel eigenvalue-based branching strategies. This is followed by a description of our implementation in \S\ref{implementation}. In \S\ref{computational_results}, we present the results of a computational study which includes a comparison between different classes of relaxations, an analysis of the impact of the proposed implementation on the performance of BARON, and a comparison between several global optimization solvers. Finally, \S\ref{conclusions} presents conclusions from this work.

\subsection*{Notation}
We denote by $\Z$, $\R$ and $\R_{\geq 0}$ the set integer, real and nonnegative real numbers, respectively. We use $\mathbbm{1} \in \R^n$ to denote a vector of ones. The $i$-th element of $x \in \R^n$ is denoted by $x_i$. Given $d \in \R^n$, $\text{diag}(d)$ denotes the diagonal matrix whose diagonal entries are given by the elements of $d$. The $i$-th row of a matrix $A \in \R^{m \times n}$ is denoted by $A_{i \cdot}$, and its $(i,j)$-th entry by $A_{ij}$. Let $\Sy^{n}$ denote the set of $n \times n$ real, symmetric matrices. Given $M \in \Sy^n$, we use $\lambda_i$ to represent its $i$-th eigenvalue and $v^i$ for the corresponding eigenvector. For $M \in \Sy^n$, the notation $M \succcurlyeq 0$ and $M \succ 0$, indicates that $M$ is positive semidefinite and positive definite, respectively. We denote by $I_n$ the $n \times n$ identity matrix. Let $M, N \in {\Sy}^{n}$ with $N \succ 0$. We use $\lambda_{\text{min}} (M)$ to represent the smallest eigenvalue of $M$. Similarly, we denote by $\lambda_{\text{min}} (M, N)$ the smallest generalized eigenvalue of the problem 
$M v = \lambda N v$, where $v \in {\R}^{n}$.  The inner product between $M, P \in \Sy^{n}$ is denoted by $\langle M, P \rangle = \sum_{i=1}^n\sum_{j=1}^n M_{ij}P_{ij}$.

\section{Current relaxations for nonconvex QPs and MIQPs}
\label{polyhedral_sdp_quadratic_relaxations}

In this section, we review various types of relaxations that have been proposed for bounding~\eqref{problem_statement}.

\subsection{Polyhedral relaxations}
\label{sec:polyhedral_relaxations}

One of the simplest relaxations for~\eqref{problem_statement} can be derived via factorable programming techniques~\cite{mc76,ts:comp:04}, leading to the linear program:
\begin{subequations}
\label{McCormick}
\begin{align}
\underset{x \in {\cal X}, X}{\text{min}}\;\; & \sum\limits_{i = 1}^{n}
\sum\limits_{j = 1}^{n} Q_{ij} X_{ij}  + \sum\limits_{i = 1}^{n} q_i x_{i} \label{McCormick_obj} \\
  \st\;\;      & X_{ij} \geq l_i x_j + l_j x_i - l_i l_j, \; i = 1,\ldots,n, j = i,\ldots,n, \label{McCormick_c1} \\
			   & X_{ij} \geq u_i x_j + u_j x_i - u_i u_j, \; i = 1,\ldots,n, j = i,\ldots,n, \label{McCormick_c2} \\
			   & X_{ij} \leq l_i x_j + u_j x_i - l_i u_j, \; i = 1,\ldots,n, j = i,\ldots,n, \label{McCormick_c3} \\
			   & X_{ij} \leq u_i x_j + l_j x_i - u_i l_j, \; i = 1,\ldots,n, j = i,\ldots,n, \label{McCormick_c4} \\
			   & X_{ij} = X_{ji}, \; i = 1,\ldots,n, j = (i+1),\ldots,n, \label{McCormick_c5}
\end{align}
\end{subequations}
where $X$ is a symmetric matrix of introduced variables and~\eqref{McCormick_c1}--\eqref{McCormick_c4} are the so-called \textit{McCormick inequalities}. This relaxation is often referred to as the \textit{McCormick relaxation} of~\eqref{problem_statement}. Even though this relaxation is simple to implement, it often leads to relatively weak bounds, and as a result, it is typically tightened by adding various classes of valid inequalities~\cite{bkst:15,bgl:18,msf:15,zs:13:oms}.

Another polyhedral relaxation for~\eqref{problem_statement} can be constructed through the reformulation linearization techniques (RLT), obtaining the linear program~\cite{sa99}:
\begin{subequations}
\label{RLT_1}
\begin{align}
\underset{x \in {\cal X}, X}{\text{min}}\;\; & \sum\limits_{i = 1}^{n} \sum\limits_{j = 1}^{n} Q_{ij} X_{ij}  + \sum\limits_{i = 1}^{n} q_i x_{i} \\
  \st\;\;      & \textrm{Eqs.~}\eqref{McCormick_c1}-\eqref{McCormick_c5}\\
  			   & \sum\limits_{i = 1}^{n} A_{ki} X_{ij} = b_k x_j, \; k = 1, \dots, m, \; j = 1, \dots n \\
			   & \sum\limits_{i = 1}^{n} C_{ki} X_{ij} - l_j \sum\limits_{i = 1}^{n} C_{ki} x_i - d_k x_j \leq - l_j d_k, \; k = 1, \dots, p, \; j = 1, \dots n \\
			   & \hspace{-1em} -\sum\limits_{i = 1}^{n} C_{ki} X_{ij} + u_j \sum\limits_{i = 1}^{n} C_{ki} x_i + d_k x_j \leq u_j d_k, \; k = 1, \dots, p, \; j = 1, \dots n, \\
			   & \hspace{-1em} -\sum\limits_{i = 1}^{n} \sum\limits_{j = 1}^{n} C_{ki} C_{lj} X_{ij} + \sum\limits_{i = 1}^{n} (d_l C_{ki} + d_k C_{li}) x_i \leq d_k d_l, \; k, l = 1, \dots, p   
\end{align}
\end{subequations}

This relaxation is often referred to as the \textit{first-level RLT relaxation} of~\eqref{problem_statement}.

\subsection{SDP relaxations}
\label{sec:semidefinite_relaxations}

The simplest SDP relaxation for~\eqref{problem_statement} is given by:
\begin{subequations}
\label{Shor_relaxation}
\begin{align}
\underset{x \in {\cal X}, X}{\text{min}}\;\; & \langle Q, X \rangle + q^T x \label{Shor_relaxation_obj} \\
  \st\;\;      & X - x x^T \succcurlyeq 0 \label{Shor_relaxation_c1}
\end{align}
\end{subequations}

This relaxation is often referred to as the \textit{Shor relaxation} of~\eqref{problem_statement}~\cite{s:87}. The Shor relaxation can be strengthened by including additional valid constraints. This can be achieved, for instance, by adding the McCormick inequalities corresponding to the diagonal elements of the matrix $X$, which results in the following SDP:
\begin{subequations}
\label{SDP_d}
\begin{align}
\underset{x \in {\cal X}, X}{\text{min}}\;\; & \langle Q, X \rangle + q^T x \label{SDP_d_obj} \\
  \st\;\;      & X - x x^T \succcurlyeq 0 \label{SDP_d_c1} \\
               & X_{ii} \geq 2 l_i x_i - l_i^2, \; i = 1, \dots, n \label{SDP_d_c2} \\
			   & X_{ii} \geq 2 u_i x_i - u_i^2, \; i = 1, \dots, n \label{SDP_d_c3} \\
			   & X_{ii} \leq u_i x_i + l_i x_i - u_i l_i, \; i = 1, \dots, n \label{SDP_d_c4}
\end{align}
\end{subequations}

It is easy to verify that~\eqref{SDP_d_c2} and~\eqref{SDP_d_c3} are implied by $X - x x^T \succcurlyeq 0$ and hence redundant in this formulation. The SDP~\eqref{SDP_d} can be further tightened by including constraints derived from $Ax = b$. For example, we can add the constraints obtained by lifting the valid equalities $\sum_{j = 1}^{n} A_{kj} x_i x_j = b_k x_i, \; k = 1, \dots, m, \; i = 1, \dots, n$, to the space of $(x,X)$. This leads to the following SDP:
\begin{subequations}
\label{SDP_d_a_x}
\begin{align}
\underset{x \in {\cal X}, X}{\text{min}}\;\; & \langle Q, X \rangle + q^T x \label{SDP_d_a_x_obj} \\
  \st\;\;      & \textrm{Eqs.~}\eqref{SDP_d_c1}-\eqref{SDP_d_c4} \label{SDP_d_a_x_c1} \\
			   & \sum\limits_{j = 1}^{n} A_{kj} X_{ij} = b_k x_i, \; k = 1, \dots, m, \; i = 1, \dots, n \label{SDP_d_a_x_c2}
\end{align}
\end{subequations}

Alternatively, we can add a single constraint derived by lifting the equality $(Ax-b)^T (Ax-b) = 0$ to the space of $(x,X)$, obtaining the following SDP:
\begin{subequations}
\label{SDP_d_a}
\begin{align}
\underset{x \in {\cal X}, X}{\text{min}}\;\; & \langle Q, X \rangle + q^T x \\
  \st\;\;      & \textrm{Eqs.~}\eqref{SDP_d_c1}-\eqref{SDP_d_c4}
  \label{SDP_d_a_c1}\\
			   & \langle A^T A, X \rangle - 2 (A^T b)^T x + b^T b  = 0 \label{SDP_d_a_c2}
\end{align}
\end{subequations}

The SDPs~\eqref{SDP_d_a_x} and~\eqref{SDP_d_a} are equivalent (see Proposition 5 in~\cite{fr:07} for details).

\subsection{Convex quadratic relaxations}
\label{sec:convex_quadratic_relaxations}

In the following, we briefly review three important classes of convex quadratic relaxations for problems of the form~\eqref{problem_statement}.

\subsubsection{Separable programming relaxation}
\label{sec:separable_programming_relaxation}

This relaxation, which is constructed through the eigendecomposition of $Q$ (see~\cite{pgr:87} for details), is given by:
\begin{equation}
\label{EIGDC2}
\begin{array}{cl}
 \underset{x \in {\cal X}, y}{\text{min}}\;\; & \sum\limits_{i: \lambda_i > 0} \lambda_i y_i^2 + \sum\limits_{i: \lambda_i < 0} \lambda_i \left( (L_i + U_i) y_i - L_i U_i \right)  + q^T x \\
 \st\;\;       & y_i = {v^i}^T x, \; i = 1, \dots, n \\
               & L_i \leq y_i \leq U_i, \; i = 1,\dots, n \\
\end{array}
\end{equation}
where $y_i$ are introduced variables and the bounds $L_i$ and $U_i$ are respectively determined by minimizing and maximizing the linear function ${v^i}^T x$ over ${\cal X}$.

\subsubsection{D.C. programming relaxations}
\label{sec:d_c_programming_relaxations}

The key idea behind this approach is to decompose the objective function of~\eqref{problem_statement} as $f(x) = x^T Q x + q^T x = g(x) - h(x)$, where $g(x)$ and $h(x)$ are both convex quadratic functions~\cite{t:95}. In their most generic form, these relaxations can be expressed as:
\begin{equation}
\label{DC_programming_relaxation}
\begin{array}{cl}
 \underset{x \in {\cal X}}{\text{min}}\;\; & g(x) - \bar{h}_{\cal X}(x)
\end{array}
\end{equation}
where $\bar{h}_{\cal X}(x)$ is a concave overestimator of $h(x)$ over ${\cal X}$. A particular d.c. programming relaxation is the classical \textalpha BB relaxation~\cite{amf:95}, which for~\eqref{problem_statement} takes the form:
\begin{equation}
\label{alphaBB_relaxation}
\begin{array}{cl}
 \underset{x \in {\cal X}}{\text{min}}\;\; & x^T Q x + q^T x - \sum_{i = 1}^{n} \alpha_i (x_i - l_i) (u_i - x_i)
\end{array}
\end{equation}
where $\alpha_i, \; i = 1, \dots, n$, are nonnegative parameters chosen such that the objective function of~\eqref{alphaBB_relaxation} is convex over ${\cal X}$. Other particular examples of d.c. programming relaxations are the relaxations constructed via undominated d.c. decompositions~\cite{bl:04}.

\subsubsection{Relaxations based on quadratic convex reformulations}
\label{sec:qcr_based_relaxations}

These relaxations are used in the Quadratic Convex Reformulation (QCR) methods. To illustrate these techniques, consider the binary quadratic program:
\begin{equation}
\label{problem_statement_binary}
\begin{array}{cl}
 \underset{x \in {\cal X}_B}{\text{min}}\; &x^T Q x + q^T x
\end{array}
\end{equation}
where ${\cal X}_B = \{x \in \{0,1\}^n \,\,\, | \,\,\, A x = b, \; C x \leq d \}$. The QCR approaches involve two steps. The first step consists in reformulating~\eqref{problem_statement_binary} to an equivalent binary quadratic program whose continuous relaxation is convex. In the second step, the reformulated problem is solved using a branch-and-bound algorithm. At each node of the branch-and-bound tree, a lower bound is obtained by solving the continuous relaxation of the reformulated problem, which is a convex quadratic program. 

One of the earliest references to these methods is found in a paper by Hammer and Rubin~\cite{hr:70}, in which the following reformulation for~\eqref{problem_statement_binary} is proposed:
\begin{equation}
\label{reformulation_Hammer_and_Rubin}
\begin{array}{cl}
 \underset{x \in {\cal X}_B}{\text{min}}\; &x^T Q_{\lambda} x + q_{\lambda} ^T x\\              
\end{array}
\end{equation}
where $Q_{\lambda} = Q - \min(0, \lambda_{\text{min}} (Q)) I_n$ and $q_{\lambda} = q + \min(0, \lambda_{\text{min}} (Q)) \mathbbm{1}$. It is simple to check that $Q_{\lambda} \succcurlyeq 0$, and that the objective functions of~\eqref{problem_statement_binary} and~\eqref{reformulation_Hammer_and_Rubin} are equivalent $\forall x \in \{0,1\}^n$. Another reformulation of~\eqref{problem_statement_binary} was proposed by Billionnet et al.~\cite{bep:09}:
\begin{equation}
\label{QCR_da}
\begin{array}{cl}
 \underset{x \in {\cal X}_B}{\text{min}}\; &x^T Q_{d_q, \Theta_q} x + q_{d_q, \Theta_q} ^T x             
\end{array}
\end{equation}
where $Q_{d_q, \Theta_q} = Q + \text{diag} (d_q) + \frac{1}{2} (\Theta_q^T A + A^T \Theta_q)$, $q_{d_q, \Theta_q} = q - d_q - \Theta_q^T b $, $d_q \in \R^n$ and $\Theta_q \in \R^{m \times n}$. The perturbation parameters $d_q$ and $\Theta_q$ are chosen such that $Q_{d_q, \Theta_q} \succcurlyeq 0$ and the bound of the continuous relaxation of~\eqref{QCR_da} is maximized. This is done by solving the SDP~\eqref{SDP_d_a_x}, and setting the entries of $d_q$ and $\Theta_q$ to the optimal values of the dual variables associated with the constraints~\eqref{SDP_d_c4} and~\eqref{SDP_d_a_x_c2}, respectively. Note that the continuous relaxation of~\eqref{QCR_da} provides the same bound as the SDP~\eqref{SDP_d_a_x}.

\section{Spectral relaxations for nonconvex QPs and MIQPs}
\label{spectral_relaxations}

In the following, we present a family of convex quadratic relaxations for problems of the form~(\ref{problem_statement}), and investigate their theoretical properties. Before providing a detailed derivation of these relaxations, we state two results which we will repeatedly use throughout this section. First, we recall that the minimum eigenvalue of a matrix $M$ and the minimum generalized eigenvalue of a pair of matrices $(M,N)$, with $M, N \in \Sy^n, N \succ 0$, can be expressed 
in terms of the Rayleigh quotient as~\cite{gvl96}:
\begin{equation}
    \lambda_{\text{min}} (M) = \underset{x \neq 0}{\text{min}} \;\; \frac{x^T M x}{x^T x} 
    \,\,\text{ and }\,\,
    \lambda_{\text{min}} (M, N) = \underset{x \neq 0}{\text{min}} \;\; \frac{x^T M x}{x^T N x}.
    \label{def_RayleighQuotient}
\end{equation}
Second, we provide a particularly useful formulation for the dual of a certain SDP.

\begin{proposition}\label{prop:SDP_dual}
Consider the following SDP 
\begin{subequations}
\label{prop:sdp}
\begin{align}
\underset{x \in {\cal X}, X}{\emph{\text{min}}}\;\; & \langle \hat{Q}, X \rangle + \hat{q}^T x \label{prop:sdp_obj} \\
  \st\;\;      & X - xx^T \succcurlyeq 0 \label{prop:sdp_c1} \\
			   & \langle \hat{C}_i, X \rangle + \hat{c}_i x + \hat{d}_i  = 0, \; i = 1,\ldots,m_1 \label{prop:sdp_c2} \\
			   & \langle \bar{C}_i, X \rangle + \bar{c}_i x + \bar{d}_i  \leq 0, \; i = 1,\ldots,p_1 \label{prop:sdp_c3}			   
\end{align}
\end{subequations}
for some $\hat{Q}, \hat{C}_i, \bar{C}_i \in \Sy^n, \hat{q}, \hat{c}_i, \bar{c}_i \in \R^n$ and $\hat{d}_i, \bar{d}_i \in \R$. The dual of~\eqref{prop:sdp} is given by
\begin{equation}
\underset{\alpha \in \R^{m_1}, \beta \in \R_{\geq 0}^{p_1} : \hat{Q}_{\alpha, \beta} \succcurlyeq 0}{\emph{\text{max}}} \left\lbrace
\begin{aligned}
\centering
& \underset{x \in {\cal X}}{\emph{\text{min}}}
& & x^T \hat{Q}_{\alpha, \beta} x + {\hat{q}_{\alpha, \beta}}^T x + \hat{d}_{\alpha, \beta}
\end{aligned}
\right\rbrace.
\notag
\end{equation}
\end{proposition}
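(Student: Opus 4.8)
The plan is to derive the stated expression as the (partial) Lagrangian dual of~\eqref{prop:sdp}, in which only the affine constraints~\eqref{prop:sdp_c2} and~\eqref{prop:sdp_c3} are dualized, while the membership $x \in {\cal X}$ and the semidefinite constraint~\eqref{prop:sdp_c1} are retained inside the inner minimization. First I would introduce multipliers $\alpha \in \R^{m_1}$ for the equalities and $\beta \in \R_{\geq 0}^{p_1}$ for the inequalities, and form the Lagrangian
\begin{align}
L(x,X,\alpha,\beta) &= \langle \hat{Q}, X\rangle + \hat{q}^T x \notag \\
&\quad + \sum_{i=1}^{m_1}\alpha_i\big(\langle \hat{C}_i, X\rangle + \hat{c}_i^T x + \hat{d}_i\big) + \sum_{i=1}^{p_1}\beta_i\big(\langle \bar{C}_i, X\rangle + \bar{c}_i^T x + \bar{d}_i\big). \notag
\end{align}
Collecting the terms multiplying $X$, those multiplying $x$, and the constants separately, this rewrites as $L = \langle \hat{Q}_{\alpha,\beta}, X\rangle + \hat{q}_{\alpha,\beta}^T x + \hat{d}_{\alpha,\beta}$, where $\hat{Q}_{\alpha,\beta} = \hat{Q} + \sum_i \alpha_i \hat{C}_i + \sum_i \beta_i \bar{C}_i$, $\hat{q}_{\alpha,\beta} = \hat{q} + \sum_i \alpha_i \hat{c}_i + \sum_i \beta_i \bar{c}_i$, and $\hat{d}_{\alpha,\beta} = \sum_i \alpha_i \hat{d}_i + \sum_i \beta_i \bar{d}_i$. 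The dual function is then $g(\alpha,\beta) = \min_{x\in{\cal X},\, X - xx^T \succcurlyeq 0} L(x,X,\alpha,\beta)$, and the dual problem maximizes $g$ over $\alpha$ and $\beta \geq 0$.

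The crux is to evaluate the inner minimization over $X$ for a fixed $x$. I would substitute $Y = X - xx^T$, so that constraint~\eqref{prop:sdp_c1} becomes simply $Y \succcurlyeq 0$ and the $X$-dependent part of the Lagrangian splits as $\langle \hat{Q}_{\alpha,\beta}, X\rangle = \langle \hat{Q}_{\alpha,\beta}, Y\rangle + x^T \hat{Q}_{\alpha,\beta}\, x$, using $\langle \hat{Q}_{\alpha,\beta}, xx^T\rangle = x^T \hat{Q}_{\alpha,\beta}\, x$. The only term still depending on $X$ (equivalently $Y$) is the linear functional $\langle \hat{Q}_{\alpha,\beta}, Y\rangle$, to be minimized over the semidefinite cone $Y \succcurlyeq 0$. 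This minimum equals $0$, attained at $Y = 0$, precisely when $\hat{Q}_{\alpha,\beta} \succcurlyeq 0$; otherwise, picking an eigenvector $v$ of $\hat{Q}_{\alpha,\beta}$ with negative eigenvalue and setting $Y = t\,v v^T \succcurlyeq 0$ drives $\langle \hat{Q}_{\alpha,\beta}, Y\rangle = t\, v^T \hat{Q}_{\alpha,\beta}\, v$ to $-\infty$ as $t \to \infty$.

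Putting the pieces together, $g(\alpha,\beta) = \min_{x\in{\cal X}} \big( x^T \hat{Q}_{\alpha,\beta}\, x + \hat{q}_{\alpha,\beta}^T x + \hat{d}_{\alpha,\beta}\big)$ when $\hat{Q}_{\alpha,\beta} \succcurlyeq 0$, and $g(\alpha,\beta) = -\infty$ otherwise. Since a dual value of $-\infty$ can never achieve the maximum, the outer maximization may be restricted to the set $\{(\alpha,\beta) : \beta \geq 0,\ \hat{Q}_{\alpha,\beta} \succcurlyeq 0\}$, which yields exactly the stated dual. The main obstacle is the semidefinite-cone minimization in the second step: one must recognize that minimizing the \emph{linear} functional $\langle \hat{Q}_{\alpha,\beta}, Y\rangle$ over $Y \succcurlyeq 0$ simultaneously produces the positive-semidefiniteness requirement on $\hat{Q}_{\alpha,\beta}$ and leaves behind precisely the quadratic form $x^T \hat{Q}_{\alpha,\beta}\, x$ entering the inner problem; the Rayleigh-quotient characterization~\eqref{def_RayleighQuotient} is the clean way to certify the $-\infty$ case. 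No constraint qualification or strong-duality argument is required, since the claim only identifies the form of the Lagrangian dual rather than asserting equality of its optimal value with that of the primal SDP.
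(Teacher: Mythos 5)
Your proposal is correct and follows essentially the same route as the paper: dualize only the affine constraints \eqref{prop:sdp_c2}--\eqref{prop:sdp_c3}, keep $x \in {\cal X}$ and $X - xx^T \succcurlyeq 0$ in the inner problem, and observe that the inner minimization is $-\infty$ unless $\hat{Q}_{\alpha,\beta} \succcurlyeq 0$, in which case the minimizer takes $X = xx^T$. Your substitution $Y = X - xx^T$ and the explicit eigenvector argument for the $-\infty$ case make the key step somewhat more explicit than the paper's brief boundedness remark, but the argument is the same.
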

where $\hat{Q}_{\alpha, \beta} = \hat{Q} + \sum\limits_{i=1}^{m_1} \alpha_i \hat{C}_i + \sum\limits_{i=1}^{p_1} \beta_i \bar{C}_i$,
$\hat{q}_{\alpha, \beta} = \hat{q} + \sum\limits_{i=1}^{m_1} \alpha_i \hat{c}_i + \sum\limits_{i=1}^{p_1} \beta_i \bar{c}_i$, and $\hat{d}_{\alpha, \beta} = \sum\limits_{i=1}^{m_1} \alpha_i \hat{d}_i + \sum\limits_{i=1}^{p_1} \beta_i \bar{d}_i$.

\begin{proof}
By dualizing the constraints~\eqref{prop:sdp_c2} using the multipliers $\alpha_i \in \R$, for $i = 1,\ldots,m_1$, and the constraints  ~\eqref{prop:sdp_c3} using the multipliers $\beta_i \in \R_{\geq 0}$, for $i = 1, \ldots, p_1$, we have that the Lagrangian dual of the SDP~\eqref{prop:sdp} is given by:
\begin{equation}
\underset{\alpha \in \R^{m_1}, \beta \in \R_{\geq 0}^{p_1}}{\text{max}} \left\lbrace
\begin{aligned}
\centering
& \underset{x \in {\cal X}}{\text{min}}
& & \langle \hat{Q}_{\alpha, \beta}, X \rangle + {\hat{q}_{\alpha, \beta}}^T x + \hat{d}_{\alpha, \beta} \\
& \text{s.t.} & & X - xx^T \succcurlyeq 0
\end{aligned}
\right\rbrace
\end{equation}
Since the variables $x$ have finite lower and upper bounds, the set ${\cal X}$ is bounded. Hence, for the inner minimization to be bounded below, we need to choose $\alpha$ and $\beta$ such that $\hat{Q}_{\alpha, \beta} \succcurlyeq 0$. This restriction on $\alpha$ and $\beta$ implies that the optimal solution of the inner minimization problem satisfies $X = xx^T$. The claim follows after substituting $X = xx^T$.
\end{proof}

\subsection{Eigenvalue relaxation}
\label{sec:eigrel}

We start by reformulating~(\ref{problem_statement}) as:
\begin{equation}
\label{EIGR1}
\begin{array}{cl}
 \underset{x \in {\cal X}}{\text{min}}\;\; &x^T Q x + q^T x + \alpha_e \sum\limits_{i = 1}^{n} x_i^2  - \alpha_e \sum\limits_{i = 1}^{n} x_i^2\\
 \st\;\;       & x_i \in \Z, \,\,\, \forall i \in J              
\end{array}
\end{equation}
where $\alpha_e \geq 0$. By dropping the integrality conditions from~(\ref{EIGR1}) and using the concave envelope of $x_i^2$ over $[l_i, u_i]$, we obtain the following relaxation: 
\begin{equation}
\label{EIGR3}
\begin{array}{cl}
\underset{x \in {\cal X}}{\text{min}}\;\; & x^T Q_{\alpha_e} x + q_{\alpha_e}^T x + k_{\alpha_e}
\end{array}
\end{equation}
where $Q_{\alpha_e} = Q + \alpha_e I_n$, $q_{\alpha_e} = q - \alpha_e (l + u)$, and $k_{\alpha_e} = \alpha_e l^T u$.

To ensure that~\eqref{EIGR3} is convex, it suffices to choose $\alpha_e \geq -\min(0, \lambda_{\text{min}} (Q))$, since this renders $Q_{\alpha_e}$ positive semidefinite. Moreover, it is simple to check that $\alpha_e = -\min(0, \lambda_{\text{min}} (Q))$ provides the tightest convex relaxation of the form~\eqref{EIGR3} for which $Q_{\alpha_e} \succcurlyeq 0$. We refer to this relaxation as the \textit{eigenvalue relaxation} of~\eqref{problem_statement}.

Two interesting observations can be made on~\eqref{EIGR3} when $\alpha_e \geq -\min(0, \lambda_{\text{min}} (Q))$. First, the derivation of~\eqref{EIGR3} can be seen as an application of the d.c. programming technique reviewed in \S\ref{sec:d_c_programming_relaxations}, whereby the objective function of~\eqref{problem_statement} is expressed as the difference of the convex quadratic functions $g(x) = x^T Q x + q^T x + \alpha_e \sum_{i = 1}^{n} x_i^2$ and $h(x) = \alpha_e \sum_{i = 1}^{n} x_i^2$. Second, \eqref{EIGR3} is equivalent to the \textalpha BB relaxation discussed in \S\ref{sec:d_c_programming_relaxations} if we set $\alpha_i = \alpha_e, \; \forall i = 1, \dots n$, in~\eqref{alphaBB_relaxation}. 

Note also that if all the variables in~\eqref{problem_statement} are binary and $\alpha_e = -\min(0, \lambda_{\text{min}} (Q))$, then~\eqref{EIGR3} is equivalent to the continuous relaxation of the convex binary quadratic program~\eqref{reformulation_Hammer_and_Rubin}, which was considered by Hammer and Rubin~\cite{hr:70}.

Even though the eigenvalue relaxation is relatively simple to construct, in many cases it can be significantly tighter than the polyhedral relaxations commonly used in state-of-the-art global optimization solvers (see~\S\ref{sec:results_relaxations}). Motivated by this observation, we further investigate the theoretical properties of this relaxation. In particular, we show that the eigenvalue relaxation is equivalent to the following SDP:
\begin{subequations}
\label{SDP_EIGR}
\begin{align}
\underset{x \in {\cal X}, X}{\text{min}}\;\; & \langle Q, X \rangle + q^T x \label{SDP_EIGR_obj} \\
  \st\;\;      & X - x x^T  \succcurlyeq 0 \label{SDP_EIGR_c3} \\
 			   & \langle I_n, X \rangle - {\left( l + u \right)}^T x + l^T u \label{SDP_EIGR_c4} \leq 0 
\end{align}
\end{subequations}

\begin{proposition}
\label{EIGR_SDP_EIGR_equivalence}
Suppose that the matrix $Q$ is indefinite. Denote by $\mu_{\emph{\text{EIG}}}(\alpha_e)$ and $\mu_{\emph{\text{SDP\_EIG}}}$ the optimal objective function values in~\eqref{EIGR3} and~\eqref{SDP_EIGR}, respectively. Then, the following hold:
\begin{enumerate}[(i)]
\item If $\alpha_e \geq -\lambda_{\emph{{\text{min}}}} (Q)$ in~\eqref{EIGR3}, then $\mu_{\emph{\text{EIG}}}(\alpha_e) \leq \mu_{\emph{\text{SDP\_EIG}}}$.
\item If $\alpha_e = -\lambda_{\emph{\text{min}}} (Q)$ in~\eqref{EIGR3}, then $\mu_{\emph{\text{EIG}}}(\alpha_e) = \mu_{\emph{\text{SDP\_EIG}}}$.
\end{enumerate}
\end{proposition}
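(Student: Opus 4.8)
The plan is to use Proposition~\ref{prop:SDP_dual} to put the SDP~\eqref{SDP_EIGR} into dual form and then recognize the eigenvalue relaxation as the inner minimization of that dual. First I would identify~\eqref{SDP_EIGR} as an instance of the template~\eqref{prop:sdp}: take $\hat{Q} = Q$, $\hat{q} = q$, no equality constraints ($m_1 = 0$), and a single inequality ($p_1 = 1$) with $\bar{C}_1 = I_n$, $\bar{c}_1 = -(l+u)$, and $\bar{d}_1 = l^T u$. Proposition~\ref{prop:SDP_dual} then produces a dual whose only multiplier is $\beta \in \R_{\geq 0}$, with $\hat{Q}_\beta = Q + \beta I_n$, $\hat{q}_\beta = q - \beta(l+u)$, and $\hat{d}_\beta = \beta\, l^T u$, which coincide with $Q_{\alpha_e}, q_{\alpha_e}, k_{\alpha_e}$ under $\beta = \alpha_e$. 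The admissibility requirement $\hat{Q}_\beta \succcurlyeq 0$ reads $\beta \geq -\lambda_{\text{min}}(Q)$; since $Q$ is indefinite we have $-\lambda_{\text{min}}(Q) > 0$, so this dominates $\beta \geq 0$ and the dual feasible set is exactly $\{\beta \geq -\lambda_{\text{min}}(Q)\}$. The key observation is that the resulting inner minimization $\min_{x \in {\cal X}} x^T(Q+\beta I_n)x + (q-\beta(l+u))^T x + \beta\, l^T u$ is precisely $\mu_{\text{EIG}}(\beta)$, so the dual of~\eqref{SDP_EIGR} equals $\max_{\beta \geq -\lambda_{\text{min}}(Q)} \mu_{\text{EIG}}(\beta)$.

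Part (i) then follows from weak duality: for any fixed $\alpha_e \geq -\lambda_{\text{min}}(Q)$, the value $\mu_{\text{EIG}}(\alpha_e)$ is the dual objective at the feasible multiplier $\beta = \alpha_e$, so $\mu_{\text{EIG}}(\alpha_e) \leq \max_\beta \mu_{\text{EIG}}(\beta) \leq \mu_{\text{SDP\_EIG}}$. Equivalently, and more self-containedly, one can argue directly: for any $(x,X)$ feasible in~\eqref{SDP_EIGR}, write $S = X - xx^T \succcurlyeq 0$; the bound $\langle Q, S\rangle \geq \lambda_{\text{min}}(Q)\,\mathrm{tr}(S)$ combined with the trace constraint $\mathrm{tr}(S) \leq (l+u)^T x - l^T u - \|x\|^2$ shows that the objective of~\eqref{SDP_EIGR} at $(x,X)$ is at least $\mu_{\text{EIG}}(\alpha_e)$ whenever $\alpha_e \geq -\lambda_{\text{min}}(Q)$, and minimizing over feasible $(x,X)$ gives the claim.

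For part (ii) I set $\alpha_e = -\lambda_{\text{min}}(Q)$ and must establish the reverse inequality $\mu_{\text{SDP\_EIG}} \leq \mu_{\text{EIG}}(\alpha_e)$. Writing $R_\beta(x)$ for the objective of~\eqref{EIGR3} with $\alpha_e = \beta$, the identity $R_{\beta'}(x) - R_\beta(x) = (\beta' - \beta)\sum_i (x_i - l_i)(x_i - u_i)$, valid for $x \in [l,u]$, shows that $\mu_{\text{EIG}}(\beta)$ is nonincreasing in $\beta$, so the dual maximum is attained at the left endpoint $\beta = -\lambda_{\text{min}}(Q)$. Rather than closing the remaining gap by invoking a Slater condition (which is delicate when ${\cal X}$ is lower-dimensional), I would prove the reverse inequality constructively. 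Let $x^*$ be optimal for~\eqref{EIGR3} at $\alpha_e = -\lambda_{\text{min}}(Q)$, let $v$ be a unit eigenvector of $Q$ for $\lambda_{\text{min}}(Q)$, and set $X^* = x^*{x^*}^T - \gamma\, vv^T$ with $\gamma = \sum_i (x_i^* - l_i)(x_i^* - u_i) \leq 0$. Then $X^* - x^*{x^*}^T = -\gamma\, vv^T \succcurlyeq 0$, the constraint~\eqref{SDP_EIGR_c4} holds with equality, and the objective evaluates to $\langle Q, x^*{x^*}^T\rangle + \lambda_{\text{min}}(Q)(-\gamma) + q^T x^* = \mu_{\text{EIG}}(\alpha_e)$. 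This feasible point certifies $\mu_{\text{SDP\_EIG}} \leq \mu_{\text{EIG}}(\alpha_e)$, and combining with part (i) yields equality.

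The main obstacle I anticipate is exactly this reverse inequality in (ii): weak duality and monotonicity alone only bound $\mu_{\text{SDP\_EIG}}$ from below, and closing the gap requires either a strong-duality argument or the explicit rank-one perturbation above. The construction succeeds because all negative curvature of $Q$ is concentrated in the $\lambda_{\text{min}}(Q)$-eigenspace, so placing a multiple of $vv^T$ in the slack $S = X - xx^T$ simultaneously saturates the trace constraint and contributes exactly $\lambda_{\text{min}}(Q)(-\gamma) = \alpha_e \gamma$ to the objective, which is precisely the perturbation bookkeeping encoded in $k_{\alpha_e}$ in~\eqref{EIGR3}.
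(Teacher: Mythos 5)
Your proof is correct, and part (i) follows the paper's own route: you instantiate Proposition~\ref{prop:SDP_dual} with the single inequality $\bar{C}_1 = I_n$, $\bar{c}_1 = -(l+u)$, $\bar{d}_1 = l^Tu$, recognize the inner minimization as~\eqref{EIGR3}, and invoke weak duality (your self-contained variant via $\langle Q, S\rangle \geq \lambda_{\text{min}}(Q)\,\mathrm{tr}(S)$ for $S = X - xx^T \succcurlyeq 0$ is also sound). Where you genuinely diverge is part (ii). The paper closes the gap by verifying Slater's condition for~\eqref{SDP_EIGR}: it constructs a strictly feasible pair $(\bar{x},\bar{X})$ with $\bar{X} - \bar{x}\bar{x}^T \succ 0$, concludes strong duality, and then uses monotonicity of $\mu_{\text{EIG}}(\hat{\alpha}_e)$ to locate the dual optimum at $\hat{\alpha}_e = -\lambda_{\text{min}}(Q)$. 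You instead exhibit an explicit primal feasible point $(x^*, X^*)$ with $X^* = x^*{x^*}^T - \gamma\, vv^T$ attaining the value $\mu_{\text{EIG}}(-\lambda_{\text{min}}(Q))$; I checked the bookkeeping ($\gamma \leq 0$, constraint~\eqref{SDP_EIGR_c4} tight, objective contribution $-\gamma\lambda_{\text{min}}(Q) = \alpha_e\gamma$) and it is right. This is in fact the same rank-one-perturbation device the paper is forced to use later in Proposition~\ref{EIGRNS_SDP_EIGRNS_equivalence}, where Slater fails. Your route buys something real: the paper's Slater argument silently assumes the existence of $\bar{x}$ with $A\bar{x}=b$, $C\bar{x}<d$, $l<\bar{x}<u$, whereas your construction needs only an optimal solution of~\eqref{EIGR3}, so it covers instances where the feasible region has empty relative interior in that sense. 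The price is that your argument is tailored to this particular constraint structure, while the Slater/strong-duality template transfers verbatim to Proposition~\ref{GEIGR_SDP_GEIGR_equivalence}.
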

\begin{proof}

The proof of this proposition relies on strong duality holding for~\eqref{SDP_EIGR}. We start by showing that~\eqref{SDP_EIGR} admits a strictly feasible solution. Let $\bar{x} \in \R^{n}$ be a vector such that $A \bar{x} = b$, $C \bar{x} < d$, and $l < \bar{x} < u$. Recall that the concave envelope of $x_i^2$ over $[l_i, u_i]$ is given by $(l_i + u_i) x_i - l_i u_i$. Since $l_i < \bar{x}_i < u_i, \forall i = 1, \dots, n$, it follows that $(l_i + u_i) \bar{x}_i - l_i u_i - \bar{x}_i^2 > 0, \forall i = 1, \dots, n$. Define $\epsilon := (l_1 + u_1) \bar{x}_1 - l_1 u_1 - \bar{x}_1^2$. Clearly, there exists $\delta \in \R$ such that $0 < \delta < \epsilon$. Let $\bar{X} \in \Sy^{n}$ be the matrix satisfying:
\begin{equation}
\label{SDP_EIGR_dual_1}
\begin{array}{l}
\bar{X}_{11} = (l_1 + u_1) \bar{x}_1 - l_1 u_1 - \delta \\ 
\bar{X}_{ii} = (l_i + u_i) \bar{x}_i - l_i u_i, \; i = 2, \dots , n \\ 
\bar{X}_{ij} = \bar{X}_{ji} = \bar{x}_i \bar{x}_j, \; i = 1, \dots, n, \; j = (i + 1), \dots, n
\end{array}
\end{equation}
It is simple to check that~\eqref{SDP_EIGR_c4} is strictly satisfied by $(\bar{x}, \bar{X})$. Define $\hat{X} := \bar{X} - \bar{x} \bar{x}^T$. Then, it follows that $\hat{X} \in \Sy^n$ is a diagonal matrix with entries given by:
\begin{equation}
\label{SDP_EIGR_dual_2}
\begin{array}{l}
\hat{X}_{11} = (l_1 + u_1) \bar{x}_1 - l_1 u_1 - \bar{x}_1^2 - \delta \\
\hat{X}_{ii} = (l_i + u_i) \bar{x}_i - l_i u_i - \bar{x}_i^2, \; i = 2, \dots , n \\ 
\end{array}
\end{equation}
It is clear that $\hat{X}_{ii} > 0$, $\forall i = 1, \dots, n$. Hence, $\hat{X} \succ 0$ and $(\bar{x}, \bar{X})$ is strictly feasible in~\eqref{SDP_EIGR}. Therefore, Slater's condition is satisfied by~(\ref{SDP_EIGR}), which implies that strong duality holds and the optimal value of the dual problem is attained. 

Now, we consider the dual of~\eqref{SDP_EIGR}. By Proposition~\ref{prop:SDP_dual}, this dual is given by:
\begin{equation}
\label{SDP_EIGR_dual_5}
\underset{\hat{\alpha}_e \in \R_{\geq 0}: Q_{\hat{\alpha}_e} \succcurlyeq 0}{\text{max}} \left\lbrace
\begin{aligned}
\centering
& \underset{x \in {\cal X}}{\text{min}}
& & x^T Q_{\hat{\alpha}_e} x + q_{\hat{\alpha}_e}^T x + k_{\hat{\alpha}_e}
\end{aligned}
\right\rbrace
\end{equation}
where $\hat{\alpha}_e$ is the multiplier for the constraint~\eqref{SDP_EIGR_c4}, $Q_{\hat{\alpha}_e} = Q + \hat{\alpha}_e I_n$, $q_{\hat{\alpha}_e} = q - \hat{\alpha}_e (l + u)$, and $k_{\hat{\alpha}_e} = \hat{\alpha}_e l^T u$. Since $Q$ is indefinite, it is clear that $Q_{\hat{\alpha}_e} \succcurlyeq 0$ for $\hat{\alpha}_e \geq - \lambda_{\text{min}} (Q)$. For any such choice of $\hat{\alpha}_e$, the inner minimization in~\eqref{SDP_EIGR_dual_5} takes the same form as the QP~\eqref{EIGR3}. Hence, the claim in (i) follows by weak duality. Let $\mu_{\text{DSDP\_EIG}}$ be the optimal objective function value in~\eqref{SDP_EIGR_dual_5}. Then, $\mu_{\text{DSDP\_EIG}} = \max\{ \mu_{\text{EIG}}(\hat{\alpha}_e) \,|\, \hat{\alpha}_e \in \R_{\geq 0} : Q_{\hat{\alpha}_e} \succcurlyeq 0\}$. It is easy to verify that $\mu_{\text{EIG}}(\hat{\alpha}_e)$ is monotonically decreasing in $\hat{\alpha}_e$. Hence, $\mu_{\text{DSDP\_EIG}} = \mu_{\text{EIG}}(\hat{\alpha}_e)$ occurs when $\hat{\alpha}_e = -\lambda_{\text{min}} (Q)$.  Combining this with $\mu_{\text{DSDP\_EIG}} = \mu_{\text{SDP\_EIG}}$, which holds by strong duality, proves the claim in (ii).
\end{proof}

\subsection{Generalized eigenvalue relaxation}
\label{sec:geigrel}

In this section, we consider a new quadratic relaxation which improves the bound of the eigenvalue relaxation by using information from the equality constraints. We start by reformulating~(\ref{problem_statement}) as:
\begin{equation}
\label{GEIGR1}
\begin{array}{cl}
 \underset{x \in {\cal X}}{\text{min}}\;\; &x^T Q x + q^T x + \alpha_g \sum\limits_{i = 1}^{n} x_i^2  - \alpha_g \sum\limits_{i = 1}^{n} x_i^2 + \alpha_g \|Ax - b\|^2 \\
 \st\;\;       & x_i \in \Z, \,\,\, \forall i \in J              
\end{array}
\end{equation}
where $\alpha_g \geq 0$. By dropping the integrality conditions from~(\ref{GEIGR1}) and using the concave envelope of $x_i^2$ over $[l_i, u_i]$, we obtain the following quadratic relaxation:
\begin{equation}
\label{GEIGR3}
\begin{array}{cl}
\underset{x \in {\cal X}}{\text{min}}\;\; & x^T Q_{\alpha_g} x + q_{\alpha_g}^T x + k_{\alpha_g} 
\end{array}
\end{equation}
where $Q_{\alpha_g} = Q + \alpha_g (I_n + A^T A )$, $q_{\alpha_g} = q - \alpha_g ( l + u + 2 A^T b )$, and $k_{\alpha_g} = \alpha_g ( l^T u + b^T b )$. 

In the following proposition, we provide a condition for choosing $\alpha_g$ which ensures that the above problem is a convex relaxation of~\eqref{problem_statement}.
\begin{proposition}
\label{GEIGR_convexity}
Let $\alpha_g  \geq -\min(0, \lambda_{\emph{\text{min}}} (Q, I_n + A^T A))$ in~\eqref{GEIGR3}. Then,~\eqref{GEIGR3} is a convex quadratic program.
\end{proposition}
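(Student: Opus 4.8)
The plan is to reduce the proposition to a single matrix inequality. A quadratic function is convex on $\R^n$ exactly when its constant Hessian is positive semidefinite, and the feasible set ${\cal X}$ is polyhedral, hence convex; so \eqref{GEIGR3} is a convex quadratic program if and only if its Hessian $Q_{\alpha_g} = Q + \alpha_g (I_n + A^T A)$ satisfies $Q_{\alpha_g} \succcurlyeq 0$. The entire content of the claim is therefore to establish this positive semidefiniteness under the stated lower bound on $\alpha_g$.

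First I would record that $N := I_n + A^T A \succ 0$: indeed $A^T A \succcurlyeq 0$ and $I_n \succ 0$, so $x^T N x = \|x\|^2 + \|Ax\|^2 > 0$ for every $x \neq 0$. This is the crucial structural fact, because it guarantees that the generalized eigenvalue problem $Q v = \lambda N v$ is well posed and that $\lambda_{\text{min}}(Q, I_n + A^T A)$ is precisely the quantity characterized by the Rayleigh-quotient identity \eqref{def_RayleighQuotient}.

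Next I would apply that identity. From $\lambda_{\text{min}}(Q, N) = \min_{x \neq 0} \tfrac{x^T Q x}{x^T N x}$ we obtain $x^T Q x \geq \lambda_{\text{min}}(Q, N)\, x^T N x$ for all $x$ (the inequality is trivial at $x = 0$). Adding $\alpha_g\, x^T N x$ to both sides gives, for every $x$,
\[
x^T Q_{\alpha_g} x = x^T Q x + \alpha_g\, x^T N x \geq \big(\lambda_{\text{min}}(Q, N) + \alpha_g\big)\, x^T N x .
\]
Since $N \succ 0$, the factor $x^T N x$ is nonnegative, so the right-hand side is nonnegative as soon as $\lambda_{\text{min}}(Q, N) + \alpha_g \geq 0$. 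Finally I would close the gap with the hypothesis: because $\min(0, t) \leq t$ for all $t$, the assumption $\alpha_g \geq -\min(0, \lambda_{\text{min}}(Q, I_n + A^T A))$ implies $\alpha_g \geq -\lambda_{\text{min}}(Q, N)$, i.e. $\lambda_{\text{min}}(Q, N) + \alpha_g \geq 0$. Hence $x^T Q_{\alpha_g} x \geq 0$ for all $x$, which yields $Q_{\alpha_g} \succcurlyeq 0$ and establishes convexity.

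I do not expect a genuine obstacle here; the argument is short. The one point that deserves care is the positive definiteness of $N = I_n + A^T A$, since this is exactly what makes the generalized eigenvalue characterization \eqref{def_RayleighQuotient} applicable and lets me multiply through by $x^T N x$ without worrying about its sign or possible degeneracy. The $\min(0, \cdot)$ appearing in the hypothesis is harmless slack that is only active when $Q$ is already positive semidefinite relative to $N$; in that regime any $\alpha_g \geq 0$ suffices, consistent with the displayed inequality.
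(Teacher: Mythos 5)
Your proof is correct and follows essentially the same route as the paper: both reduce the claim to $Q_{\alpha_g} \succcurlyeq 0$, invoke the Rayleigh-quotient characterization \eqref{def_RayleighQuotient} of $\lambda_{\text{min}}(Q, I_n + A^T A)$, and multiply through by $x^T (I_n + A^T A) x > 0$ to conclude. You merely spell out a couple of steps the paper leaves implicit (the positive definiteness of $I_n + A^T A$ and the role of the $\min(0,\cdot)$ in the hypothesis), which is fine.
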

\begin{proof}

To establish the convexity of~\eqref{GEIGR3}, it suffices to verify that $Q_{\alpha_g} = Q + \alpha_g (I_n + A^T A )$ is positive semidefinite. From the definition of the Rayleigh quotient for the generalized eigenvalue pair $(Q,I_n + A^TA)$ in~\eqref{def_RayleighQuotient} we obtain: 
\begin{equation}
\label{GEIGR_convexity_2}
\begin{aligned}
\lambda_{\text{min}} (Q, I_n + A^T A) \leq \frac{x^T Q x}{x^T \left( I_n + A^T A \right) x}, \,\,\, \forall x \neq 0
\end{aligned}
\end{equation}
which using the positive definiteness of $(I_n + A^TA)$ can be equivalently written as:
\begin{equation}
\label{GEIGR_convexity_3}
\begin{aligned}
x^T Q_{\alpha_g} x \geq \left(\alpha_g + \lambda_{\text{min}} (Q, I_n + A^T A) \right) x^T \left( I_n + A^T A \right) x, \,\,\, \forall x \neq 0.
\end{aligned}
\end{equation}
It is readily verified that $Q_{\alpha_g} \succcurlyeq 0$ for $\alpha_g \geq -\min(0,\lambda_{\text{min}} (Q, I_n + A^T A))$. 
\end{proof}

From Proposition~\ref{GEIGR_convexity}, it follows that $\alpha_g = -\min(0, \lambda_{\text{min}} (Q, I_n + A^T A))$ provides the tightest convex relaxation of the form~\eqref{GEIGR3} for which $Q_{\alpha_g} \succcurlyeq 0$. We refer to this convex relaxation as the \textit{generalized eigenvalue relaxation} of~(\ref{problem_statement}). Next, we show that this relaxation is at least as tight as the eigenvalue relaxation.
\begin{proposition}
\label{EIGR_GEIGR_comparison}
Suppose that $\alpha_e = -\min(0, \lambda_{\emph{\text{min}}} (Q))$ in~\eqref{EIGR3} and assume that $\alpha_g = -\min(0, \lambda_{\emph{\text{min}}} (Q, I_n + A^T A))$ in~\eqref{GEIGR3}. Denote by $\mu_{\emph{\text{EIG}}}$ and $\mu_{\emph{\text{GEIG}}}$ the optimal objective function values in~(\ref{EIGR3}) and~(\ref{GEIGR3}), respectively. Then, $\mu_{\emph{\text{GEIG}}} \geq \mu_{\emph{\text{EIG}}}$.

\end{proposition}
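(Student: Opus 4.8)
The plan is to compare the two relaxations pointwise on the feasible set ${\cal X}$ and exploit the fact that, once restricted to ${\cal X}$, the two objectives differ only by a nonpositive term. First I would rewrite both objectives in a common form. Setting $g(x) := \sum_{i=1}^n (x_i - l_i)(x_i - u_i) = \sum_{i=1}^n \bigl(x_i^2 - (l_i+u_i)x_i + l_i u_i\bigr)$, a direct expansion of $Q_{\alpha_e}, q_{\alpha_e}, k_{\alpha_e}$ shows that the objective of \eqref{EIGR3} equals $x^T Q x + q^T x + \alpha_e\, g(x)$, while expanding $Q_{\alpha_g}, q_{\alpha_g}, k_{\alpha_g}$ shows that the objective of \eqref{GEIGR3} equals $x^T Q x + q^T x + \alpha_g\, g(x) + \alpha_g\,\|Ax - b\|^2$. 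Since every $x \in {\cal X}$ satisfies $Ax = b$, the last term vanishes on ${\cal X}$, so the two objectives differ there exactly by $(\alpha_g - \alpha_e)\,g(x)$.

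The result then reduces to two sign facts. Because $l_i \le x_i \le u_i$ for all $x \in {\cal X}$, each product $(x_i - l_i)(x_i - u_i) \le 0$, hence $g(x) \le 0$ on ${\cal X}$. If in addition $\alpha_g \le \alpha_e$, then $(\alpha_g - \alpha_e)\,g(x) \ge 0$ for every $x \in {\cal X}$, so the \eqref{GEIGR3} objective dominates the \eqref{EIGR3} objective at every feasible point; taking the minimum over ${\cal X}$ yields $\mu_{\text{GEIG}} \ge \mu_{\text{EIG}}$. Thus the only substantive step is establishing $\alpha_g \le \alpha_e$.

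For this I would use the monotonicity of the positive-semidefinite cone rather than a direct eigenvalue estimate. By definition $\alpha_e = -\min(0, \lambda_{\text{min}}(Q))$ is the smallest nonnegative scalar with $Q + \alpha_e I_n \succcurlyeq 0$, and $\alpha_g = -\min(0, \lambda_{\text{min}}(Q, I_n + A^T A))$ is the smallest nonnegative scalar with $Q + \alpha_g (I_n + A^T A) \succcurlyeq 0$. Since $A^T A \succcurlyeq 0$, adding $\alpha_e A^T A$ to the positive semidefinite matrix $Q + \alpha_e I_n$ preserves positive semidefiniteness, so $Q + \alpha_e(I_n + A^T A) \succcurlyeq 0$; that is, $\alpha_e$ is feasible for the defining condition of $\alpha_g$. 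As $\alpha_g$ is the minimal such value, $\alpha_g \le \alpha_e$. Equivalently, one can argue through the Rayleigh-quotient identities \eqref{def_RayleighQuotient} that $\lambda_{\text{min}}(Q, I_n + A^T A) \ge \lambda_{\text{min}}(Q)$ whenever the former is negative, using $x^T(I_n + A^T A)x \ge x^T x$; the monotonicity argument is cleaner because it sidesteps the case analysis. The main thing to get right is the direction of the inequalities---that $g \le 0$ and $\alpha_g - \alpha_e \le 0$ combine to a nonnegative correction---so I anticipate the sign bookkeeping, rather than any deep fact, to be where care is needed.
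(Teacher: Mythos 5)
Your proof is correct, and it follows the same overall skeleton as the paper's --- reduce everything to the single inequality $\alpha_g \leq \alpha_e$ --- but it differs in two worthwhile ways. First, you make explicit why that inequality suffices: you rewrite both objectives as $x^TQx + q^Tx + \alpha\, g(x)$ plus a term vanishing on ${\cal X}$, with $g(x) = \sum_i (x_i-l_i)(x_i-u_i) \leq 0$ on the box, so a smaller $\alpha$ gives a pointwise-larger objective. The paper simply asserts ``it suffices to show $\alpha_g \leq \alpha_e$'' without this bookkeeping, so your version is more self-contained. Second, for the key step $\alpha_g \leq \alpha_e$ the paper argues via the Rayleigh quotient \eqref{def_RayleighQuotient}, splitting into the cases $\lambda_{\text{min}}(Q) \geq 0$ and $\lambda_{\text{min}}(Q) < 0$ and restricting the quotient comparison to the set where $x^TQx < 0$ (where the inequality $x^TQx / x^T(I_n+A^TA)x \geq x^TQx/x^Tx$ actually points the right way). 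You instead characterize $\alpha_e$ and $\alpha_g$ as the minimal nonnegative shifts making $Q+\alpha I_n$ and $Q+\alpha(I_n+A^TA)$ positive semidefinite, and observe that $Q+\alpha_e(I_n+A^TA) = (Q+\alpha_e I_n) + \alpha_e A^TA \succcurlyeq 0$, so $\alpha_e$ is feasible for the problem defining $\alpha_g$. This sidesteps the case analysis and the sign-sensitive quotient manipulation entirely; the trade-off is that it leans on the equivalence between $-\min(0,\lambda_{\text{min}}(Q,N))$ and the minimal PSD-restoring shift for the generalized pair, which is itself a one-line Rayleigh-quotient fact but should be stated. Both routes are sound; yours is arguably the cleaner of the two.
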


\begin{proof}

To prove that $\mu_{\text{GEIG}} \geq \mu_{\text{EIG}}$, it suffices to show that $\alpha_g \leq \alpha_e$. We will use the definition of the Rayleigh quotient in~\eqref{def_RayleighQuotient}. We consider the following cases:

\begin{enumerate}[(a)]

\item $\lambda_{\text{min}} (Q) \geq 0$. This implies that $x^T Q x \geq 0, \; \forall x \in \R^n \setminus\{0\}$. Moreover, it is clear that $x^T(I_n + A^T A) x > 0, \; \forall x \in \R^n \setminus\{0\}$. Then, from~\eqref{def_RayleighQuotient} it follows that $\lambda_{\text{min}} (Q, I_n + A^T A) \geq 0$. Hence, $\alpha_e = \alpha_g = 0$, which implies that $\mu_{\text{GEIG}} = \mu_{\text{EIG}}$.

\item $\lambda_{\text{min}} (Q) < 0$. This implies that $\exists x \in \R^n$ such that $x^T Q x < 0$. From~\eqref{def_RayleighQuotient}, it follows that $\lambda_{\text{min}} (Q, I_n + A^T A) < 0$. Then, it is clear that $\alpha_e = -\lambda_{\text{min}} (Q)$ and $\alpha_g = -\lambda_{\text{min}} (Q, I_n + A^T A)$. Define the set $D = \{ x \in \R^n : x \neq 0, x^T Q x < 0 \}$. Clearly, $D$ is nonempty. 
It is easy to verify that the minimum in~\eqref{def_RayleighQuotient} occurs for $x \in D$. Combining this observation with $x^T(I + A^TA)x \geq x^Tx$, we obtain
\begin{equation}
\label{EIGR_GEIGR_comparison_3}
\begin{array}{cc}
\dfrac{x^T Q x}{x^T \left( I_n + A^T A \right) x} \geq \dfrac{x^T Q x}{x^T x}, \;\;\; \forall x \in D.
\end{array}
\end{equation}
Hence, $\lambda_{\text{min}} (Q, I_n + A^T A) \geq \lambda_{\text{min}} (Q)$, which implies that $\alpha_g \leq \alpha_e$, and $\mu_{\text{GEIG}} \geq \mu_{\text{EIG}}$.
\end{enumerate}
\end{proof}

The idea of using information from the equality constraints to convexify the objective function of~\eqref{problem_statement} has been exploited before in the context of the QCR methods discussed in \S\ref{sec:qcr_based_relaxations}. Even though our approach also uses information from the equality constraints, it differs from the QCR techniques considered in~\cite{bel:13,bep:09} in three ways. First, we do not seek the development of a reformulation of the original problem but instead the construction of cheap quadratic relaxations which can be incorporated in a branch-and-bound framework. Second, under our approach, at a given node of the branch-and-bound tree, we update the perturbation parameters used to construct these quadratic relaxations. This is done by solving the eigenvalue or generalized eigenvalue problems involving the submatrices of $Q$ and $I_n + A^T A$ obtained after eliminating the rows and columns corresponding to the variables that have been fixed. This update results in tighter bounds, and as shown in \S\ref{sec:results_baron}--\ref{sec:results_comparison_with_qcr}, it can have a significant impact on the performance of branch-and-bound algorithms, especially in the binary case, in which our relaxations can be used in conjunction with the branching strategy introduced in \S\ref{spectral_branching}. By contrast, in the QCR methods, the perturbation parameters used to convexify the problem are calculated only once, prior to the initialization of the branch-and-bound tree, and are not updated during the execution of the branch-and-bound algorithm. Third, in our method, the perturbation parameters can be obtained by solving an eigenvalue or generalized eigenvalue problem, which is often inexpensive. Under the QCR approaches, calculating the perturbation parameters involves the solution of an SDP, which is more computationally expensive.

Note also that, unlike our approach, the separable and d.c. programming techniques described in \S\ref{sec:separable_programming_relaxation} and \S\ref{sec:d_c_programming_relaxations} do not use information from the equality constraints to improve the bound of the resulting relaxations.

We next show that the generalized eigenvalue relaxation is equivalent to the following SDP:
\begin{subequations}
\label{SDP_GEIGR}
\begin{align}
\underset{x \in {\cal X}, X}{\text{min}}\;\; & \langle Q, X \rangle + q^T x \label{SDP_GEIGR_obj} \\
\st\;\;     & X - x x^T \succcurlyeq 0 \label{SDP_GEIGR_c3} \\
 			& \langle I_n, X \rangle - {\left( l + u \right)}^T x + l^T u + \langle A^T A, X \rangle - {\left( 2 A^T b \right)}^T x +  b^T b \leq 0 \label{SDP_GEIGR_c4}
\end{align}
\end{subequations}

\begin{proposition}
\label{GEIGR_SDP_GEIGR_equivalence}
Suppose that the matrix $Q$ is indefinite. Denote by $\mu_{\emph{\text{GEIG}}}(\alpha_g)$ and $\mu_{\emph{\text{SDP\_GEIG}}}$ the optimal objective function values in~(\ref{GEIGR3}) and~(\ref{SDP_GEIGR}), respectively. Then, the following hold:
\begin{enumerate}[(i)]
\item If $\alpha_g \geq -\lambda_{\emph{\text{min}}} (Q, I_n + A^T A)$ in~\eqref{GEIGR3}, then $\mu_{\emph{\text{GEIG}}}(\alpha_g) \leq \mu_{\emph{\text{SDP\_GEIG}}}$.
\item If $\alpha_g = -\lambda_{\emph{\text{min}}} (Q, I_n + A^T A)$ in~\eqref{GEIGR3}, then $\mu_{\emph{\text{GEIG}}}(\alpha_g) = \mu_{\emph{\text{SDP\_GEIG}}}$.
\end{enumerate}
\end{proposition}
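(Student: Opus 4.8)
The plan is to mirror the proof of Proposition~\ref{EIGR_SDP_EIGR_equivalence} almost verbatim, since~\eqref{SDP_GEIGR} differs from~\eqref{SDP_EIGR} only in that its single inequality constraint~\eqref{SDP_GEIGR_c4} carries the additional block $\langle A^T A, X\rangle - (2A^T b)^T x + b^T b$. First I would establish that strong duality holds for~\eqref{SDP_GEIGR} by exhibiting a Slater point; then invoke Proposition~\ref{prop:SDP_dual} to write its dual as a one-parameter family of convex QPs; and finally read off~(i) from weak duality and~(ii) from a monotonicity argument in the dual multiplier.

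The one genuinely new step, and the step I expect to be the main obstacle, is the construction of a strictly feasible $(\bar{x}, \bar{X})$. The construction used for~\eqref{SDP_EIGR}, which pushes every diagonal entry of $\bar{X}$ to the concave-envelope boundary, does not transfer directly: for a point $\bar{x}$ with $A\bar{x} = b$ and $\hat{X} := \bar{X} - \bar{x}\bar{x}^T$ diagonal, the left-hand side of~\eqref{SDP_GEIGR_c4} evaluates to
\begin{equation}
-\sum_{i=1}^n \big[(l_i+u_i)\bar{x}_i - l_iu_i - \bar{x}_i^2\big] + \langle I_n + A^T A, \hat{X}\rangle,
\notag
\end{equation}
because $\|A\bar{x} - b\|^2 = 0$. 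With the envelope-boundary choice the second term exceeds the first, so~\eqref{SDP_GEIGR_c4} fails to be strict. The fix is to keep $\hat{X}$ small: taking $\bar{x}$ with $A\bar{x} = b$, $C\bar{x} < d$, $l < \bar{x} < u$, and setting $\bar{X} = \bar{x}\bar{x}^T + \eta I_n$ for a sufficiently small $\eta > 0$ (so that $\eta\,\langle I_n + A^T A, I_n\rangle < \sum_i[(l_i+u_i)\bar{x}_i - l_iu_i - \bar{x}_i^2]$) makes $\hat{X} = \eta I_n \succ 0$ and renders~\eqref{SDP_GEIGR_c4} strictly negative. This yields Slater's condition, hence strong duality with attainment of the dual optimum.

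With strong duality in hand, I would apply Proposition~\ref{prop:SDP_dual} with $\hat{Q} = Q$, $\hat{q} = q$, no equality constraints ($m_1 = 0$), and the single inequality~\eqref{SDP_GEIGR_c4} as the one constraint of type~\eqref{prop:sdp_c3} ($p_1 = 1$, with $\bar{C}_1 = I_n + A^T A$, $\bar{c}_1 = -(l+u+2A^Tb)$, $\bar{d}_1 = l^Tu + b^Tb$). The dual becomes $\max_{\hat{\alpha}_g \geq 0 : Q_{\hat{\alpha}_g}\succcurlyeq 0}\{\min_{x\in{\cal X}}\, x^T Q_{\hat{\alpha}_g}x + q_{\hat{\alpha}_g}^T x + k_{\hat{\alpha}_g}\}$, with $Q_{\hat{\alpha}_g}, q_{\hat{\alpha}_g}, k_{\hat{\alpha}_g}$ exactly as in~\eqref{GEIGR3}, so the inner minimization is precisely $\mu_{\text{GEIG}}(\hat{\alpha}_g)$. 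For part~(i), Proposition~\ref{GEIGR_convexity} guarantees that any $\alpha_g \geq -\lambda_{\text{min}}(Q, I_n + A^T A)$ is dual feasible, and weak duality gives $\mu_{\text{GEIG}}(\alpha_g)\leq \mu_{\text{SDP\_GEIG}}$.

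For part~(ii), I would show $\mu_{\text{GEIG}}(\hat{\alpha}_g)$ is nonincreasing in $\hat{\alpha}_g$ on the feasible range: for fixed $x\in{\cal X}$ the coefficient of $\hat{\alpha}_g$ in the inner objective is $\sum_i (x_i - l_i)(x_i - u_i) + \|Ax - b\|^2$, which is $\leq 0$ since $l\leq x\leq u$ and $Ax = b$ on ${\cal X}$, and taking the minimum over $x$ preserves monotonicity. Because $Q$ is indefinite, Proposition~\ref{GEIGR_convexity} makes the dual-feasible set exactly $[-\lambda_{\text{min}}(Q, I_n+A^TA), \infty)$, so the nonincreasing dual objective is maximized at the left endpoint $\hat{\alpha}_g = -\lambda_{\text{min}}(Q, I_n+A^TA)$. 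Denoting this dual optimum $\mu_{\text{DSDP\_GEIG}}$, strong duality gives $\mu_{\text{DSDP\_GEIG}} = \mu_{\text{SDP\_GEIG}}$, while the monotonicity identifies it with $\mu_{\text{GEIG}}(-\lambda_{\text{min}}(Q,I_n+A^TA))$, establishing~(ii).
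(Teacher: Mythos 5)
Your proof is correct and follows essentially the same route as the paper: establish Slater's condition for~\eqref{SDP_GEIGR}, invoke Proposition~\ref{prop:SDP_dual} to identify the dual with the one-parameter family~\eqref{GEIGR3}, and conclude (i) by weak duality and (ii) by monotonicity of the dual objective in the multiplier together with strong duality. The only difference is in the choice of Slater point: where you take $\bar{X} = \bar{x}\bar{x}^T + \eta I_n$ with $\eta$ small enough that $\eta\,\langle I_n + A^TA, I_n\rangle$ stays below the total envelope gap, the paper instead rescales each diagonal envelope gap by $1/(1+\Phi_{ii})$ with $\Phi = A^TA$ so that~\eqref{SDP_GEIGR_c4} evaluates to exactly $-\delta$; both constructions are valid, and your diagnosis of why the construction from Proposition~\ref{EIGR_SDP_EIGR_equivalence} does not transfer verbatim is accurate.
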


\begin{proof}
We will rely on strong duality holding for~\eqref{SDP_GEIGR} and follow the same line of arguments used in the proof of Proposition of~\ref{EIGR_SDP_EIGR_equivalence}. We start by showing that~\eqref{SDP_GEIGR} admits a strictly feasible solution. Let $\bar{x} \in \R^{n}$ be a vector such that $A \bar{x} = b$, $C \bar{x} < d$, and $l < \bar{x} < u$. Recall that the concave envelope of $x_i^2$ over $[l_i, u_i]$ is given by $(l_i + u_i) x_i - l_i u_i$. Since $l_i < \bar{x}_i < u_i, \forall i = 1, \dots, n$, it follows that $(l_i + u_i) \bar{x}_i - l_i u_i - \bar{x}_i^2 > 0, \forall i = 1, \dots, n$. Define $\epsilon := (l_1 + u_1) \bar{x}_1 - l_1 u_1 - \bar{x}_1^2$. Clearly, there exists $\delta \in \R$ such that $0 < \delta < \epsilon$. Let $\bar{X} \in \Sy^{n}$ be the matrix satisfying:
\begin{equation}
\label{SDP_GEIGR_dual_1}
\begin{array}{l}
\bar{X}_{11} = \dfrac{(l_1 + u_1) \bar{x}_1 - l_1 u_1 + \Phi_{11} \bar{x}_1^2 - \delta}{1 + \Phi_{11}}\\ 
\bar{X}_{ii} = \dfrac{(l_i + u_i) \bar{x}_i - l_i u_i + \Phi_{ii} \bar{x}_i^2}{1 + \Phi_{ii}} , \; i = 2, \dots , n, \\ 
\bar{X}_{ij} = \bar{X}_{ji} = \bar{x}_i \bar{x}_j, \; i = 1, \dots, n, \; j = i + 1, \dots, n
\end{array}
\end{equation}
where $\Phi_{ii}$ is the $(i,i)$-th entry of $A^T A$. It is simple to check that~\eqref{SDP_GEIGR_c4} is strictly satisfied by $(\bar{x}, \bar{X})$. Define $\hat{X} := \bar{X} - \bar{x} \bar{x}^T$. It is clear that $\hat{X}$ is diagonal with entries:
\begin{equation}
\label{SDP_GEIGR_dual_2}
\begin{array}{l}
\hat{X}_{11} = \dfrac{(l_1 + u_1) \bar{x}_1 - l_1 u_1 - \bar{x}_1^2 - \delta}{1 + \Phi_{11}} \\ 
\hat{X}_{ii} = \dfrac{(l_i + u_i) \bar{x}_i - l_i u_i - \bar{x}_i^2}{1 + \Phi_{ii}}, \; i = 2, \dots , n, \\ 
\end{array}
\end{equation}
Clearly, $\hat{X}_{ii} > 0$, $\forall i = 1, \dots, n$. Hence, $\hat{X} \succ$ and $(\bar{x}, \bar{X})$ is a strictly feasible solution to~\eqref{SDP_GEIGR}. Therefore, Slater's condition is satisfied by~(\ref{SDP_GEIGR}), which implies that strong duality holds and the optimal value of the dual problem is attained.

Now, we consider the dual of~\eqref{SDP_GEIGR}. By Proposition~\ref{prop:SDP_dual}, this dual is given by:
\begin{equation}
\label{SDP_GEIGR_dual_5}
\underset{\hat{\alpha}_g \in \R_{\geq 0}: Q_{\hat{\alpha}_g} \succcurlyeq 0}{\text{max}} \left\lbrace
\begin{aligned}
\centering
& \underset{x \in {\cal X}}{\text{min}}
& & x^T Q_{\hat{\alpha}_g} x + q_{\hat{\alpha}_g}^T x + k_{\hat{\alpha}_g} 
\end{aligned}
\right\rbrace
\end{equation}
where $\hat{\alpha}_g$ is the multiplier for the constraint~\eqref{SDP_GEIGR_c4}, $Q_{\hat{\alpha}_g} = Q + \hat{\alpha}_g (I_n + A^T A )$, $q_{\hat{\alpha}_g} = q - \hat{\alpha}_g ( l + u + 2 A^T b )$, and $k_{\hat{\alpha}_g} = \hat{\alpha}_g ( l^T u + b^T b )$. Since $Q$ is indefinite, Proposition~\ref{GEIGR_convexity} implies that $Q_{\hat{\alpha}_g} \succcurlyeq 0$ for $\hat{\alpha}_g \geq - \lambda_{\text{min}} (Q, I_n + A^T A)$. For any such choice of $\hat{\alpha}_g$, the inner minimization in~\eqref{SDP_GEIGR_dual_5} takes the same form as the QP~\eqref{GEIGR3}. Hence, the claim in (i) follows by weak duality. Let $\mu_{\text{DSDP\_GEIG}}$ be the optimal objective function value in~\eqref{SDP_GEIGR_dual_5}. Then, $\mu_{\text{DSDP\_GEIG}} = \max\{ \mu_{\text{GEIG}}(\hat{\alpha}_g) \,|\, \hat{\alpha}_g \in \R_{\geq 0}: Q_{\hat{\alpha}_g} \succcurlyeq 0\}$.  Clearly, $\mu_{\text{GEIG}}(\hat{\alpha}_g)$ is monotonically decreasing in $\hat{\alpha}_g$. Hence, $\mu_{\text{DSDP\_GEIG}} = \mu_{\text{GEIG}}(\hat{\alpha}_g)$ occurs when $\hat{\alpha}_g = -\lambda_{\text{min}} (Q, I_n + A^T A)$.  Combining this with $\mu_{\text{DSDP\_GEIG}} = \mu_{\text{SDP\_GEIG}}$, which holds by strong duality, proves the claim in (ii).
\end{proof}

\subsection{Eigenvalue relaxation in the nullspace of the equality constraints}
\label{sec:eigrel_ns}

In this section, we consider another convex quadratic relaxation of~\eqref{problem_statement} which also incorporates information from the equality constraints in order to convexify the objective function. This relaxation can be formulated as:
\begin{equation}
\label{EIGRNS1}
\begin{array}{cl}
\underset{x \in {\cal X}}{\text{min}}\;\; & x^T Q_{\alpha_{z}} x + q_{\alpha_{z}}^T x + k_{\alpha_{z}} 
\end{array}
\end{equation}
where $Q_{\alpha_{z}} = Q + \alpha_{z} I_n$, $q_{\alpha_{z}} = q - \alpha_{z} (l + u)$, $k_{\alpha_{z}} = \alpha_{z} l^T u$, and $\alpha_z \geq 0$. As discussed in \S\ref{sec:eigrel}, we must select a suitable $\alpha_z$ to ensure that~\eqref{EIGRNS1} is a convex relaxation of~\eqref{problem_statement}. As indicated previously, one such $\alpha_z$ can be determined by using the smallest eigenvalue of the matrix $Q$. However, as we show in the next proposition, there exists another method for constructing such $\alpha_z$ which makes use of the nullspace of $A$. 

\begin{proposition}
\label{EIGRNS_convexity}
Denote by $Z$ an orthonormal basis for the nullspace of the matrix $A$. Let $\alpha_z  \geq -\min(0, \lambda_{\emph{\text{min}}} (Z^T Q Z))$ in~\eqref{EIGRNS1}. Then,~\eqref{EIGRNS1} is a convex quadratic program when restricted to the nullspace of the matrix $A$.
\end{proposition}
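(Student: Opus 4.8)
The plan is to reduce the asserted convexity to a positive-semidefiniteness condition on a reduced Hessian, and then apply the Rayleigh-quotient characterization~\eqref{def_RayleighQuotient} in the same spirit as the proof of Proposition~\ref{GEIGR_convexity}. First I would note that the objective of~\eqref{EIGRNS1} is quadratic with Hessian $2 Q_{\alpha_z} = 2(Q + \alpha_z I_n)$, and that ``convexity when restricted to the nullspace of $A$'' means exactly that this quadratic is convex along every feasible direction. Since every point of ${\cal X}$ satisfies $A x = b$, any two feasible points differ by a vector $d$ with $A d = 0$; hence convexity of the objective over the affine feasible set is equivalent to the condition $d^T Q_{\alpha_z} d \geq 0$ for all $d$ in the nullspace of $A$. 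Crucially, this is a statement about the Hessian restricted to the parallel linear subspace, not about the full matrix $Q_{\alpha_z}$, which may remain indefinite.

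Next I would use that $Z$ is an \emph{orthonormal} basis for the nullspace of $A$ to parametrize these directions as $d = Z w$ with $w \in \R^k$, where $k$ is the dimension of the nullspace. Substituting yields
\begin{equation}
d^T Q_{\alpha_z} d = w^T Z^T (Q + \alpha_z I_n) Z w = w^T \left( Z^T Q Z + \alpha_z Z^T Z \right) w .
\notag
\end{equation}
The key step is that orthonormality collapses $Z^T Z$ to the identity $I_k$, so the uniform shift $\alpha_z I_n$ in the full space acts as the same uniform shift $\alpha_z I_k$ in the reduced coordinates. Thus $d^T Q_{\alpha_z} d = w^T (Z^T Q Z + \alpha_z I_k) w$, and the question reduces to an ordinary (rather than generalized) eigenvalue condition on $Z^T Q Z \in \Sy^k$.

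Finally I would invoke~\eqref{def_RayleighQuotient} applied to $Z^T Q Z$: for all $w \neq 0$,
\begin{equation}
\lambda_{\text{min}}(Z^T Q Z) \leq \frac{w^T Z^T Q Z\, w}{w^T w} ,
\notag
\end{equation}
which rearranges to $w^T (Z^T Q Z + \alpha_z I_k) w \geq \left(\alpha_z + \lambda_{\text{min}}(Z^T Q Z)\right) w^T w$. Hence $Z^T Q Z + \alpha_z I_k \succcurlyeq 0$ whenever $\alpha_z \geq -\lambda_{\text{min}}(Z^T Q Z)$. To close the argument I would split on the sign of $\lambda_{\text{min}}(Z^T Q Z)$: if it is nonnegative, any $\alpha_z \geq 0$ already suffices, whereas if it is negative the bound $\alpha_z \geq -\lambda_{\text{min}}(Z^T Q Z)$ is required; both cases are captured by the hypothesis $\alpha_z \geq -\min(0, \lambda_{\text{min}}(Z^T Q Z))$, giving the claim.

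There is no substantial obstacle in this argument; the only points that require care are conceptual rather than computational. The first is the correct reading of convexity on an affine subspace, namely that it is governed by the Hessian restricted to the parallel nullspace and not by $Q_{\alpha_z}$ itself. The second is the essential use of orthonormality of $Z$: without $Z^T Z = I_k$ one would instead face the generalized eigenvalue problem for the pair $(Z^T Q Z, Z^T Z)$, and the stated threshold on $\alpha_z$ would have to be replaced by the corresponding smallest generalized eigenvalue.
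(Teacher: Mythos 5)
Your proposal is correct and follows essentially the same route as the paper: both reduce the claim to positive semidefiniteness of the Hessian in the nullspace coordinates, i.e.\ $Z^T Q_{\alpha_z} Z = Z^T Q Z + \alpha_z Z^T Z = Z^T Q Z + \alpha_z I$ by orthonormality, and then apply the Rayleigh-quotient bound. The paper performs the substitution $x = x_h + Z x_z$ and leaves this verification implicit ("it is easily verified"), whereas you spell out the details; the content is the same.
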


\begin{proof}
Let $\H = \{x \in \R^n \; | \; Ax = b \}$, and denote by $r$ the rank of  $A$. It is clear than any point satisfying $Ax = b$ can be expressed as $x = x_h + Z x_z$, where $x_h \in \H$, $x_z \in \R^{n-r}$, and $Z \in \R^{n \times n-r}$. By using this transformation, we can write~\eqref{EIGRNS1} as:
\begin{equation}
\label{EIGRNS_convexity_1}
\begin{array}{cl}
\underset{x_z}{\text{min}}\;\; & {\left(x_h + Z x_z \right)}^T Q_{\alpha_{z}} \left(x_h + Z x_z \right) + q_{\alpha_{z}}^T \left(x_h + Z x_z \right) + k_{\alpha_{z}} \\
\st\;\;        & C \left(x_h + Z x_z \right) \leq d \\
               & l \leq \left(x_h + Z x_z \right) \leq u.              
\end{array}
\end{equation}

It is easily verified that~\eqref{EIGRNS_convexity_1} is convex for all $\alpha_z \geq -\min(0, \lambda_{\text{min}} (Z^T Q Z))$.
\end{proof}

From Proposition~\ref{EIGRNS_convexity}, it follows that the tightest relaxation of the form~\eqref{EIGRNS1} is obtained by setting $\alpha_z  = -\min(0, \lambda_{\text{min}} (Z^T Q Z))$. We refer to this convex relaxation of~\eqref{problem_statement} as the \textit{eigenvalue relaxation in the nullspace of $A$}. In the following proposition, we show that this relaxation is at least as tight as the generalized eigenvalue relaxation.

\begin{proposition}
\label{GEIGR_EIGRNS_comparison}
Assume that $\alpha_g = -\min(0, \lambda_{\emph{\text{min}}} (Q, I_n + A^T A))$ in~\eqref{GEIGR3} and $\alpha_z  = -\min(0, \lambda_{\emph{\text{min}}} (Z^T Q Z))$ in~\eqref{EIGRNS1}. Let $\mu_{\emph{\text{GEIG}}}$ and $\mu_{\emph{\text{EIGZ}}}$ denote the optimal objective function values in~\eqref{GEIGR3} and~\eqref{EIGRNS1}, respectively. Then, $\mu_{\emph{\text{EIGZ}}} \geq \mu_{\emph{\text{GEIG}}}$.
\end{proposition}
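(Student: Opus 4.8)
The plan is to follow the strategy used in the proof of Proposition~\ref{EIGR_GEIGR_comparison}: reduce the comparison of the two bounds to a comparison of the two perturbation parameters, and then settle the parameter comparison with a Rayleigh-quotient argument. First I would observe that, on the feasible set ${\cal X}$, both relaxations reduce to the common form
$$h_\alpha(x) := f(x) + \alpha \sum_{i=1}^n (x_i - l_i)(x_i - u_i),$$
where $f(x) = x^T Q x + q^T x$. For \eqref{EIGRNS1} this is immediate after expanding $Q_{\alpha_z}$, $q_{\alpha_z}$, and $k_{\alpha_z}$; for \eqref{GEIGR3} one uses that every $x \in {\cal X}$ satisfies $Ax = b$, so the quadratic contribution $\alpha_g\, x^T A^T A x$ together with the linear and constant terms involving $A^T b$ and $b^T b$ collapses to $\alpha_g \|Ax - b\|^2 = 0$, leaving exactly $h_{\alpha_g}$. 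Hence $\mu_{\text{EIGZ}} = \mu(\alpha_z)$ and $\mu_{\text{GEIG}} = \mu(\alpha_g)$, where $\mu(\alpha) := \min_{x \in {\cal X}} h_\alpha(x)$.

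Second, since $l_i \le x_i \le u_i$ on ${\cal X}$ gives $(x_i - l_i)(x_i - u_i) \le 0$, the map $\alpha \mapsto h_\alpha(x)$ is nonincreasing for each fixed $x \in {\cal X}$, so $\mu$ is nonincreasing. Therefore $\mu_{\text{EIGZ}} \ge \mu_{\text{GEIG}}$ follows once I show $\alpha_z \le \alpha_g$, exactly as in Proposition~\ref{EIGR_GEIGR_comparison}.

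Third, the heart of the argument is the inequality $\lambda_{\text{min}}(Z^T Q Z) \ge \lambda_{\text{min}}(Q, I_n + A^T A)$. The key observation is that on the nullspace $N = \{x : Ax = 0\}$ one has $x^T(I_n + A^T A)x = x^T x$, so the generalized Rayleigh quotient in \eqref{def_RayleighQuotient} restricted to $N$ coincides with the ordinary Rayleigh quotient $x^T Q x / x^T x$. Writing $x = Z x_z$ and using the orthonormality of $Z$ (so that $x^T x = x_z^T x_z$ and $x^T Q x = x_z^T Z^T Q Z x_z$), the minimum of this restricted quotient over $x \in N \setminus \{0\}$ is exactly $\lambda_{\text{min}}(Z^T Q Z)$. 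Since the full generalized eigenvalue minimizes the same quotient over all of $\R^n \setminus \{0\} \supseteq N \setminus \{0\}$, minimizing over the larger set can only decrease the value, giving $\lambda_{\text{min}}(Q, I_n + A^T A) \le \lambda_{\text{min}}(Z^T Q Z)$.

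Finally, I would translate this into $\alpha_z \le \alpha_g$ through the $\min(0, \cdot)$ definitions. If $\lambda_{\text{min}}(Z^T Q Z) \ge 0$, then $\alpha_z = 0 \le \alpha_g$ trivially. If $\lambda_{\text{min}}(Z^T Q Z) < 0$, then by the inequality above $\lambda_{\text{min}}(Q, I_n + A^T A) \le \lambda_{\text{min}}(Z^T Q Z) < 0$, so both minima equal the eigenvalues themselves and $\alpha_z = -\lambda_{\text{min}}(Z^T Q Z) \le -\lambda_{\text{min}}(Q, I_n + A^T A) = \alpha_g$. I expect the main obstacle to be the first and third steps jointly: correctly recognizing that the $A^T A$ terms cancel on ${\cal X}$ so that both relaxations share the form $h_\alpha$, and that the restriction of the generalized Rayleigh quotient to $N$ collapses the denominator to $x^T x$. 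Once these two identifications are in place, the monotonicity reduction and the parameter comparison are routine.
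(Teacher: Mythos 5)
Your proposal is correct and follows essentially the same route as the paper: reduce the bound comparison to showing $\alpha_z \le \alpha_g$, and obtain that from $\lambda_{\text{min}}(Z^TQZ) \ge \lambda_{\text{min}}(Q, I_n + A^TA)$ by noting that the generalized Rayleigh quotient restricted to the nullspace of $A$ collapses to the ordinary quotient and is minimized over a smaller set. Your write-up merely makes explicit two steps the paper leaves implicit (the cancellation of the $\|Ax-b\|^2$ terms on ${\cal X}$ and the monotonicity of the common objective in $\alpha$), which is a sound elaboration rather than a different argument.
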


\begin{proof}
To prove that $\mu_{\text{EIGZ}} \geq \mu_{\text{GEIG}}$, it suffices to show that $\alpha_z \leq \alpha_g$. Similar to~\eqref{def_RayleighQuotient}, the smallest eigenvalue of $Z^TQZ$ can be expressed as:
\begin{equation}
\label{GEIGR_EIGRNS_comparison_2}
\begin{array}{cc}
\lambda_{\text{min}} (Z^T Q Z) = \underset{x \neq 0, Ax = 0}{\text{min}} \;\; \dfrac{x^T Q x}{x^T x}    
= \underset{x \neq 0, Ax = 0}{\text{min}} \;\; \dfrac{x^T Q x}{x^T \left( I_n + A^TA \right) x}
\end{array}
\end{equation}
where for the second equality we used the fact that the minimization is over vectors $x$ that lie in the null space of $A$. The restriction of vectors $x$ to the null space of $A$ also implies that 
$\lambda_{\text{min}} (Z^T Q Z) \geq \lambda_{\text{min}} (Q,I_n+A^TA)$.  This is easily seen by noting that the Rayleigh quotient expression for the generalized eigenvalue of the pair $(Q,I_n+A^TA)$ in~\eqref{def_RayleighQuotient} is over a larger domain. Hence, $\alpha_z \leq \alpha_g$, and $\mu_{\text{EIGZ}} \geq \mu_{\text{GEIG}}$. 
\end{proof}

From Proposition~\ref{GEIGR_EIGRNS_comparison}, it follows that the eigenvalue relaxation in the nullspace of $A$ can be potentially tighter than the generalized eigenvalue relaxation. However, the computation of $Z$ can be computationally expensive. Therefore, an important question is whether we can obtain a good approximation of $\lambda_{{\text{min}}} (Z^T Q Z)$ without having to explicitly compute $Z$. This question is addressed by the following proposition.

\begin{proposition}
\label{EIGRNS_approximation}
Let $\delta$ be a real scalar. Then, the following hold:

\begin{enumerate}[(i)]

\item If the matrix $Q$ is indefinite, $\lambda_{\emph{\text{min}}} (Q, I_n + \delta A^T A)$ is a strictly increasing function of $\delta$ for $\delta \geq 1$.

\item $\lim_{\delta \to\infty} \lambda_{\emph{\text{min}}} (Q, I_n + \delta A^T A) = \min(0, \lambda_{\emph{\text{min}}} (Z^T Q Z))$.

\end{enumerate}

\end{proposition}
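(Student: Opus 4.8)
The plan is to analyze the generalized eigenvalue $\lambda_{\text{min}}(Q, I_n + \delta A^T A)$ through its Rayleigh-quotient characterization from~\eqref{def_RayleighQuotient}, namely
\begin{equation}
\lambda_{\text{min}}(Q, I_n + \delta A^T A) = \underset{x \neq 0}{\text{min}} \;\; \frac{x^T Q x}{x^T x + \delta \, \|Ax\|^2}.
\notag
\end{equation}
For part (i), I would first argue that since $Q$ is indefinite the minimizer $x^\star$ must achieve a negative value, so $x^{\star T} Q x^\star < 0$ for the optimal $x^\star$. Fixing such a minimizer for a given $\delta$, I would then observe that if $\|Ax^\star\|^2 > 0$, increasing $\delta$ strictly increases the denominator while keeping the numerator negative, which strictly \emph{increases} the (negative) ratio; and the minimum over all $x$ can only increase further. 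The delicate point is to rule out the possibility that the optimal $x^\star$ lies entirely in the nullspace of $A$ (so that $Ax^\star = 0$ and the ratio is insensitive to $\delta$): I would handle this by noting that if every minimizer satisfied $Ax^\star = 0$, then $\lambda_{\text{min}}(Q,I_n+\delta A^TA)$ would equal $\lambda_{\text{min}}(Z^TQZ)$ and be constant in $\delta$, but one must show that for $\delta \geq 1$ strictness genuinely holds; the cleanest route is a direct comparison argument showing that for $\delta_2 > \delta_1 \geq 1$, evaluating the $\delta_1$-minimizer in the $\delta_2$-quotient yields a strictly larger value whenever that minimizer has $Ax^\star \neq 0$, combined with a separate treatment when the minimizer is in the nullspace.

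For part (ii), the idea is that as $\delta \to \infty$ the denominator heavily penalizes any $x$ with $Ax \neq 0$, so the minimization effectively restricts to the nullspace of $A$. I would make this precise in two directions. The upper bound $\lim_{\delta\to\infty}\lambda_{\text{min}}(Q,I_n+\delta A^TA) \leq \min(0,\lambda_{\text{min}}(Z^TQZ))$ follows by restricting the minimization to $x$ with $Ax = 0$ (where the quotient reduces to $x^TQx/x^Tx$) together with the trivial feasibility of scaling to make the ratio nonpositive, giving for every $\delta$ a value no larger than $\min(0,\lambda_{\text{min}}(Z^TQZ))$ via~\eqref{GEIGR_EIGRNS_comparison_2}. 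For the matching lower bound, I would take a sequence $x^\delta$ of minimizers (normalized so $\|x^\delta\|=1$) and argue that $\delta\|Ax^\delta\|^2$ stays bounded; otherwise the quotient would tend to $0$, contradicting part~(i)'s monotone increase toward a limit that is at most $\min(0,\lambda_{\text{min}}(Z^TQZ)) \leq 0$. Hence $\|Ax^\delta\|^2 \to 0$, so any limit point $\bar x$ of $x^\delta$ lies in the nullspace of $A$, and by lower semicontinuity the limiting value is at least $\bar x^T Q \bar x \geq \min(0,\lambda_{\text{min}}(Z^TQZ))$.

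The main obstacle I anticipate is the interaction between the two cases in part~(i): the monotonicity is only \emph{strict} for $\delta \geq 1$, and the threshold at $\delta = 1$ is not arbitrary. I expect it arises because $I_n + A^T A \succ 0$ is used as the reference matrix in the definition of the generalized eigenvalue (so the construction of $\alpha_g$ in~\S\ref{sec:geigrel} corresponds exactly to $\delta = 1$), and one needs $\delta \geq 1$ to guarantee that the comparison $x^T(I_n + \delta A^TA)x \geq x^Tx$ used throughout remains valid and that the minimizer has genuinely moved off the nullspace. Carefully establishing that every minimizer for $\delta \geq 1$ has $Ax^\star \neq 0$ (equivalently, that the unconstrained minimum is strictly below $\lambda_{\text{min}}(Z^TQZ)$ in this regime) is the crux; once that is in hand, both the strict monotonicity and the limit follow by the elementary quotient estimates sketched above.
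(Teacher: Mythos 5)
Your plan for part (i) follows the same route as the paper's proof: on the set $D=\{x\neq 0 : x^TQx<0\}$ (nonempty since $Q$ is indefinite) the Rayleigh quotient in \eqref{def_RayleighQuotient} has a negative numerator, so increasing $\delta$ enlarges the denominator and pushes the quotient up, and the minimum is attained on $D$. The difference is that you explicitly isolate the case $Ax^\star=0$ as the crux and leave it open, whereas the paper simply asserts the strict inequality \eqref{EIGRNS_approximation_3} for \emph{all} $x\in D$ and concludes. You are right to worry: for $x\in D$ with $Ax=0$ the two quotients coincide, so strictness fails pointwise, and if the minimizer of $x^TQx/x^Tx$ lies in the nullspace of $A$ then $\lambda_{\text{min}}(Q,I_n+\delta A^TA)$ is constant in $\delta$. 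Concretely, take $Q=\mathrm{diag}(1,-1)$ and $A=(1\;\;0)$: the generalized eigenvalues of $(Q,I_2+\delta A^TA)$ are $1/(1+\delta)$ and $-1$, so the smallest one equals $-1$ for every $\delta$. Hence the step you postpone --- showing that every minimizer for $\delta\geq 1$ has $Ax^\star\neq 0$ --- cannot be established from the stated hypotheses; strict monotonicity needs the extra assumption that $\lambda_{\text{min}}(Q,I_n+\delta A^TA)<\min\bigl(0,\lambda_{\text{min}}(Z^TQZ)\bigr)$ for the relevant $\delta$. Your proposal is therefore incomplete at exactly the point where the paper's own argument is also incomplete; only the non-strict monotonicity (which is all that is used later) follows from the pointwise comparison you give. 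The threshold $\delta\geq 1$ is not the issue --- the same comparison works for any $\delta_2>\delta_1\geq 0$.

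For part (ii) your argument is sound and in fact more careful than the paper's, which decomposes $x=y+z$ into row-space and nullspace components and then interchanges $\lim_{\delta\to\infty}$ with $\min_x$ without justification. Your two-sided argument --- upper bound by restricting the quotient to $\ker A$ and by evaluating it at a fixed $x\notin\ker A$ (which forces the limit to be $\leq 0$), and a matching lower bound via normalized minimizers $x^\delta$ with $\delta\|Ax^\delta\|^2$ bounded, so that every limit point lies in $\ker A$ --- is the standard way to make that interchange rigorous. One small repair: ``scaling to make the ratio nonpositive'' does nothing, since the Rayleigh quotient is scale-invariant; the $\min(0,\cdot)$ in the limit comes precisely from testing the quotient at a fixed $x$ with $Ax\neq 0$ and letting $\delta\to\infty$.
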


\begin{proof}

We start with the proof of (i). Let $\delta_1, \delta_2 \in \R$ be two scalars such that $\delta_2 > \delta_1 \geq 1$. Define the set $D = \{ x \in \R^n : x \neq 0, x^T Q x < 0 \}$. Since the matrix $Q$ is indefinite by assumption, it is clear that $D \neq \emptyset$. From the definition of the set $D$, it is easy to check that the following inequality holds:
\begin{equation}
\label{EIGRNS_approximation_3}
\begin{array}{cc}
\dfrac{x^T Q x}{x^T \left( I_n + \delta_2 A^T A \right) x} > \dfrac{x^T Q x}{x^T \left( I_n + \delta_1 A^T A \right) x}, \;\;\; \forall x \in D
\end{array}
\end{equation}
Using the definition of the Rayleigh quotient in~\eqref{def_RayleighQuotient}, $D \neq \emptyset$  and~\eqref{EIGRNS_approximation_3}, it is simple to verify that $\lambda_{\text{min}} (Q, I_n + \delta_2 A^T A) > \lambda_{\text{min}} (Q, I_n + \delta_1 A^T A)$ which proves (i).

To prove (ii), consider the Rayleigh quotient definition in~\eqref{def_RayleighQuotient} for $(Q, I_n + \delta A^T A)$. Let $x = y + z$, where $y, z \in \R^n$ are orthogonal vectors which belong to the row space and nullspace of $A$, respectively. Then, by using this transformation in~\eqref{def_RayleighQuotient}, we have:
\begin{equation}
\label{EIGRNS_approximation_5}
\begin{array}{l}
\lim\limits_{\delta \to\infty} \lambda_{\text{min}} (Q, I_n + \delta A^T A) = \lim\limits_{\delta \to\infty} \underset{(y + z) \neq 0}{\text{min}} \;\; \dfrac{(y + z)^T Q (y + z)}{(y + z)^T (y + z) + \delta y^T A^T A y}. 
\end{array}
\end{equation}
To determine the limit in~\eqref{EIGRNS_approximation_5}, we consider the following cases:

\begin{enumerate}[(i)]

\item $y \neq 0$. In this case, we obtain:
\begin{equation}
\label{EIGRNS_approximation_6}
\begin{array}{l}
\underset{(y + z) \neq 0}{\text{min}} \lim\limits_{\delta \to\infty} \;\; \dfrac{(y + z)^T Q (y + z)}{(y + z)^T (y + z) + \delta y^T A^T A y} = 0.
\end{array}
\end{equation}
\item $y = 0$. In this case,~\eqref{EIGRNS_approximation_5} reduces to: 
\begin{equation}
\label{EIGRNS_approximation_7}
\begin{array}{l}
\lim\limits_{\delta \to\infty} \underset{z \neq 0}{\text{min}} \;\; \dfrac{z^T Q z}{z^T z} = \lim\limits_{\delta \to\infty} \underset{z \neq 0, Az = 0}{\text{min}} \;\; \dfrac{z^T Q z}{z^T z} = \lambda_{\text{min}} (Z^T Q Z).
\end{array}
\end{equation}
\end{enumerate}
Then, it follows that $\lim_{\delta \to\infty} \lambda_{\text{min}} (Q, I_n + \delta A^T A) = \min(0, \lambda_{\text{min}} (Z^T Q Z))$. 
\end{proof}

Proposition~\ref{EIGRNS_approximation} has very important consequences since it suggests we can approximate the bound given by the eigenvalue relaxation in the nullspace of $A$ by solving the following quadratic program for a sufficiently large value of $\delta$:
\begin{equation}
\label{GEIGR_delta_1}
\begin{array}{cl}
\underset{x \in {\cal X}}{\text{min}}\;\; & x^T Q x + q^T x + \alpha(\delta) (x^T x - (l + u)^T x + l^T u ) + \alpha(\delta) \cdot \delta \cdot \|Ax -b\|^2
\end{array}
\end{equation}
where $\alpha(\delta) = - \lambda_{\text{min}} (Q, I_n + \delta A^T A)$. Note that, for $\delta = 1$,~\eqref{GEIGR_delta_1} corresponds to the generalized eigenvalue relaxation introduced in \S\ref{sec:geigrel}.

Since $\lambda_{\text{min}} (Q, I_n + \delta A^T A)$ is a strictly increasing function of $\delta$ for $\delta \geq 1$, Proposition~\ref{EIGRNS_approximation} implies that as $\delta$ is increased, $\alpha(\delta)$ will converge to either $0$ or  $-\lambda_{\text{min}} (Z^T Q Z)$. The case in which $\alpha(\delta)$ converges to $0$ is particularly interesting since it indicates that $\lambda_{\text{min}} (Z^T Q Z) \geq 0$, and the continuous relaxation of~\eqref{problem_statement} is convex when restricted to the nullspace of $A$. Note that $\lambda_{\text{min}} (Q) < 0$ does not necessarily imply that $\lambda_{\text{min}} (Z^T Q Z) < 0$, and as a result, the continuous relaxation of~\eqref{problem_statement} may be convex when restricted to the nullspace of $A$, even if it is nonconvex in the space of the original problem variables.

The quadratic term $\alpha(\delta) \cdot \delta \cdot \|Ax -b\|^2$ vanishes for any $x$ feasible in~\eqref{GEIGR_delta_1}. This term is included in the objective function of~\eqref{GEIGR_delta_1} to ensure that $Q + \alpha(\delta) (I_n + \delta A^T A)$ is positive semidefinite. However, this term need not be included for~\eqref{GEIGR_delta_1} to be convex. Proposition~\ref{EIGRNS_convexity} implies that~\eqref{GEIGR_delta_1} is convex $\alpha(\delta) \geq -\min(0, \lambda_{\text{min}} (Z^T Q Z))$. The definition of $\alpha(\delta)$ and Proposition~\ref{EIGRNS_approximation} imply that $\alpha(\delta) \geq - \min(0, \lambda_{\text{min}} (Z^T Q Z))$ holds for any $\delta \geq 1$. 
As a result, the quadratic term $\alpha (\delta) \cdot \delta \cdot \|Ax -b\|^2$ can be dropped from the objective function of~\eqref{GEIGR_delta_1}, which simplifies this relaxation to:
\begin{equation}
\label{GEIGR_delta_2}
\begin{array}{cl}
\underset{x \in {\cal X}}{\text{min}}\;\; & x^T Q x + q^T x + \alpha(\delta) (x^T x - (l + u)^T x + l^T u ) \\
\end{array}
\end{equation}

This simplification has two significant advantages. First, it allows us to preserve the sparsity pattern defined by $Q$. Second, it prevents the relaxation from becoming ill-conditioned since $\delta$ does not figure in the objective function of~\eqref{GEIGR_delta_2} and is only used to determine $\alpha(\delta)$. We can use a simple iterative procedure to determine a value of $\delta$ which leads to a good approximation of the bound given by the eigenvalue relaxation in the nullspace of $A$. We detail such procedure in \S\ref{implementation}. 

By considering a quadratic relaxation of the form~\eqref{GEIGR_delta_2}, there is no need to project onto the nullspace of $A$. This is particularly advantageous in the context of the branching variable selection rules that we introduce in \S\ref{spectral_branching}, since the branching decisions are easier to interpret in the space of the original problem variables.

We finish this section by showing that the eigenvalue relaxation in the null space of $A$ is equivalent to the following SDP:
\begin{subequations}
\label{SDP_EIGRNS}
\begin{align}
\underset{x \in {\cal X}, X}{\text{min}}\;\; & \langle Q, X \rangle + q^T x \label{SDP_EIGRNS_obj} \\
\st\;\;     & X - x x^T \succcurlyeq 0 \label{SDP_EIGRNS_c3} \\
  			& \langle I_n, X \rangle - {\left( l + u\right)}^T x + l^T u \leq 0 \label{SDP_EIGRNS_c4} \\ 			 			
 			& \langle A^T A, X \rangle - {\left(2 A^T b \right)}^T x + b^T b = 0 \label{SDP_EIGRNS_c5} 			
\end{align}
\end{subequations}

\begin{proposition}
\label{EIGRNS_SDP_EIGRNS_equivalence}
Suppose that the matrix $Z^T Q Z$ is indefinite. Let $\mu_{\emph{\text{EIGZ}}}(\alpha_z)$ and $\mu_{\emph{\text{SDP\_EIGZ}}}$ be the optimal objective function values in~\eqref{EIGRNS1} and~\eqref{SDP_EIGRNS}, respectively. Then, the following hold:
\begin{enumerate}[(i)]

\item If $\alpha_z \geq -\lambda_{\emph{\text{min}}} (Z^T Q Z)$ in~\eqref{EIGRNS1}, then $\mu_{\emph{\text{EIGZ}}}(\alpha_z) \leq \mu_{\emph{\text{SDP\_EIGZ}}}$.
\item If $\alpha_z = -\lambda_{\emph{\text{min}}} (Z^T Q Z)$ in~\eqref{EIGRNS1}, then $\mu_{\emph{\text{EIGZ}}}(\alpha_z) = \mu_{\emph{\text{SDP\_EIGZ}}}$.

\end{enumerate}
\end{proposition}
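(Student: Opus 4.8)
The plan is to mirror the structure of the proofs of Propositions~\ref{EIGR_SDP_EIGR_equivalence} and~\ref{GEIGR_SDP_GEIGR_equivalence} exactly, since~\eqref{SDP_EIGRNS} is the SDP~\eqref{SDP_EIGR} augmented with the single lifted equality constraint~\eqref{SDP_EIGRNS_c5} derived from $(Ax-b)^T(Ax-b)=0$. First I would establish that~\eqref{SDP_EIGRNS} admits a strictly feasible solution so that Slater's condition holds and strong duality is available. I would reuse the construction from Proposition~\ref{GEIGR_SDP_GEIGR_equivalence}: pick $\bar{x}$ with $A\bar{x}=b$, $C\bar{x}<d$, and $l<\bar{x}<u$, and define $\bar{X}$ so that $\hat{X}:=\bar{X}-\bar{x}\bar{x}^T$ is strictly positive definite while satisfying the equality~\eqref{SDP_EIGRNS_c5} exactly and the inequality~\eqref{SDP_EIGRNS_c4} strictly. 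The key point is that choosing the off-diagonal entries $\bar{X}_{ij}=\bar{x}_i\bar{x}_j$ forces $\hat{X}$ to be diagonal, and since $\langle A^TA,\bar{X}\rangle-(2A^Tb)^Tx+b^Tb=\langle A^TA,\hat X\rangle + (A\bar x - b)^T(A\bar x - b)$, the constraint~\eqref{SDP_EIGRNS_c5} imposes a single linear relation on the diagonal entries $\hat X_{ii}$; because $A\bar x=b$ this reduces to $\langle A^TA,\hat X\rangle=0$, which I can satisfy together with strict positivity of all $\hat X_{ii}$ by a small perturbation, exactly paralleling the $\delta$-argument already used.

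Next I would form the dual of~\eqref{SDP_EIGRNS} via Proposition~\ref{prop:SDP_dual}, now with \emph{two} multipliers: $\hat\alpha_z\in\R_{\geq 0}$ for the inequality~\eqref{SDP_EIGRNS_c4} and $\hat\gamma\in\R$ (free) for the equality~\eqref{SDP_EIGRNS_c5}. By the proposition the dual reads
\begin{equation}
\underset{\hat\alpha_z\in\R_{\geq 0},\,\hat\gamma\in\R:\,Q+\hat\alpha_z I_n+\hat\gamma A^TA\succcurlyeq 0}{\text{max}}\left\lbrace\;\underset{x\in{\cal X}}{\text{min}}\;\;x^T(Q+\hat\alpha_z I_n+\hat\gamma A^TA)x+\hat q^Tx+\hat k\;\right\rbrace,\notag
\end{equation}
where the linear and constant terms absorb $\hat\alpha_z(l+u),\hat\alpha_z l^Tu$ and $\hat\gamma(2A^Tb),\hat\gamma b^Tb$ in the obvious way. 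The crucial observation, and the one that ties the dual to~\eqref{EIGRNS1}, is that every term multiplied by $\hat\gamma$ vanishes on ${\cal X}$ because $Ax=b$ there: $\hat\gamma(x^TA^TAx-2b^TAx+b^Tb)=\hat\gamma\|Ax-b\|^2=0$. Thus for any dual-feasible $(\hat\alpha_z,\hat\gamma)$ the inner minimand agrees on ${\cal X}$ with the objective of~\eqref{EIGRNS1} evaluated at parameter $\hat\alpha_z$, giving $\mu_{\text{EIGZ}}(\hat\alpha_z)$. Weak duality then yields $\mu_{\text{EIGZ}}(\alpha_z)\leq\mu_{\text{SDP\_EIGZ}}$ for any feasible choice, which is exactly part~(i) once I note that $\alpha_z\geq-\lambda_{\text{min}}(Z^TQZ)$ makes $(\hat\alpha_z,\hat\gamma)=(\alpha_z,\hat\gamma)$ dual-feasible for suitable $\hat\gamma$.

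For part~(ii) I would argue that the dual optimum equals $\mu_{\text{EIGZ}}(\alpha_z)$ at $\alpha_z=-\lambda_{\text{min}}(Z^TQZ)$. The freedom in $\hat\gamma$ is what makes the PSD constraint $Q+\hat\alpha_z I_n+\hat\gamma A^TA\succcurlyeq 0$ feasible with a \emph{smaller} $\hat\alpha_z$ than in the plain eigenvalue case: by Proposition~\ref{EIGRNS_approximation}, as $\hat\gamma\to\infty$ the smallest generalized eigenvalue $\lambda_{\text{min}}(Q,I_n+\hat\gamma A^TA)$ increases to $\min(0,\lambda_{\text{min}}(Z^TQZ))$, so the minimal admissible $\hat\alpha_z$ over all $\hat\gamma$ is precisely $-\min(0,\lambda_{\text{min}}(Z^TQZ))=-\lambda_{\text{min}}(Z^TQZ)$ (using indefiniteness of $Z^TQZ$). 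Since the inner minimum is independent of $\hat\gamma$ on ${\cal X}$ and $\mu_{\text{EIGZ}}(\hat\alpha_z)$ is monotonically decreasing in $\hat\alpha_z$, the dual is maximized by driving $\hat\alpha_z$ down to this infimum, giving dual value $\mu_{\text{EIGZ}}(-\lambda_{\text{min}}(Z^TQZ))$; strong duality then identifies this with $\mu_{\text{SDP\_EIGZ}}$.

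The main obstacle I anticipate is the part~(ii) boundary argument: unlike in the previous two propositions where the minimal $\hat\alpha_z$ was attained at a fixed value, here the infimum of admissible $\hat\alpha_z$ over $\hat\gamma\in\R$ is reached only in the \emph{limit} $\hat\gamma\to\infty$ (Proposition~\ref{EIGRNS_approximation}(ii) is stated as a limit, and~(i) shows strict monotonicity, so the value is approached but a finite maximizer of the dual may fail to exist). I would need to confirm that the supremum of the dual objective still equals $\mu_{\text{EIGZ}}(-\lambda_{\text{min}}(Z^TQZ))$ even though it may not be attained at any finite $(\hat\alpha_z,\hat\gamma)$, which is legitimate because strong duality guarantees the \emph{primal} optimum is attained and equals the dual \emph{supremum}; the supremum over the feasible region (whose closure includes the limiting PSD matrix $Z^TQZ$-completed problem) is continuous in $\hat\alpha_z$, so taking $\hat\gamma\to\infty$ and $\hat\alpha_z\to-\lambda_{\text{min}}(Z^TQZ)$ gives the claimed equality by a standard closure/continuity argument.
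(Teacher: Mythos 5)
There is a genuine gap, and it sits exactly where your proof needs to do the most work: the SDP~\eqref{SDP_EIGRNS} has \emph{no} strictly feasible point, so Slater's condition fails and strong duality cannot be invoked. For any feasible $(x,X)$ we have $Ax=b$, so the equality~\eqref{SDP_EIGRNS_c5} reduces to $\langle A^TA, X-xx^T\rangle=0$; since $A^TA\succcurlyeq 0$ is nonzero and $X-xx^T\succcurlyeq 0$, this forces $X-xx^T$ to be singular (indeed, if $X-xx^T\succ 0$ then $\langle A^TA,X-xx^T\rangle=\mathrm{tr}\bigl((X-xx^T)^{1/2}A^TA(X-xx^T)^{1/2}\bigr)>0$). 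Your own reduction makes the contradiction visible: with $\hat X$ diagonal you need $\langle A^TA,\hat X\rangle=\sum_i(A^TA)_{ii}\hat X_{ii}=0$ with every $(A^TA)_{ii}\geq 0$ and some $(A^TA)_{ii}>0$, which is incompatible with $\hat X_{ii}>0$ for all $i$. No ``small perturbation'' can fix this. Consequently the entire part~(ii) argument collapses: the obstacle you flag at the end (the dual supremum is approached only as $\hat\gamma\to\infty$, by Proposition~\ref{EIGRNS_approximation}) is real, but your proposed escape --- ``strong duality guarantees the primal optimum equals the dual supremum'' --- is precisely the statement you no longer have. Your part~(i) survives, since it only uses weak duality plus the observation that the $\hat\gamma$-terms vanish on ${\cal X}$, and this matches the first half of the paper's argument.

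The paper's proof explicitly notes the failure of Slater and replaces the strong-duality step by a direct primal construction: letting $\hat x$ be optimal for~\eqref{EIGRNS1} with $\alpha_z=-\lambda_{\text{min}}(Z^TQZ)$ and $v$ the corresponding eigenvector of $Z^TQZ$, it sets $\hat X=\hat x\hat x^T+\gamma Zv(Zv)^T$ with $\gamma=(l+u)^T\hat x-l^Tu-\hat x^T\hat x\geq 0$. This $(\hat x,\hat X)$ is feasible for~\eqref{SDP_EIGRNS} (the rank-one correction lives in the nullspace of $A$, so~\eqref{SDP_EIGRNS_c5} still holds, and $\gamma$ is tuned so that~\eqref{SDP_EIGRNS_c4} holds with equality) and attains objective value exactly $\mu_{\text{EIGZ}}$, giving $\mu_{\text{SDP\_EIGZ}}\leq\mu_{\text{EIGZ}}$; combined with the weak-duality bound this yields~(ii). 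To repair your proof you would need to abandon the Slater argument and supply such an explicit feasible witness (or some other substitute for the missing direction of strong duality).
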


\begin{proof}

Unlike~\eqref{SDP_EIGR} and~\eqref{SDP_GEIGR},~\eqref{SDP_EIGRNS} does not admit a strictly feasible solution. To illustrate this, note that, for any $x$ satisfying $A x = b$, \eqref{SDP_EIGRNS_c5} can be written as:
\begin{subequations}
\label{SDP_EIGRNS_equivalence_1}
\begin{align}
 & \langle A^T A, X \rangle - {\left(2 A^T b \right)}^T x + b^T b + \langle A^T A, xx^T \rangle - \langle A^T A, xx^T \rangle = 0 \label{SDP_EIGRNS_equivalence_1_c1} \\
\implies & \langle A^T A, X - xx^T \rangle = 0 \label{SDP_EIGRNS_equivalence_1_c3}
\end{align}
\end{subequations}
which implies that $X - xx^T$ cannot be positive definite for the pairs $(x,X)$ that are feasible in~\eqref{SDP_EIGRNS}. It follows that we cannot apply the strong duality theorem to~\eqref{SDP_EIGRNS}. As a result, the proof of this proposition relies on different arguments from those used in the proofs of Propositions~\ref{EIGR_SDP_EIGR_equivalence} and~\ref{GEIGR_SDP_GEIGR_equivalence}. We proceed in two steps:

\begin{enumerate}[(a)]

\item We show that the optimal objective function value of the dual problem of~\eqref{SDP_EIGRNS} provides an upper bound on~\eqref{EIGRNS1} when $\alpha_z \geq -\lambda_{{\text{min}}} (Z^T Q Z)$, and is equal to $\mu_\text{EIGZ}(\alpha_z)$ when $\alpha_z = -\lambda_{{\text{min}}} (Z^T Q Z)$. By weak duality, this implies that $\mu_{\text{SDP\_EIGZ}} \geq \mu_{\text{EIGZ}}(\alpha_z)$ for $\alpha_z \geq -\lambda_{{\text{min}}} (Z^T Q Z)$, proving the claim in (i).

\item We construct a feasible solution for~\eqref{SDP_EIGRNS} which attains the same objective function value as an optimal solution of~\eqref{EIGRNS1} when $\alpha_z = -\lambda_{{\text{min}}} (Z^T Q Z)$. This implies that $\mu_{\text{SDP\_EIGZ}} \leq \mu_{\text{EIGZ}}(\alpha_z)$ when $\alpha_z = -\lambda_{{\text{min}}} (Z^T Q Z)$. This observation combined with the result from (a) completes the proof of the claim in (ii).

\end{enumerate}

To prove (a), we use Proposition~\ref{prop:SDP_dual} to write the dual of~\eqref{SDP_EIGRNS} as:
\begin{equation}
\label{SDP_EIGRNS_equivalence_4}
\underset{\hat{\alpha}_z \in \R_{\geq 0}, \hat{\beta}_z \in \R: Q_{\hat{\alpha}_z, \hat{\beta}_z} \succcurlyeq 0}{\text{max}} \left\lbrace
\begin{aligned}
\centering
& \underset{x \in {\cal X}}{\text{min}}
& & x^T Q_{\hat{\alpha}_z, \hat{\beta}_z} x + q_{\hat{\alpha}_z, \hat{\beta}_z}^T x + k_{\hat{\alpha}_z, \hat{\beta}_z}  
\end{aligned}
\right\rbrace
\end{equation}
where $\hat{\alpha}_z$ and $\hat{\beta}_z$ are multipliers for~\eqref{SDP_EIGRNS_c4} and \eqref{SDP_EIGRNS_c5}, respectively, $Q_{\hat{\alpha}_z, \hat{\beta}_z} = Q + \hat{\alpha}_z I_n + \hat{\beta}_z A^T A$, $q_{\hat{\alpha}_z, \hat{\beta}_z} = q - \hat{\alpha}_z (l + u) - 2 \hat{\beta}_z A^T b$, and $k_{\hat{\alpha}_z, \hat{\beta}_z} = \hat{\alpha}_z l^T u + \hat{\beta}_z b^T b$. Let $\hat{\delta}_z = \hat{\beta}_z / \hat{\alpha}_z $. By substituting $\hat{\beta}_z = \hat{\delta}_z \hat{\alpha}_z$ in~\eqref{SDP_EIGRNS_equivalence_4}, the dual becomes:
\begin{equation}
\label{SDP_EIGRNS_equivalence_5}
\underset{\hat{\alpha}_z \in \R_{\geq 0}, \hat{\delta}_z \in \R : Q_{\hat{\alpha}_z, \hat{\delta}_z} \succcurlyeq 0}{\text{max}} \left\lbrace
\begin{aligned}
\centering
& \underset{x \in {\cal X}}{\text{min}}
& & x^T Q_{\hat{\alpha}, \hat{\delta}_z} x + q_{\hat{\alpha}, \hat{\delta}_z}^T x + k_{\hat{\alpha}, \hat{\delta}_z}  
\end{aligned}
\right\rbrace
\end{equation}
where $Q_{\hat{\alpha}_z, \hat{\delta}_z} = Q + \hat{\alpha}_z (I_n + \hat{\delta}_z A^T A )$, $q_{\hat{\alpha}_z, \hat{\delta}_z} = q - \hat{\alpha}_z ( l + u + 2 \hat{\delta}_z A^T b)$, and $k_{\hat{\alpha}_z, \hat{\delta}_z} = \hat{\alpha}_z ( l^T u + \hat{\delta}_z b^T b )$. Note that the quadratic term $\hat{\alpha}_z \hat{\delta}_z \|Ax - b\|^2$ vanishes for any $x$ feasible in the inner minimization problem. As a result,~\eqref{SDP_EIGRNS_equivalence_5} can be posed as:
\begin{equation}
\label{SDP_EIGRNS_equivalence_6}
\underset{\hat{\alpha}_z \in \R_{\geq 0}, \hat{\delta}_z \in \R : Q_{\hat{\alpha}_z, \hat{\delta}_z} \succcurlyeq 0}{\text{max}} \left\lbrace
\begin{aligned}
\centering
& \underset{x \in {\cal X}}{\text{min}}
& & x^T Q_{\hat{\alpha}_{z}} x + q_{\hat{\alpha}_{z}}^T x + k_{\hat{\alpha}_{z}}
\end{aligned}
\right\rbrace
\end{equation}
where $Q_{\hat{\alpha}_{z}} = Q + \hat{\alpha}_{z} I_n$, $q_{\hat{\alpha}_{z}} = q - \hat{\alpha}_{z} (l + u)$, and $k_{\hat{\alpha}_{z}} = \hat{\alpha}_{z} l^T u$. Since $Z^T Q Z$ is indefinite, $Q$ is indefinite as well. Proposition~\ref{EIGRNS_convexity} implies that, for a given value of $\hat{\delta}_z$, $Q_{{\alpha}_z, \hat{\delta}_z} \succcurlyeq 0$ when $\hat{\alpha}_z \geq - \lambda_{\text{min}} (Q, I_n + \hat{\delta}_z A^T A)$. For all such $\hat{\alpha}_z$, $\hat{\delta}_z$ it is simple to check that the objective function of the inner minimization problem of~\eqref{SDP_EIGRNS_equivalence_6} is monotonically decreasing in $\hat{\alpha}_z$. By using the fact that $\lambda_{\text{min}} (Z^T Q Z) < 0$ and Proposition~\ref{EIGRNS_approximation}, it is easy to verify that the maximum of~\eqref{SDP_EIGRNS_equivalence_6} is attained when $\hat{\alpha}_z = -\lim_{\hat{\delta}_z \to\infty} \lambda_{\text{min}} (Q, I_n + \hat{\delta}_z A^T A) = -\lambda_{\text{min}} (Z^T Q Z)$. Let $\mu_{\text{DSDP\_EIGZ}}$ be the optimal objective function value in~\eqref{SDP_EIGRNS_equivalence_6}. Since the inner problem in~\eqref{SDP_EIGRNS_equivalence_6} has the same form as the QP~\eqref{EIGRNS1}, it is simple to check that $\mu_{\text{DSDP\_EIGZ}} \geq \mu_{\text{EIGZ}}(\alpha_z)$ when $\alpha_z \geq -\lambda_{{\text{min}}} (Z^T Q Z)$, and $\mu_{\text{DSDP\_EIGZ}} = \mu_{\text{EIGZ}}(\alpha_z)$ when $\alpha_z = -\lambda_{{\text{min}}} (Z^T Q Z)$. By weak duality, it follows that $\mu_{\text{SDP\_EIGZ}} \geq \mu_{\text{EIGZ}}(\alpha_z)$ for $\alpha_z \geq -\lambda_{{\text{min}}} (Z^T Q Z)$. This proves the claim in (i).

Next, we prove (b). Let $\hat{x}$ denote an optimal solution of~\eqref{EIGRNS1} when $\alpha_z = -\lambda_{{\text{min}}} (Z^T Q Z)$. Define $\hat{X} = \hat{x} \hat{x}^T + \gamma Z v (Z v)^T$, where $\gamma = (l + u)^T \hat{x} - l^T u - \hat{x}^T \hat{x}$, and $v$ denotes the eigenvector corresponding to the smallest eigenvalue of $Z^T Q Z$. We first show that $(\hat{x}, \hat{X})$ is feasible in~\eqref{SDP_EIGRNS}. By definition, $\hat{x} \in {\cal X}$. Consider~\eqref{SDP_EIGRNS_c3}. Recall that the concave envelope of $x_i^2$ over $[l_i, u_i]$ is given by $(l_i + u_i) x_i - l_i u_i$. As a result, it is clear that each term $(l_i + u_i) \hat{x}_i - l_i u_i - \hat{x}_i^2$ is nonnegative, which in turn implies that $\gamma \geq 0$. Moreover, since $Z v (Z v)^T \succcurlyeq 0$, it follows that $\hat{X} - \hat{x} \hat{x}^T \succcurlyeq 0$.

Consider~\eqref{SDP_EIGRNS_c4} and~\eqref{SDP_EIGRNS_c5}. Substituting $(\hat{x}, \hat{X})$ in~\eqref{SDP_EIGRNS_c4}, we obtain:
\begin{equation}
\begin{aligned}
&\; \langle I_n, \hat{x} \hat{x}^T + \gamma Z v (Z v)^T \rangle - {\left( l + u\right)}^T \hat{x} + l^T u \\
=&\; \hat{x}^T \hat{x} + \gamma v^T Z^T Z v - {\left( l + u\right)}^T \hat{x} + l^T u 
= \hat{x}^T \hat{x} + \gamma - {\left( l + u\right)}^T \hat{x} + l^T u = 0.
\end{aligned}
\notag
\end{equation}
Similarly, substituting $(\hat{x}, \hat{X})$ in~\eqref{SDP_EIGRNS_c5} yields: 
\begin{equation}
\begin{aligned}
&\; \langle A^T A, \hat{x} \hat{x}^T + \gamma Z v (Z v)^T \rangle - {\left(2 A^T b \right)}^T \hat{x} + b^T b \\
= &\; \hat{x}^T A^T A \hat{x} - {\left(2 A^T b \right)}^T \hat{x} + b^T b + \gamma v^T Z^T A^T A Z v 
= (A \hat{x} - b)^T (A \hat{x} - b) = 0. 
\end{aligned}
\notag
\end{equation}
Let $f(x, X)$ be the objective function of~\eqref{SDP_EIGRNS}. The value of $f$ at $(\hat{x}, \hat{X})$ is:
\begin{equation}
\label{SDP_EIGRNS_equivalence_7}
\begin{aligned}
f(\hat{x}, \hat{X}) & = \langle Q, \hat{x} \hat{x}^T + \gamma Z v (Z v)^T \rangle + q^T \hat{x} \\
& = \hat{x}^T Q \hat{x} - \gamma \alpha_z + q^T \hat{x} \\
& = \hat{x}^T (Q + \alpha_z I_n) \hat{x} + (q - \alpha_z (l + u))^T \hat{x} + \alpha_z l^T u = \mu_{\text{EIGZ}}
\end{aligned}
\end{equation}
where we have used the fact that $v^T Z^T Q Z v = -\alpha_z = \lambda_{\text{min}} (Z^T Q Z)$. Since $(\hat{x}, \hat{X})$ is feasible in~\eqref{SDP_EIGRNS}, from~\eqref{SDP_EIGRNS_equivalence_7} it follows that $\mu_{\text{SDP\_EIGZ}} \leq \mu_{\text{EIGZ}}(\alpha_z)$ when $\alpha_z = -\lambda_{\text{min}} (Z^T Q Z)$. This observation combined with the result from (a) proves the claim in (ii), by showing that $\mu_{\text{SDP\_EIGZ}} = \mu_{\text{EIGZ}}(\alpha_z)$ when $\alpha_z = -\lambda_{\text{min}} (Z^T Q Z)$.
\end{proof}

\subsection{Further insights into the proposed quadratic relaxations}
\label{sec:further_insights}

The relaxations introduced in \S\ref{sec:eigrel}--\ref{sec:eigrel_ns} can be derived through the following four-step recipe:

\begin{enumerate}[(R1)]
    \item\label{recipe:step1} identify a (possibly empty) set $\J$ of quadratic functions of the form $f_j(x) = x^TS_jx + s_j^Tx + \eta_j$, where $S_j \in \Sy^n, s_j \in \R^n, \eta_j \in \R$, such that $f_j(x) = 0$ for $x \in \Omega := \{x \in \R^n \,|\, Ax = b\}$;
    \item\label{recipe:step2} construct an initial relaxation for~\eqref{problem_statement} as
\begin{equation}
\label{initial_relaxation_recipe}
\begin{array}{cl}
 \underset{x \in {\cal X}}{\text{min}}\;\; & x^TQx + q^Tx + \alpha (x^Tx - (l+u)^Tx + l^Tu) +  \sum_{j \in \J} \beta_j f_j(x)
\end{array}
\end{equation}
    where $\alpha \in \R_{\geq 0}, \beta_j \in \R$, such that $Q + \alpha I_n + \sum_{j \in \J} \beta_j S_j \succcurlyeq 0$;
    \item\label{recipe:step3} find $\alpha^*, \beta^*$ such that the bound given by the relaxation~\eqref{initial_relaxation_recipe} is maximized
    \begin{equation}
    \label{EIGRClass1}
    (\alpha^*,\beta^*) = \text{arg}
    \underset{\alpha \in \R_{\geq 0}, \beta \in \R^{|\J|}: Q_{\alpha,\beta} \succcurlyeq 0}{\text{max}} \left\lbrace
    \begin{aligned}
    \centering
    & \underset{x \in {\cal X}}{\text{min}}
    & & x^T Q_{\alpha} x + q_{\alpha}^T x + k_{\alpha} 
    \end{aligned}
    \right\rbrace
    \end{equation}
    where $Q_{\alpha} = Q + \alpha I_n$, $Q_{\alpha,\beta} = Q_{\alpha} + \sum_{j \in \J} \beta_j S_j$, $q_{\alpha} = q - \alpha (l+u)$, $k_{\alpha} = \alpha l^Tu$, and $\beta$ is the $|\J|$-dimensional vector whose entries are the parameters $\beta_j$;
    \item\label{recipe:step4} obtain the relaxation
    \begin{equation}
    \label{EIGRClass2}
    \begin{aligned}
    \centering
    & \underset{x \in {\cal X}}{\text{min}}
    & & x^T Q_{\alpha^*} x + q_{\alpha^*}^T x + k_{\alpha^*}. 
    \end{aligned}
    \end{equation}
\end{enumerate}
Observe that the parameters $\beta_j$ are not present in the objective function of the inner minimization problem in~\eqref{EIGRClass1} and the objective function in~\eqref{EIGRClass2} since $f_j(x) = 0$ for $x \in {\cal X} \subset \Omega$ (due to~(R\ref{recipe:step1})).  The three spectral relaxations presented in \S\ref{sec:eigrel}--\ref{sec:eigrel_ns} can be identified with~\eqref{EIGRClass1} by noting that:
\begin{itemize}
    \item $\J = \emptyset$, $\alpha^* = -\min(0,\lambda_{{\text{min}}} (Q))$ for the eigenvalue relaxation~\eqref{EIGR3}; 
    \item $\J = \{1\}$, $f_1(x) = \sum_{i=1}^m (A_{i\cdot} x - b_i)^2$, $\alpha^* = -\min(0,\lambda_{{\text{min}}} (Q, I_n + A^T A))$, $\beta_1^* = \alpha^*$ for the generalized eigenvalue relaxation~\eqref{GEIGR3}.  Note that in this case a further restriction that $\beta_1 = \alpha$ is imposed in~\eqref{EIGRClass1}; and 
    \item $\J = \{1\}$, $f_1(x) = \sum_{i=1}^m (A_{i\cdot} x - b_i)^2$, $\alpha^*  = -\min(0,\lambda_{{\text{min}}} (Z^T Q Z))$ and $\beta_1^* = +\infty$ for the eigenvalue relaxation on the nullspace of $A$~\eqref{EIGRNS1}.
\end{itemize}
From Propositions~\ref{EIGR_GEIGR_comparison} and~\ref{GEIGR_EIGRNS_comparison} we know that the lower bound obtained from the eigenvalue relaxation in the nullspace of $A$~\eqref{EIGRNS1} is at least as large as those provided by the other spectral relaxations. Further, the computation of $\alpha^*$ can be done efficiently.

The recipe~(R\ref{recipe:step1})-(R\ref{recipe:step4}) is preferable from a computational standpoint since the resulting relaxation is a quadratic program inheriting the sparsity of the problem. However, the step~(R\ref{recipe:step1}) allows for other choice for the functions $f_j(x)$ that have been considered in the literature (see Faye and Roupin~\cite{fr:07}). Some examples for the functions satisfying~(R\ref{recipe:step1}) are~\cite{fr:07}: $\left(x_j(A_{i\cdot}x - b_i)\right)$, $\left((A_{j\cdot}x - b_j)(A_{i\cdot}x-b_i)\right)$, $\left(x^TA_{j\cdot}^TA_{i\cdot}x - b_jb_i\right)$. 
This naturally raises the question: \emph{Can we improve on the bound provided by~\eqref{EIGRNS1} when restricted to the class of relaxations in~\eqref{EIGRClass1}}? In the rest of the section, we show that we cannot improve on the bound provided by the eigenvalue relaxation on the nullspace of $A$~\eqref{EIGRNS1}.  Thus, establishing that~\eqref{EIGRNS1} is the best among the class of relaxations in~\eqref{EIGRClass1}.

We begin by recalling the properties of functions satisfying~(R\ref{recipe:step1}). 
\begin{proposition}\label{prop:null_quad}
Let $f(x) = x^TSx + s^Tx + \eta$ be a quadratic function.  Then, $f(x) = 0$ for all $x \in \Omega := \{ x \in \R^n \,|\, Ax = b\}$ if and only if $S = A^TW^T + WA$, $s = A^T \nu - 2Wb$, $\eta = -b^T \nu$ for some $W \in \R^{n \times m}$ and $\nu \in \R^m$.
\end{proposition}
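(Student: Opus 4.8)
The plan is to prove the two implications separately, treating the \emph{if} direction as a direct substitution and concentrating the real work on the \emph{only if} direction. For sufficiency, I would plug the prescribed forms of $S$, $s$, $\eta$ into $f$ and simplify using $Ax=b$. The quadratic part becomes $x^T(A^TW^T+WA)x = 2x^TWb$, since each cross term $x^TA^TW^Tx = (Ax)^TW^Tx = b^TW^Tx$ is the scalar transpose of the other; the linear part becomes $(A^T\nu-2Wb)^Tx = b^T\nu - 2x^TWb$; and adding $\eta=-b^T\nu$ makes everything cancel. This step is pure bookkeeping and needs no structural insight.

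For necessity, assume $f\equiv 0$ on $\Omega$, and note $\Omega\neq\emptyset$ because $\cal X\subseteq\Omega$ is nonempty; fix a particular solution $x_0$ with $Ax_0=b$ and let the (orthonormal) columns of $Z$ span $\ker A$, so $\Omega=\{x_0+Zt : t\in\R^{n-r}\}$. The hypothesis says the polynomial $t\mapsto f(x_0+Zt)$ vanishes identically, so I would extract its coefficients: the degree-$2$ term gives $Z^TSZ=0$, the degree-$1$ term gives $Z^T(2Sx_0+s)=0$, i.e. $2Sx_0+s\in\operatorname{range}(A^T)$, and the degree-$0$ term is just $f(x_0)=0$. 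I may assume $S$ symmetric without loss of generality, since only the symmetric part of $S$ enters $f$ while the target matrix $A^TW^T+WA$ is symmetric.

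The main obstacle is converting the intrinsic condition $Z^TSZ=0$ into the explicit factorization $S=WA+A^TW^T$. I would use the orthogonal splitting $\R^n=\operatorname{range}(A^T)\oplus\ker A$ with projector $\Pi=A^+A$ onto the row space and $\Pi_\perp=I-\Pi=ZZ^T$ onto the nullspace. Because $Z$ has full column rank, $Z^TSZ=0$ is equivalent to $\Pi_\perp S\Pi_\perp=0$, whence $S=\Pi S+\Pi_\perp S\Pi$. The columns of $\Pi S$ lie in $\operatorname{range}(A^T)$, so $\Pi S=A^TU$ for some $U\in\R^{m\times n}$; and since $\Pi=A^+A$, we get $\Pi_\perp S\Pi=(\Pi_\perp S A^+)A=VA$ with $V=\Pi_\perp SA^+\in\R^{n\times m}$. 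Thus $S=A^TU+VA$, and symmetrizing via $S=S^T$ and setting $W=\tfrac12(V+U^T)$ yields $S=A^TW^T+WA$. This projection-and-factorization argument carries the genuine content of the proof.

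Finally I would recover $s$ and $\eta$ from the $W$ just constructed, which I expect to be routine. From $2Sx_0+s=A^T\mu$, substituting $Sx_0=Wb+A^TW^Tx_0$ and setting $\nu=\mu-2W^Tx_0$ gives $s=A^T\nu-2Wb$. Then computing $x_0^TSx_0=2b^TW^Tx_0$ and $s^Tx_0=b^T\nu-2b^TW^Tx_0$ and using $f(x_0)=0$ collapses to $\eta=-x_0^TSx_0-s^Tx_0=-b^T\nu$, completing the argument.
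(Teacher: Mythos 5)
Your proof is correct. Note, though, that the paper does not actually prove this proposition: it simply cites Theorem~1 of Faye and Roupin \cite{fr:07}, so your argument supplies the content that the paper outsources. Your two directions both check out. Sufficiency is the routine cancellation you describe. For necessity, the parametrization $\Omega = \{x_0 + Zt\}$ correctly reduces the hypothesis to $Z^TSZ = 0$, $Z^T(2Sx_0+s)=0$, and $f(x_0)=0$, and your handling of the key step is sound: with $\Pi = A^+A$ and $\Pi_\perp = ZZ^T$, the identity $\Pi_\perp S \Pi_\perp = Z(Z^TSZ)Z^T = 0$ gives $S = \Pi S + \Pi_\perp S \Pi$, the range inclusion $\operatorname{range}(\Pi) = \operatorname{range}(A^T)$ yields $\Pi S = A^TU$, the factorization $\Pi_\perp S\Pi = (\Pi_\perp S A^+)A$ yields the $VA$ term, and symmetrizing with $W = \tfrac12(V+U^T)$ recovers $S = A^TW^T + WA$. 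The recovery of $\nu$ and $\eta$ from $x_0$ then closes the loop. Two points worth making explicit if you write this up: you correctly flag that $\Omega \neq \emptyset$ is needed for the \emph{only if} direction (otherwise the vanishing condition is vacuous), an assumption the paper leaves implicit; and the reduction to symmetric $S$ is harmless here but should be stated, since the target form $A^TW^T + WA$ is automatically symmetric. Relative to the paper, your route buys a self-contained, elementary argument at the cost of a page of linear algebra; the paper buys brevity at the cost of sending the reader to \cite{fr:07} for what is essentially the same projection-and-factorization idea.
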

\begin{proof}
This follows from Theorem 1 in~\cite{fr:07}.
\end{proof}
Following Proposition~\ref{prop:null_quad}, we 
assume without loss of generality that $S_j = A^TW_j^T + W_jA$ for some $W_j \in \R^{n \times m}$ in the rest of this section.
We will compare the relaxations in the class~\eqref{EIGRClass1} with the eigenvalue relaxation in the nullspace of $A$~\eqref{EIGRNS1} through the respective SDP formulations.  To this end, consider the SDP:
\begin{subequations}
\label{SDP_EIGRJ}
\begin{align}
\underset{x \in {\cal X}, X}{\text{min}}\;\; & \langle Q, X \rangle + q^T x \label{SDP_EIGRJ_obj} \\
\st\;\;     & X - x x^T \succcurlyeq 0 \label{SDP_EIGRJ_c3} \\
  			& \langle I_n, X \rangle - {\left( l + u\right)}^T x + l^T u \leq 0 \label{SDP_EIGRJ_c4} \\ 			 			
 			& \langle S_j, X \rangle + s_j^T x + \eta_j = 0, \; j \in \J. \label{SDP_EIGRJ_c5} 
\end{align}
\end{subequations}
The next proposition shows that SDP~\eqref{SDP_EIGRJ} is the dual of~\eqref{EIGRClass1}.
\begin{proposition}\label{prop:sdp_dual_J}
Let $\J \neq \emptyset$ be a set of quadratic functions satisfying (R\ref{recipe:step1}). The dual of the SDP~\eqref{SDP_EIGRJ} is given by~\eqref{EIGRClass1}.
\end{proposition}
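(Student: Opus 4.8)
The plan is to derive the dual of~\eqref{SDP_EIGRJ} as an immediate instance of Proposition~\ref{prop:SDP_dual}, and then invoke property (R\ref{recipe:step1}) to collapse the inner objective into the form appearing in~\eqref{EIGRClass1}. First I would observe that~\eqref{SDP_EIGRJ} is exactly of the shape of the generic SDP~\eqref{prop:sdp}: it minimizes $\langle Q, X\rangle + q^Tx$ over $x \in {\cal X}$ subject to $X - xx^T \succcurlyeq 0$, a single inequality constraint, and the $|\J|$ equality constraints indexed by $j \in \J$. Reading off the data, the inequality~\eqref{SDP_EIGRJ_c4} corresponds to $\bar{C}_1 = I_n$, $\bar{c}_1 = -(l+u)$, $\bar{d}_1 = l^Tu$ (so $p_1 = 1$), while each equality~\eqref{SDP_EIGRJ_c5} corresponds to $\hat{C}_j = S_j$, $\hat{c}_j = s_j$, $\hat{d}_j = \eta_j$ (so $m_1 = |\J|$), with $\hat{Q} = Q$ and $\hat{q} = q$.

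Applying Proposition~\ref{prop:SDP_dual} with this identification yields a dual of the form $\max\{\, \min_{x \in {\cal X}} x^T \hat{Q}_{\star} x + \hat{q}_{\star}^T x + \hat{d}_{\star} \,\}$, where the outer maximization ranges over a nonnegative multiplier for the inequality, which I rename $\alpha \in \R_{\geq 0}$, and free multipliers for the equalities, which I rename $\beta_j \in \R$ and collect into $\beta \in \R^{|\J|}$. Substituting the data gives the perturbed matrix $\hat{Q}_{\star} = Q + \alpha I_n + \sum_{j\in\J} \beta_j S_j = Q_{\alpha,\beta}$, the linear term $q_\alpha + \sum_{j\in\J}\beta_j s_j$, the constant $k_\alpha + \sum_{j\in\J}\beta_j \eta_j$, and the domain restriction $Q_{\alpha,\beta} \succcurlyeq 0$ together with $\alpha \geq 0$. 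These already match the feasible region of the outer maximization in~\eqref{EIGRClass1}.

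The decisive simplification comes from (R\ref{recipe:step1}): for every $j \in \J$ we have $f_j(x) = x^TS_jx + s_j^Tx + \eta_j = 0$ for all $x \in \Omega$, and since ${\cal X} \subseteq \Omega$ this holds throughout the inner feasible set. Consequently $\sum_{j\in\J}\beta_j f_j(x)$ vanishes identically on ${\cal X}$, so the inner objective $x^TQ_{\alpha,\beta}x + (q_\alpha + \sum_j\beta_j s_j)^Tx + (k_\alpha + \sum_j \beta_j\eta_j)$ reduces to $x^TQ_\alpha x + q_\alpha^Tx + k_\alpha$, exactly the integrand of~\eqref{EIGRClass1}. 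I would emphasize the asymmetry that makes this work: the multipliers $\beta_j$ drop out of the inner minimization objective but are retained in the semidefiniteness constraint $Q_{\alpha,\beta}\succcurlyeq 0$, precisely the feature already highlighted in the discussion following the recipe. With this substitution the dual is term-for-term identical to~\eqref{EIGRClass1}.

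The only real care needed is bookkeeping, so I expect the main (and mild) obstacle to be keeping the sign and role conventions straight: verifying that the inequality~\eqref{SDP_EIGRJ_c4} contributes the nonnegative multiplier $\alpha$ while the equalities~\eqref{SDP_EIGRJ_c5} contribute the free multipliers $\beta_j$, and that each data triple is matched with the correct sign so that $\hat{Q}_{\star}$ reduces to $Q_{\alpha,\beta}$ rather than a sign-flipped variant. Everything else is a direct transcription of Proposition~\ref{prop:SDP_dual} followed by the vanishing-on-${\cal X}$ argument.
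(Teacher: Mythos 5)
Your proof is correct and follows the same route as the paper: the paper's own proof is a one-line appeal to Proposition~\ref{prop:SDP_dual} with multipliers $\alpha \in \R_{\geq 0}$ for~\eqref{SDP_EIGRJ_c4} and $\beta_j \in \R$ for~\eqref{SDP_EIGRJ_c5}, which is precisely what you do. Your additional step of spelling out why the $\beta_j$ terms vanish from the inner objective on ${\cal X}$ (via (R\ref{recipe:step1})) while persisting in the constraint $Q_{\alpha,\beta} \succcurlyeq 0$ is exactly the detail the paper leaves implicit.
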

\begin{proof}
By dualizing the constraints~\eqref{SDP_EIGRJ_c4} and~\eqref{SDP_EIGRJ_c5} with the multipliers $\alpha \in \R_{\geq 0}$ and $\beta_{j} \in \R, j \in \J$, respectively, we can use Proposition~\ref{prop:SDP_dual} to obtain the claim.
\end{proof}

The next result shows that the feasible set of the 
SDP~\eqref{SDP_EIGRNS} is in general a subset of the feasible set of the SDP~\eqref{SDP_EIGRJ}.  Further, we provide conditions on the choice of quadratic functions in $\J$ so that equality holds. 
\begin{proposition}\label{prop:sdp_relation_Z_J}
Let ${\cal F}_{\emph{\text{SDP{\_}EIGZ}}}$ and ${\cal F}_{\emph{\text{SDP\_EIGJ}}}$ denote the feasible regions of the SDPs in~\eqref{SDP_EIGRNS} and~\eqref{SDP_EIGRJ}, respectively.  Then, the following holds:
\begin{enumerate}[(i)]
    \item ${\cal F}_{\emph{\text{SDP\_EIGZ}}} \subseteq {\cal F}_{\emph{\text{SDP\_EIGJ}}}$.
    \item If $\exists \, \omega_j, j \in \J$ such that $\sum_{j \in \J} \omega_j W_j = A^T$ then ${\cal F}_{\emph{\text{SDP\_EIGZ}}} = {\cal F}_{\emph{\text{SDP\_EIGJ}}}$.
\end{enumerate}
\end{proposition}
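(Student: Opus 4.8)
The plan is to exploit the fact that both SDPs minimize over $x \in {\cal X}$, so that every feasible point already satisfies $Ax = b$; this collapses the nonlinear pieces and reduces the comparison to the single matrix $M := X - xx^T$, which is positive semidefinite by \eqref{SDP_EIGRNS_c3} and \eqref{SDP_EIGRJ_c3}. Indeed, for any feasible $(x,X)$ we have $\|Ax-b\|^2 = 0$ and $f_j(x) = 0$ for all $j \in \J$ (since ${\cal X} \subset \Omega$), so the equality \eqref{SDP_EIGRNS_c5} becomes $\langle A^T A, X\rangle - 2(A^Tb)^Tx + b^Tb = \langle A^TA, M\rangle + \|Ax-b\|^2 = \langle A^TA, M\rangle = 0$, while each equality \eqref{SDP_EIGRJ_c5} becomes $\langle S_j, X\rangle + s_j^Tx + \eta_j = \langle S_j, M\rangle + f_j(x) = \langle S_j, M\rangle = 0$. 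Using the decomposition $S_j = A^TW_j^T + W_jA$ together with the symmetry of $M$, I would rewrite $\langle S_j, M\rangle = 2\,\text{tr}(AMW_j)$. Thus, modulo the common constraints \eqref{SDP_EIGRNS_c4}, \eqref{SDP_EIGRJ_c4} and the semidefiniteness of $M$, membership in ${\cal F}_{\text{SDP\_EIGZ}}$ is governed by $\langle A^TA, M\rangle = 0$ and membership in ${\cal F}_{\text{SDP\_EIGJ}}$ by $\text{tr}(AMW_j) = 0$ for all $j \in \J$.

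For part (i), I would take $(x,X) \in {\cal F}_{\text{SDP\_EIGZ}}$ and use $\langle A^TA, M\rangle = \text{tr}(AMA^T) = 0$. Since $M \succcurlyeq 0$, the matrix $AMA^T$ is positive semidefinite, so a vanishing trace forces $AMA^T = 0$ and hence $AM = 0$. Consequently $\text{tr}(AMW_j) = 0$ for every $j$, so all constraints \eqref{SDP_EIGRJ_c5} hold; as the remaining constraints are identical, the point lies in ${\cal F}_{\text{SDP\_EIGJ}}$, giving the inclusion ${\cal F}_{\text{SDP\_EIGZ}} \subseteq {\cal F}_{\text{SDP\_EIGJ}}$.

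For part (ii), under the hypothesis $\sum_{j\in\J}\omega_jW_j = A^T$ I would establish the reverse inclusion. Given $(x,X) \in {\cal F}_{\text{SDP\_EIGJ}}$ we have $\text{tr}(AMW_j) = 0$ for all $j$; forming the combination with weights $\omega_j$ yields $\text{tr}(AMA^T) = \text{tr}\!\big(AM\textstyle\sum_j \omega_j W_j\big) = \sum_j \omega_j\,\text{tr}(AMW_j) = 0$, which is precisely $\langle A^TA, M\rangle = 0$, i.e.\ \eqref{SDP_EIGRNS_c5}. Since the constraints \eqref{SDP_EIGRJ_c3}--\eqref{SDP_EIGRJ_c4} coincide with \eqref{SDP_EIGRNS_c3}--\eqref{SDP_EIGRNS_c4}, the point lies in ${\cal F}_{\text{SDP\_EIGZ}}$, and together with part (i) this gives equality of the feasible regions.

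The main obstacle is conceptual rather than computational: one must first recognize that $Ax=b$ is enforced on all of ${\cal X}$, so that both the quadratic term inside $\|Ax-b\|^2$ and the functions $f_j$ vanish and the problem becomes linear in $M$. After that, the delicate step is the rank argument in part (i) -- passing from the scalar identity $\langle A^TA, M\rangle = 0$ to the matrix identity $AM = 0$ -- which is exactly what is needed to annihilate $\text{tr}(AMW_j)$ for the arbitrary matrices $W_j$. Part (ii), by contrast, only requires the scalar consequence and so follows from a single linear combination once the reduction is in place.
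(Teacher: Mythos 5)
Your proof is correct and follows essentially the same route as the paper: reduce both equality constraints to linear conditions on $M = X - xx^T$ using $Ax=b$, derive part (i) from $\langle A^TA, M\rangle = 0$ together with $M \succcurlyeq 0$, and derive part (ii) by taking the $\omega_j$-weighted combination of the constraints \eqref{SDP_EIGRJ_c5}. The only cosmetic difference is that you phrase the key step as $\mathrm{tr}(AMA^T)=0 \Rightarrow AM=0$, whereas the paper states the equivalent fact that $M = ZVZ^T$ for a nullspace basis $Z$ of $A$; your version actually spells out the positive-semidefiniteness argument that the paper's ``Hence'' leaves implicit.
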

\begin{proof}
We start by proving (i).  Recall from~\eqref{SDP_EIGRNS_equivalence_1} that any $(\bar{x},\bar{X}) \in {\cal F}_{\text{SDP\_EIGZ}}$ satisfies $\langle A^TA, \bar{X} - \bar{x} \bar{x}^T \rangle = 0$.  Hence, $\bar{X}$ takes the form $\bar{X} = \bar{x} \bar{x}^T + ZVZ^T$ 
for all $(\bar{x},\bar{X}) \in {\cal F}_{\text{SDP\_EIGZ}}$, where $Z \in \R^{n \times n-r}$ is a basis for the null space of $A$ and $V \in \Sy^{n-r}$. For any $(\bar{x},\bar{X}) \in {\cal F}_{\text{SDP\_EIGZ}}$ it follows that for all $j \in \J$:
\begin{subequations}
\label{SDP_EIGRNS_EIGRJ_1}
\begin{align}
 \;\; & \langle S_j, \bar{X} \rangle + s_j^T \bar{x} + \eta_j  \label{SDP_EIGRNS_EIGRJ_1_c1} \\
=\;\; & \langle S_j, \bar{X} - \bar{x}\bar{x}^T \rangle + \bar{x}^T S_j \bar{x} + s_j^T \bar{x} + \eta_j  \label{SDP_EIGRNS_EIGRJ_1_c2} \\
=\;\; & \langle S_j, \bar{X} - \bar{x}\bar{x}^T \rangle = \langle A^TW_j^T + W_jA, ZVZ^T \rangle \label{SDP_EIGRNS_EIGRJ_1_c3} = 0
\end{align}
\end{subequations}
where~\eqref{SDP_EIGRNS_EIGRJ_1_c2} follows from adding 
and subtracting $\bar{x}^T S_j \bar{x}$, the first equality in~\eqref{SDP_EIGRNS_EIGRJ_1_c3} follows from~(R\ref{recipe:step1}), the second equality in~\eqref{SDP_EIGRNS_EIGRJ_1_c3} from Proposition~\ref{prop:null_quad} and the final equality due to $Z$ being a basis for the nullspace of $A$. Thus $(\bar{x},\bar{X}) \in {\cal F}_{\text{SDP\_EIGJ}}$ proving the claim in (i).

Consider the claim in (ii).  Suppose that there exist $\omega_j, j \in \J$ such that the condition in (ii) holds. We perform a linear combination of the inequalities in~\eqref{SDP_EIGRJ_c5} using $\omega_j$ to obtain for any $(\bar{x},\bar{X}) \in {\cal F}_{\text{SDP\_EIGJ}}$: 
\begin{subequations}
\label{SDP_EIGRNS_EIGRJ_2}
\begin{align}
0 = \;\; & \sum_{j \in \J} \omega_j \left(\langle S_j, \bar{X} \rangle + s_j^T \bar{x} + \eta_j \right)  \label{SDP_EIGRNS_EIGRJ_2_c1} \\
=\;\; & \sum_{j \in \J} \omega_j \left( \langle S_j, \bar{X} - \bar{x} \bar{x}^T \rangle + \bar{x}^T S_j \bar{x} + s_j^T \bar{x} + \eta_j \right) \label{SDP_EIGRNS_EIGRJ_2_c2} \\
=\;\; & \sum_{j \in \J} \omega_j\langle S_j, \bar{X} - \bar{x} \bar{x}^T \rangle = 2 \langle A^TA, \bar{X} - \bar{x} \bar{x}^T \rangle \label{SDP_EIGRNS_EIGRJ_2_c3}
\end{align}
\end{subequations}
where~\eqref{SDP_EIGRNS_EIGRJ_2_c2} follows from adding 
and subtracting $\bar{x}^T S_j \bar{x}$, the first equality in~\eqref{SDP_EIGRNS_EIGRJ_2_c3} follows from~(R\ref{recipe:step1}), the second equality in~\eqref{SDP_EIGRNS_EIGRJ_2_c3} from Proposition~\ref{prop:null_quad} and the condition in (ii). 
Thus $(\bar{x},\bar{X}) \in {\cal F}_{\text{SDP\_EIGZ}}$ proving the claim in (ii).
\end{proof}
Faye and Roupin~\cite{fr:07} proved the equivalence between the SDP~\eqref{SDP_EIGRNS} and a similar SDP where~\eqref{SDP_EIGRNS_c5} is replaced by the constraints derived by lifting the quadratic functions $x_j(A_{i \cdot} x-b_i) = 0$, $i = 1,\ldots,m$, $j = 1,\ldots,n$ to the space of $(x,X)$. Proposition~\ref{prop:sdp_relation_Z_J} considerably expands the set of quadratic functions for which the feasible set of the resulting SDP is equal to ${\cal F}_{\text{SDP\_EIGZ}}$ (claim in (ii)). It is easy to verify that all of the examples of quadratic functions satisfying (R\ref{recipe:step1}) described in~\cite{fr:07} do satisfy the condition in (ii).  Further, the claim in (i) shows that there exist no quadratic functions satisfying (R\ref{recipe:step1}) for which the resulting SDP can have a smaller feasible region than the SDP~\eqref{SDP_EIGRNS}. This brings us to the main result on the claim that the relaxation~\eqref{EIGRNS1} is indeed the best among the class of relaxations in~\eqref{EIGRClass1}.
\begin{theorem}
Suppose that $Z^T Q Z$ is indefinite and that the set $\J$ is chosen such that (R\ref{recipe:step1}) holds. Assume that $\alpha_z = -\lambda_{\emph{\text{min}}} (Z^T Q Z)$ in~\eqref{EIGRNS1}.  Denote by $\mu_{\emph{\text{EIGZ}}}$ and $\mu_{\emph{\text{EIGJ}}}$ the optimal objective function values in~\eqref{EIGRNS1} and~\eqref{EIGRClass1}, respectively. Then, $\mu_{\emph{\text{EIGJ}}} \leq \mu_{\emph{\text{EIGZ}}}$.
\end{theorem}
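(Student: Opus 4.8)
The plan is to establish the inequality by chaining together weak duality, a feasible-region inclusion, and the primal--dual equivalence already proved for the nullspace relaxation. Concretely, I would read $\mu_{\text{EIGJ}}$ as the optimal value of the \emph{dual} of a semidefinite program, bound it from above by the corresponding primal value, then bound that primal value by the optimal value of the tighter SDP~\eqref{SDP_EIGRNS}, and finally invoke Proposition~\ref{EIGRNS_SDP_EIGRNS_equivalence} to identify this last quantity with $\mu_{\text{EIGZ}}$.

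First I would invoke Proposition~\ref{prop:sdp_dual_J}, which identifies~\eqref{EIGRClass1} as the Lagrangian dual of the SDP~\eqref{SDP_EIGRJ}. Since~\eqref{SDP_EIGRJ} is a minimization problem, weak duality immediately yields $\mu_{\text{EIGJ}} \le \mu_{\text{SDP\_EIGJ}}$, where $\mu_{\text{SDP\_EIGJ}}$ denotes the optimal value of~\eqref{SDP_EIGRJ}. Note that only weak duality is needed for the bound we want, so no Slater-type argument is required at this step.

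Next I would compare the two SDPs. Both~\eqref{SDP_EIGRNS} and~\eqref{SDP_EIGRJ} minimize the identical objective $\langle Q, X \rangle + q^T x$, and Proposition~\ref{prop:sdp_relation_Z_J}(i) provides the inclusion ${\cal F}_{\text{SDP\_EIGZ}} \subseteq {\cal F}_{\text{SDP\_EIGJ}}$ of their feasible regions. Minimizing a fixed objective over a larger set can only decrease the optimal value, hence $\mu_{\text{SDP\_EIGJ}} \le \mu_{\text{SDP\_EIGZ}}$. Finally, because $Z^T Q Z$ is indefinite and $\alpha_z = -\lambda_{\text{min}}(Z^T Q Z)$, Proposition~\ref{EIGRNS_SDP_EIGRNS_equivalence}(ii) gives $\mu_{\text{SDP\_EIGZ}} = \mu_{\text{EIGZ}}$. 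Concatenating the three relations produces
\[
\mu_{\text{EIGJ}} \;\le\; \mu_{\text{SDP\_EIGJ}} \;\le\; \mu_{\text{SDP\_EIGZ}} \;=\; \mu_{\text{EIGZ}},
\]
which is the claim.

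I do not expect a genuine obstacle in this argument, since the substantive work has already been carried out in the supporting propositions; the only thing demanding care is tracking the directions of the inequalities (dual $\le$ primal for a minimization, and larger feasible region $\Rightarrow$ smaller minimum) so that they compose in the correct sense. One edge case worth a remark is $\J = \emptyset$, where Proposition~\ref{prop:sdp_dual_J} does not directly apply; in that degenerate case~\eqref{EIGRClass1} collapses to the eigenvalue relaxation and the bound $\mu_{\text{EIGJ}} \le \mu_{\text{EIGZ}}$ instead follows from Propositions~\ref{EIGR_GEIGR_comparison} and~\ref{GEIGR_EIGRNS_comparison}.
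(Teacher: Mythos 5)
Your proof is correct and follows exactly the same route as the paper's: weak duality via Proposition~\ref{prop:sdp_dual_J} gives $\mu_{\text{EIGJ}} \le \mu_{\text{SDP\_EIGJ}}$, the feasible-set inclusion of Proposition~\ref{prop:sdp_relation_Z_J}(i) gives $\mu_{\text{SDP\_EIGJ}} \le \mu_{\text{SDP\_EIGZ}}$, and Proposition~\ref{EIGRNS_SDP_EIGRNS_equivalence}(ii) closes the chain with $\mu_{\text{SDP\_EIGZ}} = \mu_{\text{EIGZ}}$. Your added remark on the $\J = \emptyset$ edge case is a sensible clarification the paper does not make explicit, but otherwise the arguments are identical.
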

\begin{proof}
Let $\mu_{\text{SDP\_EIGZ}}$ and $\mu_{\text{SDP\_EIGJ}}$ be the optimal objective values of~\eqref{SDP_EIGRNS} and~\eqref{SDP_EIGRJ}, respectively.  
By Proposition~\ref{EIGRNS_SDP_EIGRNS_equivalence}(ii) we have that $\mu_{\text{EIGZ}} = \mu_{\text{SDP\_EIGZ}}$.  
By Proposition~\eqref{prop:sdp_relation_Z_J}(i) we have that 
$\mu_{\text{SDP\_EIGJ}} \leq \mu_{\text{SDP\_EIGZ}}$.  
By Proposition~\ref{prop:sdp_dual_J} and weak duality we have that $\mu_{\text{EIGJ}} \leq \mu_{\text{SDP\_EIGJ}}$. 
Hence, $\mu_{\text{EIGJ}} \leq \mu_{\text{EIGZ}}$.  
\end{proof}
We finish this section by providing a theoretical comparison between the spectral relaxations studied in \S\ref{sec:eigrel}--\ref{sec:eigrel_ns} and some SDP relaxations described in \S\ref{sec:semidefinite_relaxations}.

\begin{theorem}
\label{Theorem_2}
Assume that $Z^T Q Z$ is indefinite. Suppose that $\alpha_e = -\lambda_{\emph{\text{min}}} (Q)$ in~\eqref{EIGR3}, $\alpha_g = -\lambda_{\emph{\text{min}}} (Q, I_n + A^T A)$ in~\eqref{GEIGR3}, and $\alpha_z  = -\lambda_{\emph{\text{min}}} (Z^T Q Z)$ in~\eqref{EIGRNS1}. Denote by $\mu_{\emph{\text{EIG}}}$, $\mu_{\emph{\text{GEIG}}}$, $\mu_{\emph{\text{EIGZ}}}$, $\mu_{\emph{\text{SDP\_d}}}$, $\mu_{\emph{\text{SDP\_dax}}}$, and $\mu_{\emph{\text{SDP\_da}}}$ the optimal objective function values of~\eqref{EIGR3}, \eqref{GEIGR3}, \eqref{EIGRNS1}, \eqref{SDP_d}, \eqref{SDP_d_a_x}, and \eqref{SDP_d_a}, respectively. Then, the following hold:

\begin{enumerate}[(i)]

\item $\mu_{\emph{\text{SDP\_d}}} \geq \mu_{\emph{\text{EIG}}}$.

\item $\mu_{\emph{\text{SDP\_dax}}} = \mu_{\emph{\text{SDP\_da}}} \geq \mu_{\emph{\text{EIGZ}}} \geq \mu_{\emph{\text{GEIG}}} \geq \mu_{\emph{\text{EIG}}}$.

\end{enumerate}

\end{theorem}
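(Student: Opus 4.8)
The plan is to route the entire comparison through the SDP reformulations of the three spectral relaxations and then chain together a pair of elementary feasible-region inclusions with the equivalence and comparison propositions already established. First I would record that since $Z^TQZ$ is indefinite, $Q$ is indefinite as well: any $w$ with $w^TZ^TQZw<0$ (resp. $>0$) yields the direction $Zw$ certifying a negative (resp. positive) curvature of $Q$. Hence the hypotheses of Propositions~\ref{EIGR_SDP_EIGR_equivalence} and~\ref{EIGRNS_SDP_EIGRNS_equivalence} hold, and because $\lambda_{\text{min}}(Q)<0$ and $\lambda_{\text{min}}(Z^TQZ)<0$, the prescribed choices $\alpha_e=-\lambda_{\text{min}}(Q)$ and $\alpha_z=-\lambda_{\text{min}}(Z^TQZ)$ coincide with $-\min(0,\cdot)$ of the respective eigenvalues. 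Part~(ii) of those two propositions then gives the identifications $\mu_{\text{EIG}}=\mu_{\text{SDP\_EIG}}$ and $\mu_{\text{EIGZ}}=\mu_{\text{SDP\_EIGZ}}$, where the right-hand sides are the optimal values of~\eqref{SDP_EIGR} and~\eqref{SDP_EIGRNS}.

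For part~(i), I would establish $\mathcal{F}_{\text{SDP\_d}}\subseteq\mathcal{F}_{\text{SDP\_EIG}}$, where these denote the feasible sets of~\eqref{SDP_d} and~\eqref{SDP_EIGR}. Both problems share the constraint $X-xx^T\succcurlyeq 0$ and the common objective $\langle Q,X\rangle+q^Tx$, and the only extra constraint in~\eqref{SDP_EIGR} is the aggregated inequality~\eqref{SDP_EIGR_c4}, i.e. $\langle I_n,X\rangle-(l+u)^Tx+l^Tu\leq 0$. Summing the per-coordinate constraint~\eqref{SDP_d_c4}, namely $X_{ii}\leq(l_i+u_i)x_i-l_iu_i$, over $i=1,\dots,n$ produces exactly this inequality. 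Thus every feasible point of~\eqref{SDP_d} is feasible in~\eqref{SDP_EIGR}, and minimizing the common objective over the smaller set gives $\mu_{\text{SDP\_d}}\geq\mu_{\text{SDP\_EIG}}=\mu_{\text{EIG}}$.

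For part~(ii), the equality $\mu_{\text{SDP\_dax}}=\mu_{\text{SDP\_da}}$ is already available from~\cite{fr:07}. The new inequality $\mu_{\text{SDP\_da}}\geq\mu_{\text{EIGZ}}$ follows from the same nesting argument applied to~\eqref{SDP_d_a} and~\eqref{SDP_EIGRNS}: besides $X-xx^T\succcurlyeq 0$, the SDP~\eqref{SDP_EIGRNS} has exactly two constraints, its equality~\eqref{SDP_EIGRNS_c5} being literally~\eqref{SDP_d_a_c2}, and its inequality~\eqref{SDP_EIGRNS_c4} again obtained by summing~\eqref{SDP_d_c4} over $i$. Hence $\mathcal{F}_{\text{SDP\_da}}\subseteq\mathcal{F}_{\text{SDP\_EIGZ}}$ and $\mu_{\text{SDP\_da}}\geq\mu_{\text{SDP\_EIGZ}}=\mu_{\text{EIGZ}}$. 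The remaining chain $\mu_{\text{EIGZ}}\geq\mu_{\text{GEIG}}\geq\mu_{\text{EIG}}$ is precisely the content of Propositions~\ref{GEIGR_EIGRNS_comparison} and~\ref{EIGR_GEIGR_comparison}, which apply verbatim under the stated choices of $\alpha_e,\alpha_g,\alpha_z$.

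All steps are routine once the SDP reformulations are invoked; I expect the only real obstacle to be bookkeeping rather than ideas. Specifically, I must keep the inequality directions straight—adding constraints shrinks the feasible region and therefore raises the minimum—and verify that the redundant lower-bound constraints~\eqref{SDP_d_c2}--\eqref{SDP_d_c3} present in~\eqref{SDP_d} and~\eqref{SDP_d_a} do not disturb the inclusions, since they only shrink $\mathcal{F}_{\text{SDP\_d}}$ and $\mathcal{F}_{\text{SDP\_da}}$ further and thus preserve the subset relations. The main conceptual care point is matching the single aggregated inequality used in the spectral SDPs against the per-coordinate McCormick inequalities of~\eqref{SDP_d} and~\eqref{SDP_d_a}, and ensuring the equivalence propositions are applied with the exact $\alpha$ values fixed in the hypothesis.
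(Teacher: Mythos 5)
Your proposal is correct and follows essentially the same route as the paper's proof: identify $\mu_{\text{EIG}}$ and $\mu_{\text{EIGZ}}$ with the optimal values of~\eqref{SDP_EIGR} and~\eqref{SDP_EIGRNS} via Propositions~\ref{EIGR_SDP_EIGR_equivalence}(ii) and~\ref{EIGRNS_SDP_EIGRNS_equivalence}(ii), observe that the aggregated inequality~\eqref{SDP_EIGR_c4} (resp.~\eqref{SDP_EIGRNS_c4}) is the sum of the McCormick inequalities~\eqref{SDP_d_c4} so that the feasible set of~\eqref{SDP_d} (resp.~\eqref{SDP_d_a}) is contained in that of the spectral SDP, and finish the chain with~\cite{fr:07} and Propositions~\ref{EIGR_GEIGR_comparison} and~\ref{GEIGR_EIGRNS_comparison}. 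Your explicit remark that indefiniteness of $Z^TQZ$ forces indefiniteness of $Q$ (so the equivalence propositions apply and the prescribed $\alpha$'s coincide with $-\min(0,\cdot)$) is a detail the paper leaves implicit, but it changes nothing of substance.
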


\begin{proof}
We start by proving (i). Denote by $\mu_{\text{SDP\_EIG}}$ the optimal objective function value in~\eqref{SDP_EIGR}. By Proposition~\ref{EIGR_SDP_EIGR_equivalence}(ii), we have that $\mu_{{\text{EIG}}} = \mu_{\text{SDP\_EIG}}$. Hence, we can prove (ii) by comparing the SDPs~\eqref{SDP_d} and~\eqref{SDP_EIGR}. The constraints~\eqref{SDP_d_c2} and~\eqref{SDP_d_c3} are implied by~\eqref{SDP_d_c1}, and as a result, can be droped from~\eqref{SDP_d}. Therefore, \eqref{SDP_d} and \eqref{SDP_EIGR} only differ in the constraints~\eqref{SDP_d_c4} and~\eqref{SDP_EIGR_c4}. It is simple to verify that the inequality~\eqref{SDP_EIGR_c4} can be obtained by aggregating the McCormick inequalities~\eqref{SDP_d_c4}, which implies that $\mu_{{\text{SDP\_d}}} \geq \mu_{\text{SDP\_EIG}} = \mu_{\emph{\text{EIG}}}$.

Now, we prove (ii). As stated in \S\ref{sec:separable_programming_relaxation}, the relationship $\mu_{{\text{SDP\_dax}}} = \mu_{{\text{SDP\_da}}}$ follows from a result given in~\cite{fr:07}. To show that $\mu_{{\text{SDP\_da}}} \geq \mu_{{\text{EIGZ}}}$, we follow the same line of arguments used for proving (i). Let $\mu_{\text{SDP\_EIGZ}}$ be the optimal objective function value in~\eqref{SDP_EIGRNS}. Proposition~\ref{EIGRNS_SDP_EIGRNS_equivalence}(ii) implies that $\mu_{{\text{EIGZ}}} = \mu_{\text{SDP\_EIGZ}}$. Therefore, to prove (ii), we can simply compare the SDPs~\eqref{SDP_d_a} and~\eqref{SDP_EIGRNS}. The constraints~\eqref{SDP_d_c2} and~\eqref{SDP_d_c3} are redundant in~\eqref{SDP_d_a}, and can be dropped from this formulation. Similar to the previous case, \eqref{SDP_d_a} and~\eqref{SDP_EIGRNS} only differ in the constraints~\eqref{SDP_d_c4} and~\eqref{SDP_EIGRNS_c4}. As stated above, the inequality~\eqref{SDP_EIGRNS_c4} is implied by the inequalities~\eqref{SDP_d_c4}. Hence, $\mu_{{\text{SDP\_da}}} \geq \mu_{\text{SDP\_EIGZ}} = \mu_{\emph{\text{EIGZ}}}$. The inequalities $\mu_{\emph{\text{GEIG}}} \geq \mu_{\emph{\text{EIG}}}$ and $\mu_{\emph{\text{EIGZ}}} \geq \mu_{\emph{\text{GEIG}}}$, follow from Propositions~\ref{EIGR_GEIGR_comparison} and~\ref{GEIGR_EIGRNS_comparison}, respectively.
\end{proof}

\section{Spectral branching for nonconvex binary QPs}
\label{spectral_branching}

In this section, we introduce new eigenvalue-based branching variable selection strategies for nonconvex binary QPs. These strategies are inspired by the strong branching rule which was initially proposed for mixed-integer linear programs~\cite{abcc:95}, and can be used along with the quadratic relaxations discussed in \S\ref{sec:eigrel}--\ref{sec:eigrel_ns}. For simplicity, we only describe our branching strategies for the eigenvalue relaxation, which rely on the smallest eigenvalue of $Q$ and its associated eigenvector. The branching rules for the quadratic relaxations described in \S\ref{sec:geigrel} and~\ref{sec:eigrel_ns} are similar, but they make use of the smallest generalized eigenvalue of the pair $(Q, I + \delta A^T A)$ and its corresponding eigenvector.

We first introduce some notation. Let $\F$ be the set of indices of the variables that are fixed at the current node. Denote by $\B = \{1, \dots, n\} \setminus \F$ the set of branching candidates. Let $\bar{Q}$ be the $\R^{|\B| \times |\B|}$ sub-matrix of $Q$ obtained by eliminating the rows and columns corresponding to the variables in $\F$. Define the bijection $\sigma : \B \rightarrow \{1, \ldots, |\B|\}$, which maps $i \in \B$ to the $\sigma(i)$-th row and $\sigma(i)$-th column of $\bar{Q}$.

Assume that we branch on variable $x_i$, $i \in \B$ by creating two nodes, one where $x_i = 0$ and another where $x_i = 1$. At these descendant nodes, the eigenvalue relaxation is constructed by considering the smallest eigenvalue of the submatrix obtained by eliminating the $\sigma(i)$-th row and $\sigma(i)$-th column of $\bar{Q}$. We denote this submatrix by $\hat{Q}$. In this context, a potentially good branching rule may consist in branching on the variable which leads to the largest increase in the smallest eigenvalue of $\hat{Q}$. Note that, at a given node of the branch-and-bound tree, this rule requires the solution of $|\B|$ eigenvalue problems, each one involving a submatrix of $\bar{Q}$ obtained by eliminating the row and column corresponding to a particular index $i \in \B$. We call this rule \textit{spectral branching with complete enumeration}. The index corresponding to this branching rule, denoted as $i_{\text{exact}} \in C$, can be mathematically expressed as:
\begin{equation}
\label{i_exact_definition}
\begin{aligned}
i_{\text{exact}} = \arg \max_{i \in \B} \lambda_{\text{min}} \left( P_{\sigma(i)} \bar{Q} P_{\sigma(i)}^T \right)
\end{aligned}
\end{equation}
where $P_{\sigma(i)}$ is a $(|\B|-1) \times |\B|$ matrix obtained by removing the $\sigma(i)$-th row from the $|\B| \times |\B|$ identity matrix. Note that $\hat{Q} = P_{\sigma(i)} \bar{Q} P_{\sigma(i)}^T$ results in a matrix where the $\sigma(i)$-th row and $\sigma(i)$-th column of $\bar{Q}$ are removed.  

The computational complexity of complete enumeration is $\Omega(|\B|^3)$. We are not aware of any efficient approach for obtaining $i_{\text{exact}}$ that avoids complete enumeration. We instead rely on a lower bound for $\lambda_{\text{min}}(\cdot)$ that will be obtained without computing an eigenvalue and is computationally inexpensive. Gershgorin's Circle Theorem (GCT)~\cite{gvl96} provides such a lower bound estimate. The GCT states that: \emph{every eigenvalue of a $t \times t$ matrix $T$ lies in one of the circles ${\cal C}_k(T) = \{ \lambda \; : \; |\lambda - T_{kk}| \leq \sum_{l \neq  k} |T_{kl}| \}$ for $k = 1,\ldots, t$}. A lower bound estimate for the smallest eigenvalue of the matrix $T$ based on the GCT, denoted as $\underline{\lambda}^{\text{GCT}}_{\text{min}}(T)$ is:
\begin{equation}
\label{lambda_GCT}
\begin{aligned}
\underline{\lambda}^{\text{GCT}}_{\text{min}}(T) = \min\limits_{k \in \{1, \ldots, t\}} \left( T_{kk} - \sum\limits_{l \neq k} |T_{kl}|\right)
\end{aligned}
\end{equation}

Using the GCT-based lower bound estimate we can then define a branching variable index as:
\begin{equation}
\label{i_GCT_definition}
\begin{aligned}
i_{\text{GCT}} = \arg\max\limits_{i \in \B} \underline{\lambda}^{\text{GCT}}_{\text{min}} \left(P_{\sigma(i)} \bar{Q} P_{\sigma(i)}^T \right). 
\end{aligned}
\end{equation}
Note that the index $i_{\text{GCT}}$ can be determined without having to compute the matrix $P_{\sigma(i)} \bar{Q} P_{\sigma(i)}^T$. This approach has a computational complexity of $O(|\B|^2)$ and is computationally inexpensive compared to complete enumeration.  

The choice of $i_{GCT}$ can be viewed as a pessimistic estimate since it is obtained by maximizing the worst-case bound for the smallest eigenvalue.  Instead, we employ a different approach to determine the branching variable. Let $v$ be the eigenvector corresponding to the smallest eigenvalue of $\bar{Q}$. Then, we select as a branching variable, denoted by $i_{\text{approx}}$, the one which corresponds to the entry of $v$ with the largest absolute value, i.e. 
\begin{equation}
\label{i_approx_definition}
\begin{aligned}
i_{\text{approx}} = \arg\max_{i \in \B} |v_{\sigma(i)}|
\end{aligned}
\end{equation}
where $v_{\sigma(i)}$ denotes the $\sigma(i)$-th component of $v$. We call this rule \textit{approximate spectral branching}.  The computational complexity of this rule is $O(|\B|)$.

To appreciate the intuition behind this choice, we recall the proof for the GCT.  From the definition of the eigenvalue, we have 
\begin{subequations}
\label{i_approx_intuition}
\begin{align}
&	\bar{Q} v = \lambda_{\text{min}}(\bar{Q}) v \notag \\
\implies & \sum\limits_{j \in \B} \bar{Q}_{ \sigma(i_{\text{approx}}) \, \sigma(j)} v_{\sigma(j)} = \lambda_{\text{min}}(\bar{Q}) v_{\sigma(i_{\text{approx}})} \notag \\
\implies &	\lambda_{\text{min}}(\bar{Q}) - Q_{\sigma(i_{\text{approx}}) \, \sigma(i_{\text{approx}})} = \sum\limits_{\substack{j \in \B, \notag \\
 j \neq i_{\text{approx}}}} \bar{Q}_{\sigma(i_{\text{approx}}) \, \sigma(j)} \frac{v_{\sigma(j)}}{v_{\sigma(i_{\text{approx}})}} \notag \\ 
\implies &	|\lambda_{\text{min}}(\bar{Q}) - Q_{\sigma(i_{\text{approx}}) \, \sigma(i_{\text{approx}})}| 
 \leq \sum\limits_{\substack{j \in \B, \\ j \neq i_{\text{approx}}}} | \bar{Q}_{\sigma(i_{\text{approx}}) \, \sigma(j)}| \notag \\
\implies & \lambda_{\text{min}}(\bar{Q}) \in {\cal C}_{\sigma(i_{\text{approx}})}(\bar{Q}) \notag
\end{align}
\end{subequations}
where the first implication follows from the $\sigma(i_{\text{approx}})$-th row of the equality, the second implication is obtained by rearranging and dividing by $v_{\sigma(i_{\text{approx}})}$ and the inequality follows from $|v_{\sigma(j)}/ v_{\sigma(i_{\text{approx}})}| \leq 1$ by definition of $i_{\text{approx}}$. In essence, $i_{\text{approx}}$ identifies the particular Gershgorin circle that bounds the 
smallest eigenvalue $\lambda_{\text{min}}(\bar{Q})$.  Thus, the choice of $i_{\text{approx}}$ as the branching variable can be interpreted as eliminating the particular Gershgorin circle to which $\lambda_{\text{min}}(\bar{Q})$ belongs.  In that sense, this can be viewed as an optimistic estimate.

To illustrate the effectiveness of $i_{\text{GCT}}$ and $i_{\text{approx}}$ in mimicking $i_{\text{exact}}$, we performed some numerical experiments.  We generated matrices $Q$ of sizes $n \in \{50,100\}$ and densities $\rho \in \{0.25, 0.50, 1.00 \}$, and computed $i_{\text{exact}}$ by complete enumeration. Denote by $i_{\text{worst}}$ the index corresponding to the worst choice of branching variable, i.e.:
\begin{equation}
\label{i_worst_definition}
\begin{aligned}
i_{\text{worst}} = \arg \min_{i \in \B} \lambda_{\text{min}} \left(P_{\sigma(i)} Q P_{\sigma(i)}^T \right) 
\end{aligned}
\end{equation}
Then, the effectiveness of $i_{\text{x}}$ is measured using the metric:
\begin{equation}
\label{effectiveness_metric}
\begin{aligned}
	\% \text{ gap} = \frac{ \lambda_{\text{min}} \left(P_{\sigma(i_{\text{x}})} Q  P_{\sigma(i_{\text{x}})}^T \right) 
										- \lambda_{\text{min}}\left(P_{\sigma(i_{\text{exact}})} Q  P_{\sigma(i_{\text{exact}})}^T\right)  }{
									\lambda_{\text{min}}\left(P_{\sigma(i_{\text{worst}})} Q  P_{\sigma(i_{\text{worst}})}^T\right) 
										- \lambda_{\text{min}}\left(P_{\sigma(i_{\text{exact}})} Q  P_{\sigma(i_{\text{exact}})}^T\right) }
							\times 100
\end{aligned}
\end{equation}
where $\text{x} \in \{\text{approx},\text{GCT}\}$. A smaller value of $\%$ gap for $i_{\text{x}}$ represents a better approximation of $i_{\text{exact}}$. To obtain a statistic of the effectiveness of these approaches, we generated 100 different instances of $Q$ for each matrix size and density. Figure~\ref{cumulative_plots_branching} shows cumulative plots of the percentage of instances for which the $\%$ gap is below a certain value. It is evident from the plots that the approximate spectral branching strategy is a better choice than the GCT-based branching rule.

\begin{figure}[htp]
\centering
\subfloat[Instances of $Q$ with $n = 50$]{%
  \includegraphics[scale=0.15]{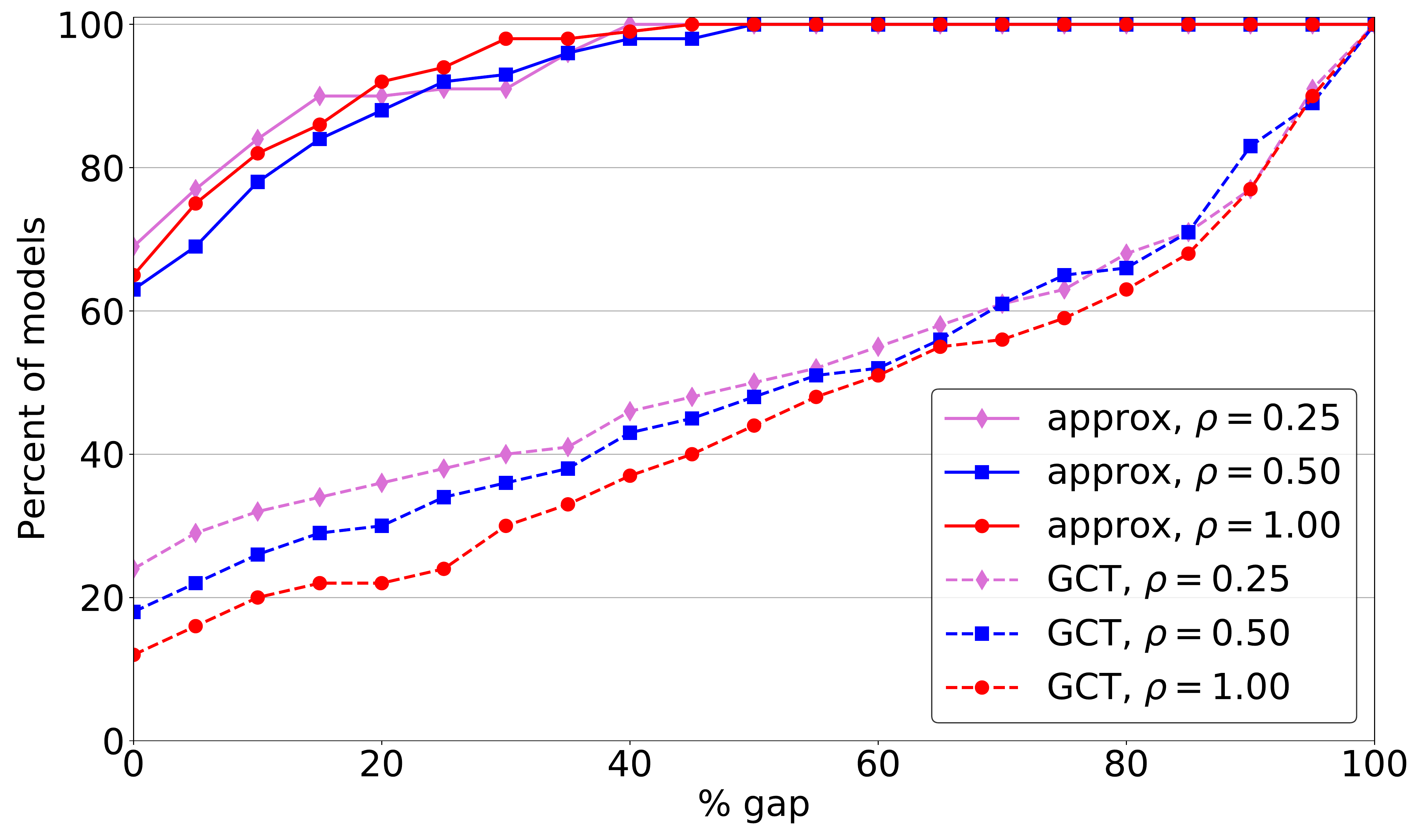}%
}
\subfloat[Instances of $Q$ with $n = 100$]{%
  \includegraphics[scale=0.15]{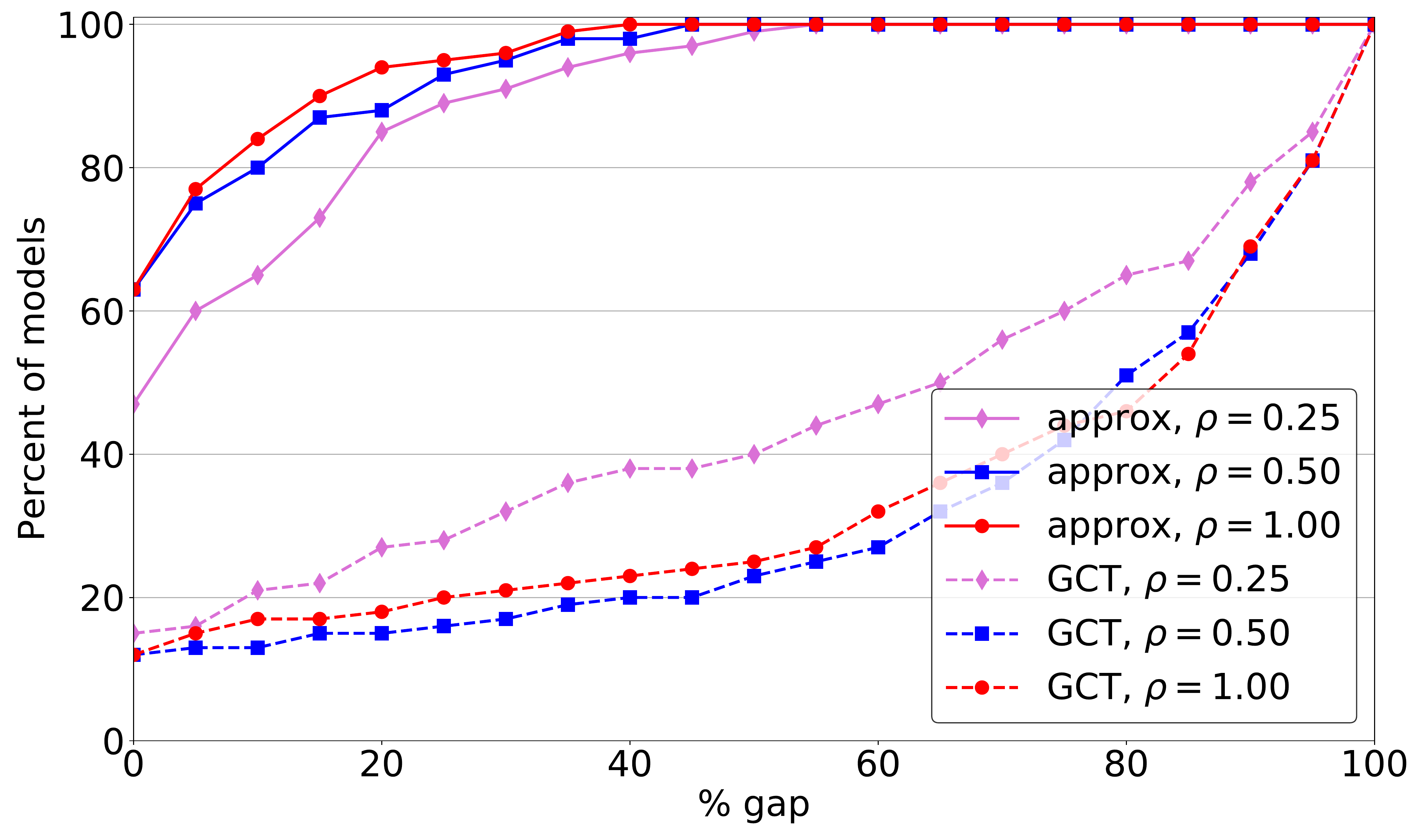}%
}
\caption{Cumulative plots comparing the effectiveness of the approximate spectral branching and the GCT-based branching strategies.}
\label{cumulative_plots_branching}
\end{figure}

\section{Implementation of the proposed relaxation and branching strategies in BARON}
\label{implementation}

By default, BARON's portfolio of relaxations consists of linear programming (LP), nonlinear programming (NLP) and mixed-integer linear programming (MILP) relaxations~\cite{ks:18,ks19,ts:comp:04}. In our implementation, we have expanded this portfolio by adding a new class of convex QP relaxations. These relaxations are constructed whenever the original model supplied to BARON is of the form~(\ref{problem_statement}). We take advantage of BARON's convexity detector (see~\cite{ks:18} for details) in order to determine the type of QP relaxation that will be constructed at a given node in the branch-and-bound tree. If the current node is convex, our QP relaxation is the continuous relaxation of~\eqref{problem_statement} subject to the variable bounds of the current node. On the other hand, if the current node is nonconvex, we construct one of the QP relaxations introduced in \S\ref{sec:eigrel}--\ref{sec:eigrel_ns}. The relaxation~\eqref{GEIGR_delta_2} is selected by default if the original problem contains equality constraints. Otherwise, our QP relaxation constructor automatically switches to the eigenvalue relaxation~\eqref{EIGR3}.

To solve the eigenvalue and generalized eigenvalue problems that arise during the construction of the relaxations discussed in \S\ref{sec:eigrel}--\ref{sec:eigrel_ns}, we use the subroutines included in the linear algebra library LAPACK~\cite{1999lapack}. When constructing these quadratic relaxations, we only consider the variables that have not been fixed at the current node. We use CPLEX as a subsolver for the new QP relaxations. The relaxation solution returned by the QP subsolver is used at the current node only if it satisfies the KKT conditions. This KKT test is similar to the optimality checks that BARON performs on the solutions returned by the LP and NLP subsolvers (see~\cite{ks:18} for details).

Another important component of our implementation is the approximate spectral branching rule described in \S\ref{spectral_branching}. This strategy is activated whenever the original problem supplied to BARON is a nonconvex binary QP. When this strategy is disabled, BARON uses reliability branching~\cite{akm:05} to select among binary branching variables.

\subsection*{Finding $\delta$}
When constructing the quadratic relaxation~\eqref{GEIGR_delta_2}, we use a sufficiently large value of $\delta$ to obtain a good approximation of the bound given by the eigenvalue relaxation in the nullspace of $A$. We use an iterative procedure to determine such value of $\delta$. We start by setting $\delta = 1$ and computing $\lambda_{\text{min}} (Q, I_n + \delta A^T A)$. Then, in each iteration, we increase $\delta$ by a factor of $\sigma$ and we use the resulting $\delta$ to compute a new value of $\lambda_{\text{min}} (Q, I_n + \delta A^T A)$. The procedure terminates when either the relative change in $\lambda_{\text{min}} (Q, I_n + \delta A^T A)$ is within a tolerance $relTol$ or the number of iterations reaches $maxIter$. In our experiments, we set $\sigma = 10$,  $maxIter = 5$, and $relTol = 10^{-3}$. This procedure is executed at the root node only, and the value of $\delta$ determined during its execution is used throughout the entire branch-and-bound tree.

\subsection*{Dynamic relaxation selection strategy}

We have implemented a dynamic relaxation selection strategy which is used for problems of the form~(\ref{problem_statement}) and switches between polyhedral and quadratic relaxations based on their relative strength. This dynamic strategy is motivated by two key observations. First, the strength of a given relaxation may depend on particular characteristics of the problem under consideration. Second, a particular type of relaxation may become stronger than other classes of relaxations as we move down the branch-and-bound tree.

We dynamically adjust the frequencies at which we solve the different types of relaxations during the branch-and-bound search. Denote by $\omega_{lp} \in [1, \bar{\omega}_{lp}]$ and $\omega_{qp} \in [1, \bar{\omega}_{qp}]$ the frequencies with which we solve the LP and QP relaxations, respectively. Let $f_{lp}$ and $f_{qp}$ be the optimal objective function values of the LP and QP relaxations, respectively. At the beginning of the global search, we set $\omega_{lp} = 1$ and $\omega_{qp} = 1$, which indicates that both the LP and QP relaxations will be solved at every node of the branch-and-bound tree. At nodes where both LP and QP relaxations are solved, we compare their corresponding objective function values. If $f_{qp} - f_{lp} \geq absTol$, we increase $\omega_{qp}$ and decrease $\omega_{lp}$ by setting $\omega_{qp} = \max \left(1, \omega_{qp} / \sigma_{qp} \right)$ and $\omega_{lp} = \min \left(\bar{\omega}_{lp}, \omega_{lp} \cdot \sigma_{lp} \right)$. Conversely, if $f_{qp} - f_{lp} < absTol$, we increase $\omega_{lp}$ by setting $\omega_{lp} = \max \left(1, \omega_{lp} / \sigma_{lp} \right)$, and decrease $\omega_{qp}$ by setting $\omega_{qp} = \min \left(\bar{\omega}_{qp}, \omega_{qp} \cdot \sigma_{qp} \right)$. In our experiments, we set $\sigma_{lp} = 10$, $\sigma_{qp} = 2$, $\bar{\omega}_{lp} = 1000$, $\bar{\omega}_{qp} = 10$, and $absTol = 10^{-3}$.

Although BARON also makes use of MILP relaxations, in our dynamic relaxation selection strategy, we only compare the bounds given by the LP and QP relaxations. As MILP relaxations can be expensive, BARON uses a heuristic to decide if an MILP relaxation will be solved at the current node~\cite{ks19}. In our implementation, this heuristic is invoked only if at the current node the QP relaxation is weaker than the LP relaxation. If the converse is true, the MILP relaxation is skipped altogether.

\section{Computational results}
\label{computational_results}

In this section, we present the results of a computational study conducted to investigate the impact of the techniques proposed in this paper on the performance of branch-and-bound algorithms. We start in \S\ref{sec:results_relaxations} with a numerical comparison between the spectral relaxations introduced in \S\ref{sec:eigrel}--\ref{sec:eigrel_ns} and some relaxations reviewed in \S\ref{polyhedral_sdp_quadratic_relaxations}. In \S\ref{sec:results_baron}, we analyze the impact of the implementation described in \S\ref{implementation} on the performance of the global optimization solver BARON. Then, in \S\ref{sec:results_all_solvers}, we compare several state-of-the-art global optimization solvers. Finally, in \S\ref{sec:results_comparison_with_qcr}, we compare BARON and the QCR approach discussed in \S\ref{sec:qcr_based_relaxations}.

Throughout this section, all experiments are conducted under GAMS 30.1.0 on a 64-bit Intel Xeon X5650 2.66GHz processor with a single-thread. We solve all problems in minimization form. For the experiments described in \S\ref{sec:results_relaxations}, the linear and convex quadratic programs are solved using CPLEX 12.10, whereas the SDPs are solved using MOSEK 9.1.9. For the experiments considered in \S\ref{sec:results_baron}--\ref{sec:results_comparison_with_qcr}, we consider the following global optimization solvers: ANTIGONE 1.1, BARON 19.12, COUENNE 0.5, CPLEX 12.10, GUROBI 9.0, LINDOGLOBAL 12.0 and SCIP 6.0. When dealing with nonconvex problems, we: (i) run all solvers with relative and absolute optimality tolerances of 10\textsuperscript{-6} and a time limit of 500 seconds, and (ii) set the CPLEX option {\tt optimalitytarget} to 3 and the GUROBI option {\tt nonconvex} to 2 to ensure that these two solvers search for a globally optimal solution. For other algorithmic parameters, we use default settings. The computational times reported in our experiments do not include the time required by GAMS to generate problems and interface with solvers; only times taken by the solvers are reported.

For our experiments, we use a large test set consisting of 960 Cardinality Binary Quadratic Programs (CBQPs), 30 Quadratic Semi-Assignment Problems (QSAPs), 246 Box-Constrained Quadratic Programs (BoxQPs), and 315 Equality Integer Quadratic Programs (EIQPs). These test libraries are described in detail in~\cite{qp_miqp_tests}.

\subsection{Comparison between relaxations}
\label{sec:results_relaxations}

In this section, we provide a comparison between the spectral relaxations introduced in \S\ref{sec:eigrel}--\ref{sec:eigrel_ns}, the convex quadratic relaxation~\eqref{EIGDC2}, the first-level RLT relaxation~\eqref{RLT_1}, and the SDP relaxations~\eqref{SDP_d} and~\eqref{SDP_d_a}. We construct performance profiles based on the root-node relaxation gap:
\begin{equation}
\label{optimality_gap}
\text{GAP} = \left(\dfrac{f_{UBD} - f_{LBD}}{\max(|f_{LBD}|, 10^{-3})}\right) \times 100
\end{equation}
where $f_{LBD}$ is the root-node relaxation lower bound, and $f_{UBD}$ is the best upper bound available for a given instance. The following notation is used to refer to the different relaxations:

\begin{itemize}

\item EIG: Eigenvalue relaxation~\eqref{EIGR3}.

\item GEIG: Generalized eigenvalue relaxation~\eqref{GEIGR3}.

\item EIGNS: Eigenvalue relaxation in the nullspace of $A$~\eqref{EIGRNS1}.

\item EIGDC: Quadratic relaxation~\eqref{EIGDC2} based on the eigdecomposition of $Q$.

\item RLT: First-level RLT relaxation~\eqref{RLT_1}.

\item SDPd: SDP relaxation~\eqref{SDP_d}.

\item SDPda: SDP relaxation~\eqref{SDP_d_a}.

\end{itemize}

Performance profiles are presented in Figures~\ref{fig:root_node_gaps_CBQP}--\ref{fig:root_node_gaps_EIQP}. These profiles show the percentage of models for which the gap defined in~\eqref{optimality_gap} is below a certain threshold. As seen in the figures, the SDP relaxations give the tighter bounds, followed by the spectral relaxations. For these instances, both the RLT relaxation~\eqref{RLT_1} and the quadratic relaxation~\eqref{EIGDC2} provide relatively weak bounds.

\begin{figure}[htp]
\centering
\subfloat[960 CBQP instances.\label{fig:root_node_gaps_CBQP}]{%
  \includegraphics[scale=0.15]{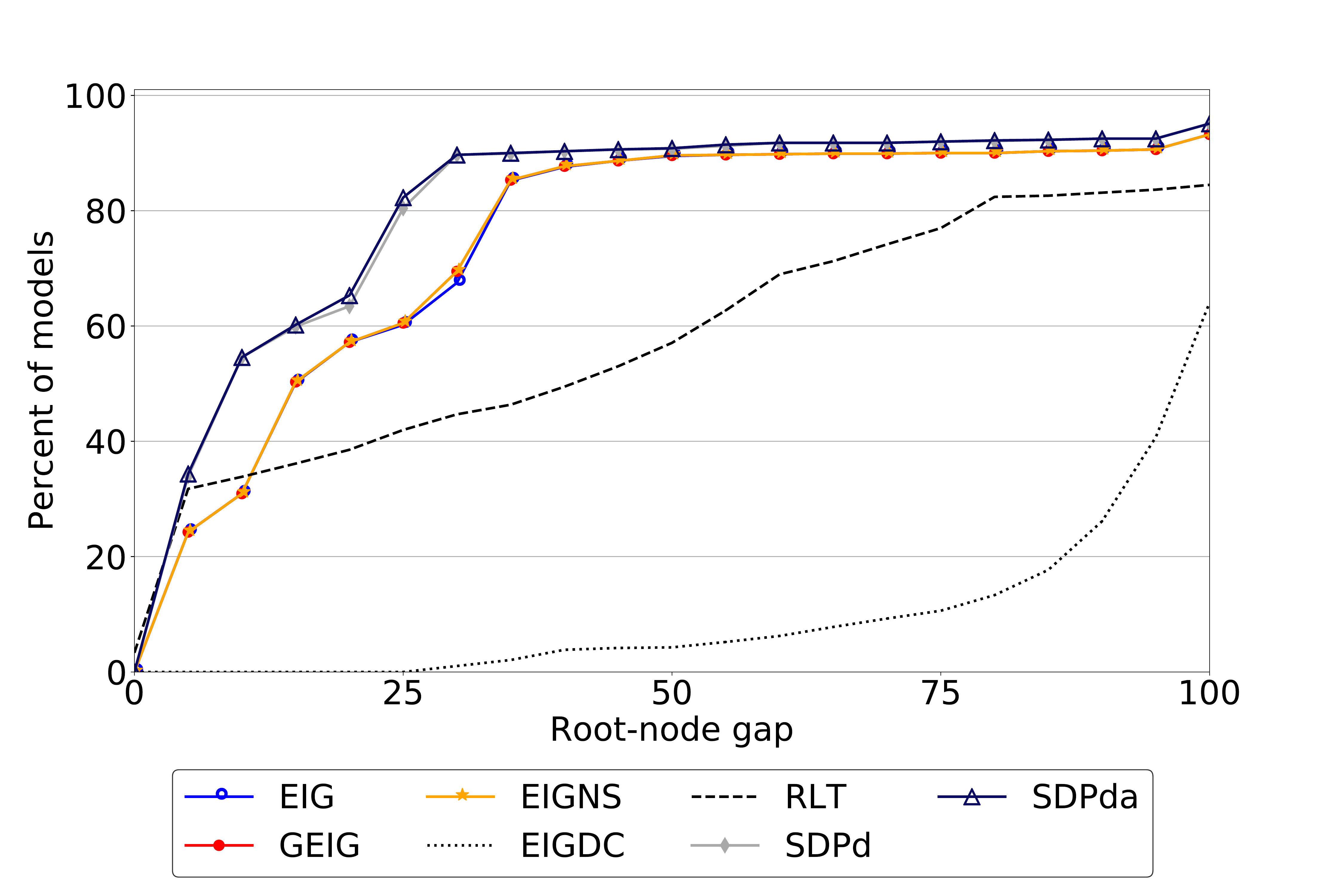}%
}
\subfloat[30 QSAP instances.\label{fig:root_node_gaps_QSAP}]{%
  \includegraphics[scale=0.15]{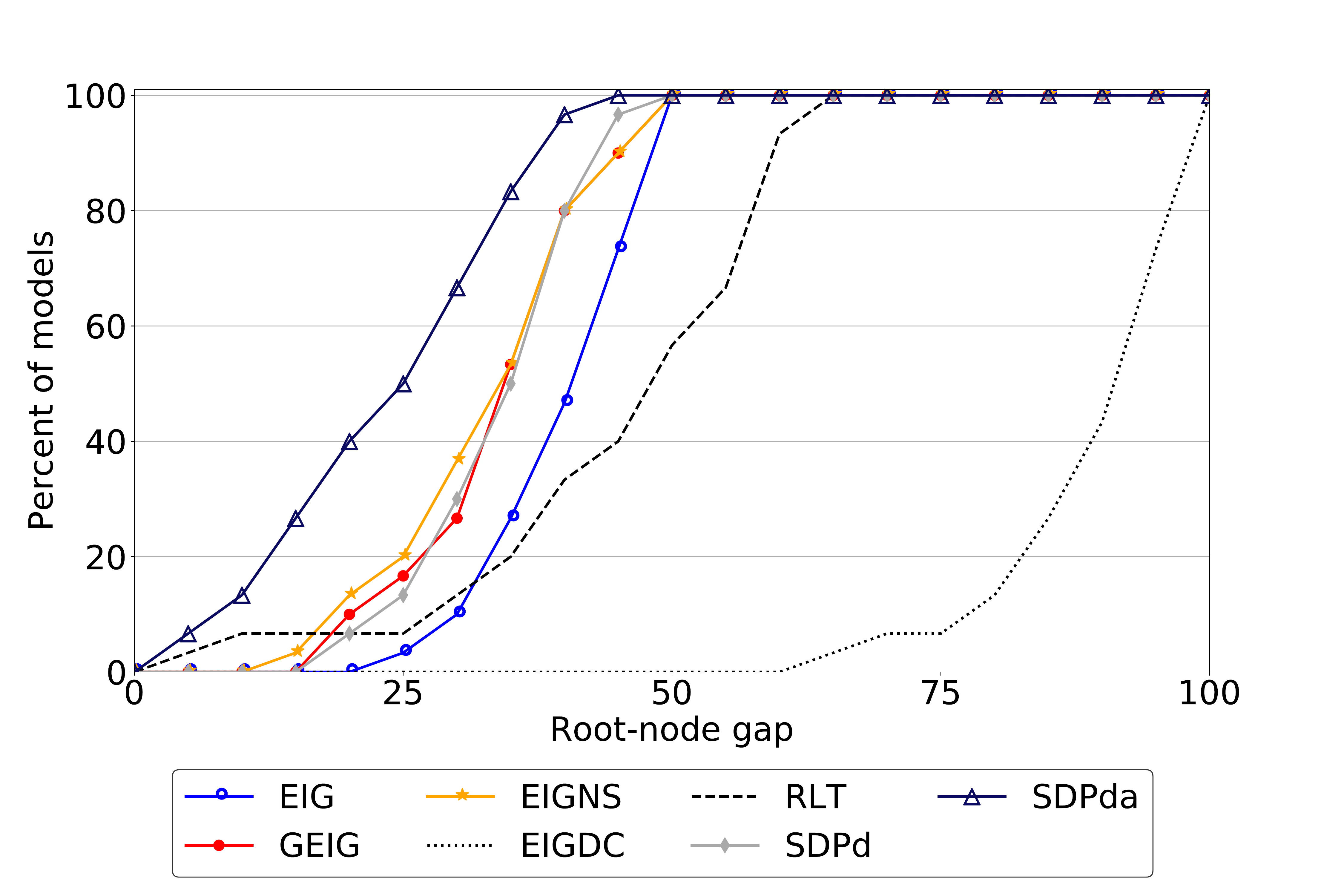}%
} \\
\subfloat[246 BoxQP instances.\label{fig:root_node_gaps_BoxQP}]{%
  \includegraphics[scale=0.15]{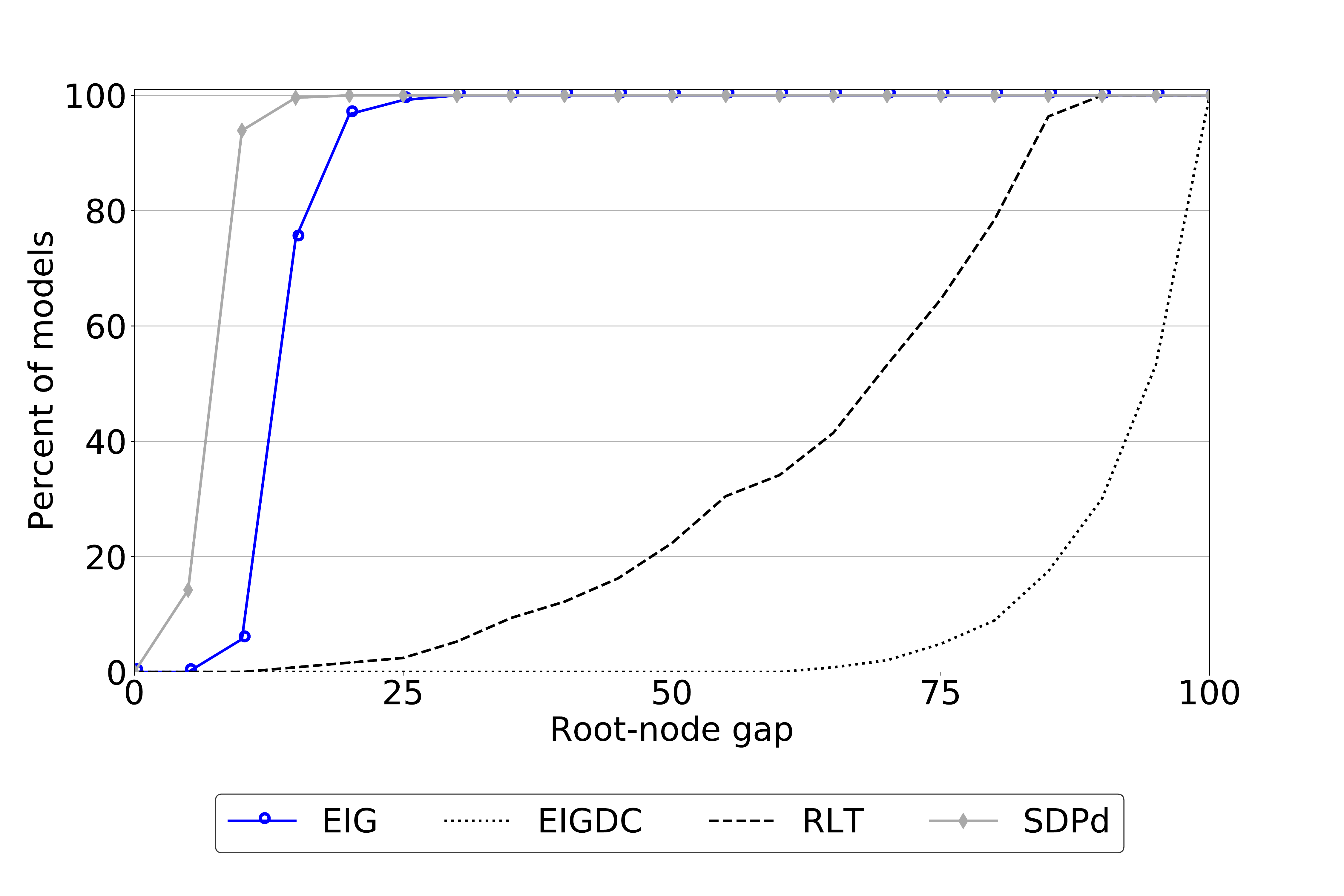}%
}
\subfloat[315 EIQP instances.\label{fig:root_node_gaps_EIQP}]{%
  \includegraphics[scale=0.15]{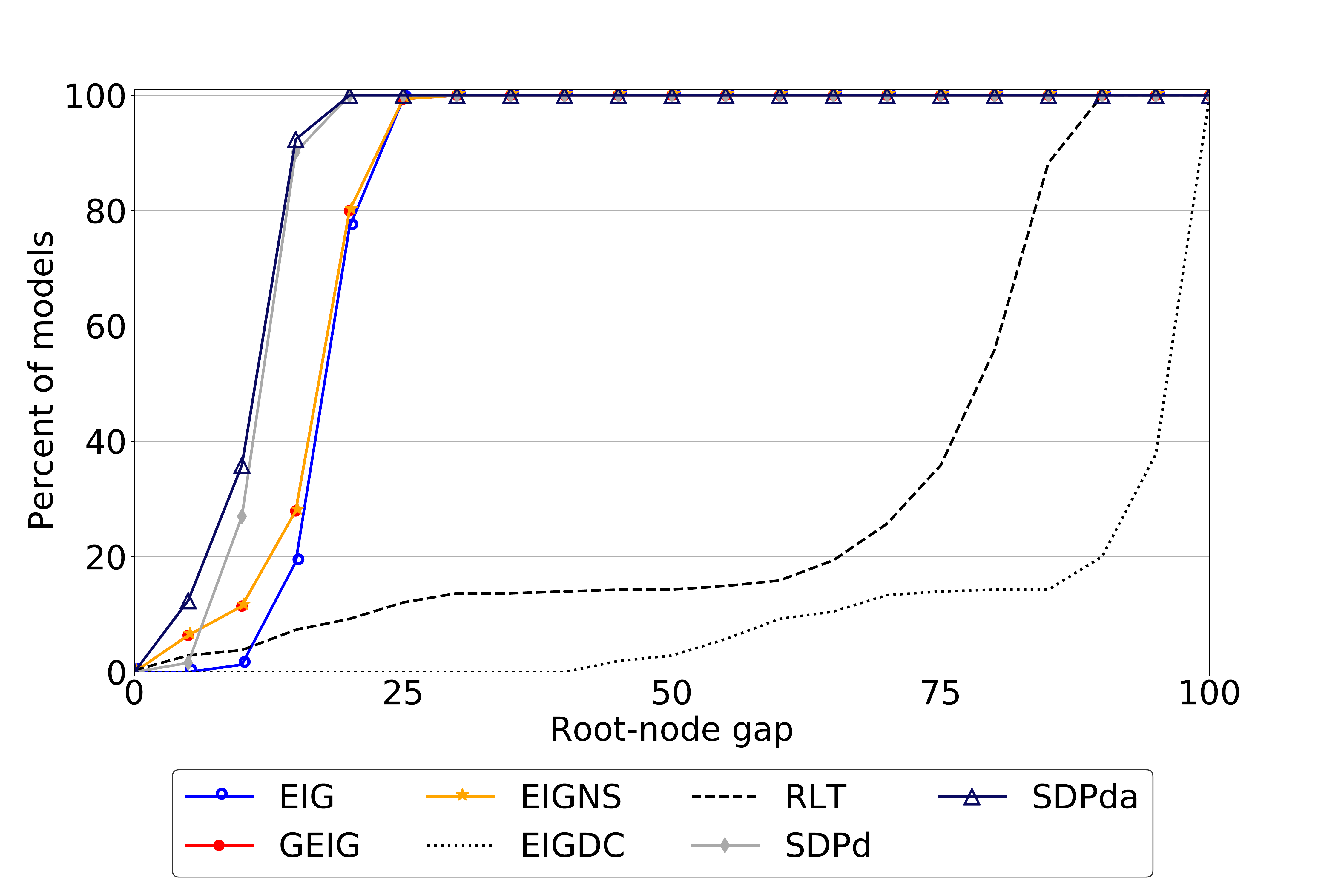}%
}
\caption{Comparison between the root-node relaxations gaps.}
\label{fig:root_node_gaps}
\end{figure}

We also compare these root-node relaxations in terms of their solution times. To that end, in Figures~\ref{fig:root_node_times_CBQP}--\ref{fig:root_node_times_EIQP}, we present the geometric means of the CPU times required to solve the different classes of relaxations. For the quadratic relaxation based on the eigdecomposition of $Q$, the CPU time includes the time required to solve the convex QP~\eqref{EIGDC2} and the time taken to solve the LPs used to determine the bounds on the $y_i$ variables. We group the instances based on their size. As the figures indicate, the spectral relaxations are relatively inexpensive regardless of the characteristics of the problem. As the size of the problem increases, the RLT relaxations become more expensive to solve, and in some cases, these RLT relaxations are orders of magnitude more expensive than the other relaxations. Note that the separable programming procedure described in \S\ref{sec:separable_programming_relaxation} does not only lead to relatively weak bounds, but it is also computationally expensive since it requires the solution of $2n$ linear programs. Even though for most of the problems considered in the experiments the SDP relaxations can be solved within 10 seconds, they are between one and two orders of magnitude more expensive than the spectral relaxations.

\begin{figure}[htp]
\centering
\subfloat[960 CBQP instances.\label{fig:root_node_times_CBQP}]{%
  \includegraphics[scale=0.18]{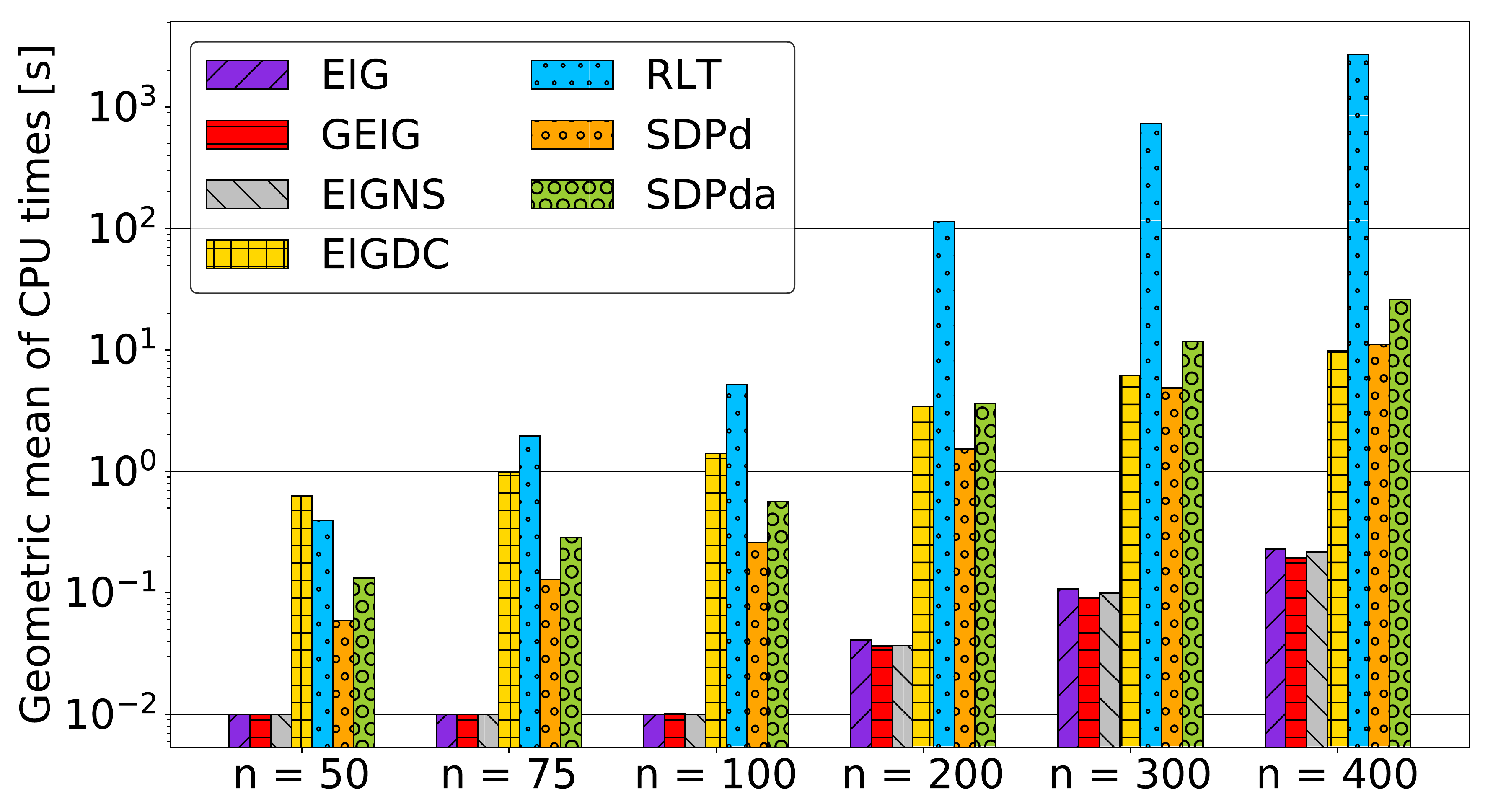}%
}
\subfloat[30 QSAP instances.\label{fig:root_node_times_QSAP}]{%
  \includegraphics[scale=0.18]{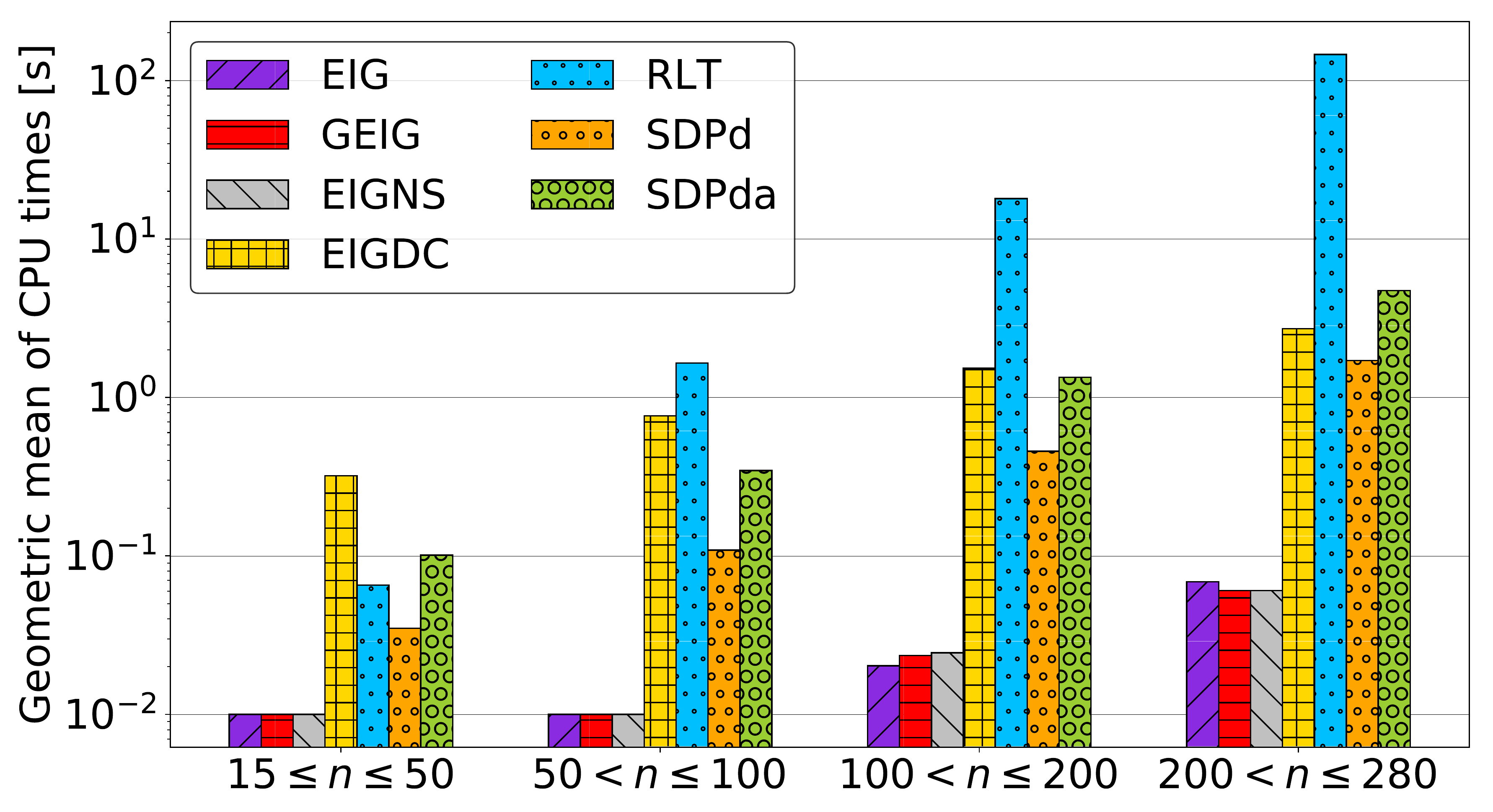}%
} \\
\subfloat[246 BoxQP instances.\label{fig:root_node_times_BoxQP}]{%
  \includegraphics[scale=0.18]{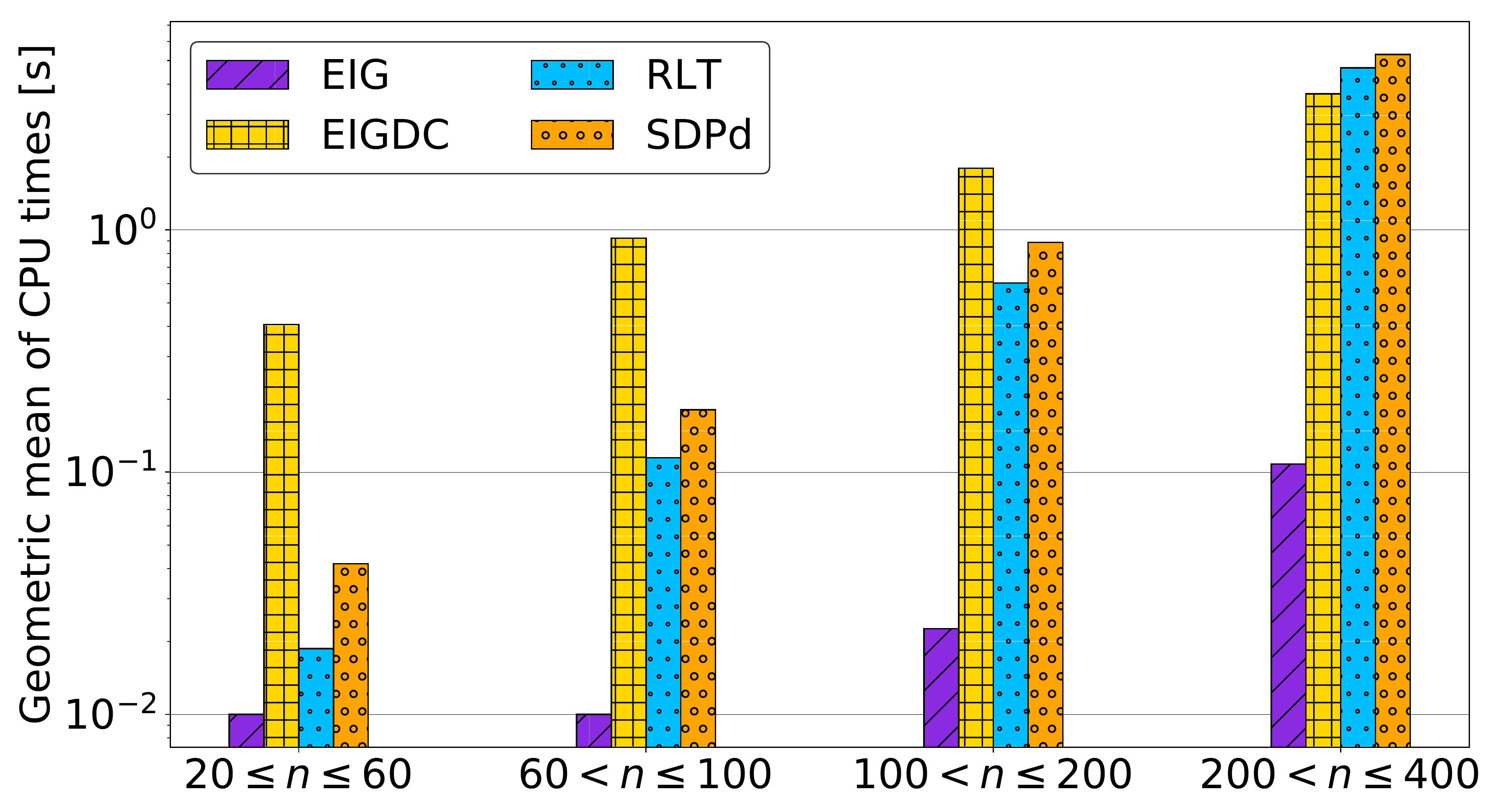}%
}
\subfloat[315 EIQP instances.\label{fig:root_node_times_EIQP}]{%
  \includegraphics[scale=0.18]{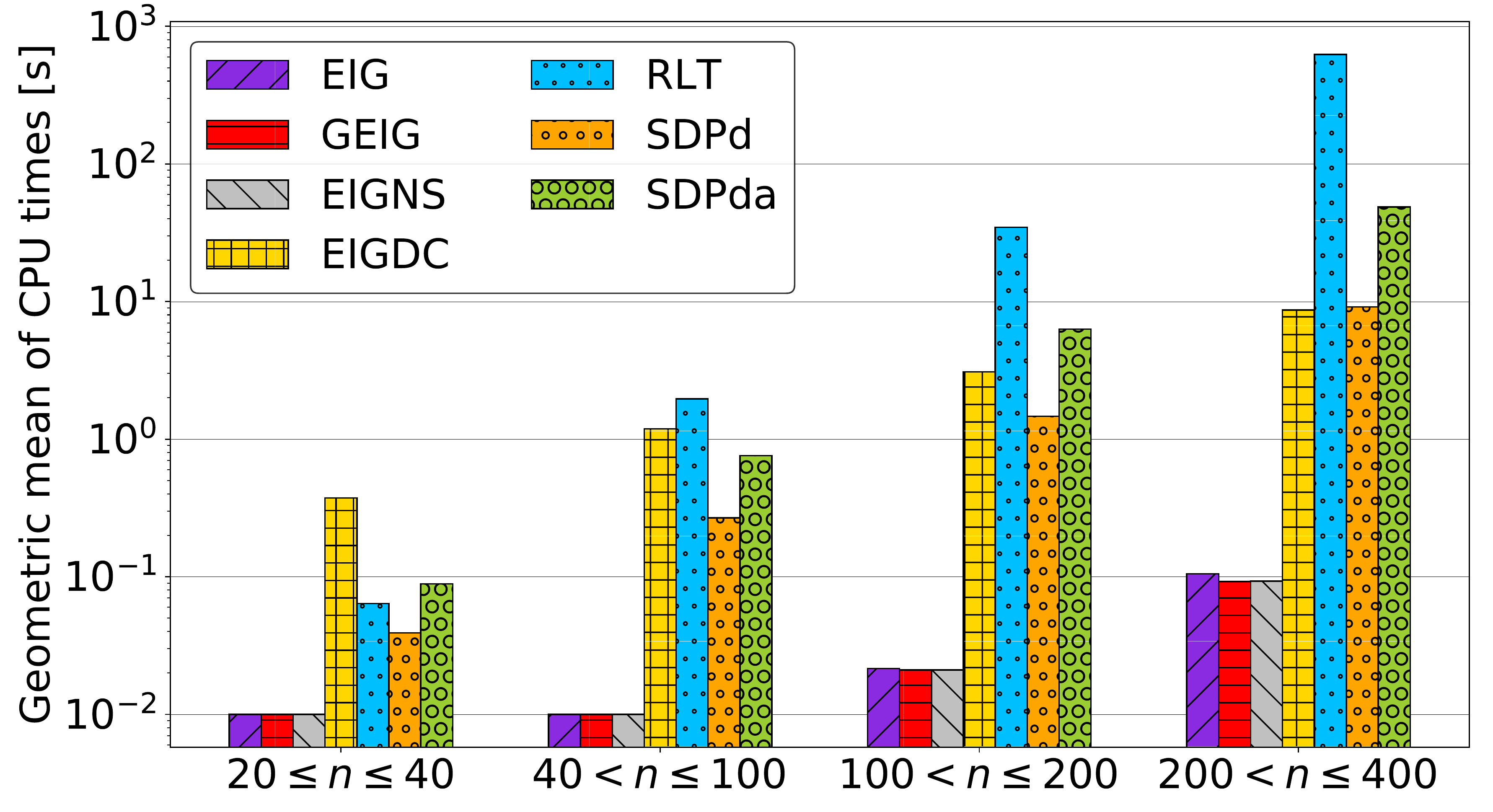}%
}
\caption{Geometric means of the CPU times required to solve the root-node relaxations.}
\label{fig:root_node_times}
\end{figure}

\subsection{Impact of the implementation on BARON's performance}
\label{sec:results_baron}

In this section, we demonstrate the benefits the proposed relaxation and branching techniques on the performance of the global optimization solver BARON. We consider the following versions of BARON 19.12:

\begin{itemize}

\item BARONnoqp: BARON without the spectral relaxations and without the spectral branching rule.

\item BARONnosb: BARON with the spectral relaxations but without the spectral branching rule.

\item BARON: BARON with the spectral relaxations and the approximate spectral branching rule. This is the default version of this solver.

\end{itemize}

As mentioned previously, the spectral branching rule introduced in \S\ref{spectral_branching} is only used for the binary instances. In order to analyze the impact of our implementation, we start by comparing the different versions of BARON through performance profiles. For instances which can be solved to global optimality within the time limit of 500 seconds, we use performance profiles based on CPU times. In this case, for a given solver, we plot the percentage of models that can be solved within a certain amount of time. For problems for which global optimality cannot be proven within the time limit, we employ performance profiles based on the optimality gaps at termination. These gaps are determined according to~\eqref{optimality_gap} by using the best lower and upper bounds reported by the solver under consideration. In this case, for a given solver, we plot the percentage of models for which the remaining gap is below a given threshold.

The performance profiles are presented in Figures~\ref{fig:baron_profiles_CBQP}--\ref{fig:baron_profiles_EIQP}. As seen in the figures, our implementation leads to very significant improvements in the performance of BARON. Clearly, for the CBQP and QSAP instances, both the spectral relaxations and the spectral branching strategy result in a version of BARON which is able to solve many more problems to global optimality. In addition, for the four collections considered in this comparison, BARON terminates with much smaller relaxation gaps in cases in which global optimality cannot be proven within the time limit.

\begin{figure}[htp]
\centering
\subfloat[960 CBQP instances.\label{fig:baron_profiles_CBQP}]{%
  \includegraphics[scale=0.15]{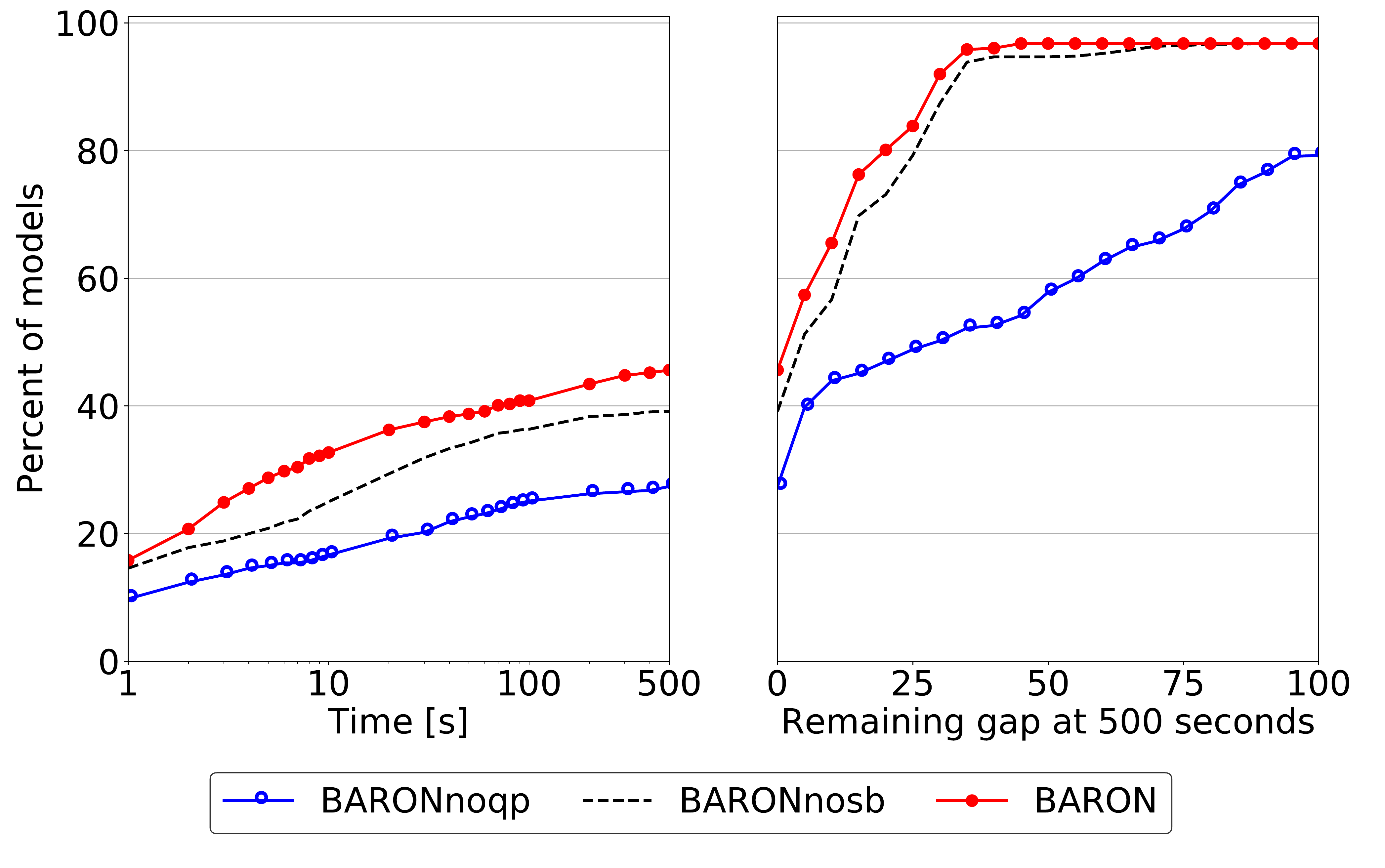}%
}
\subfloat[30 QSAP instances.\label{fig:baron_profiles_QSAP}]{%
  \includegraphics[scale=0.15]{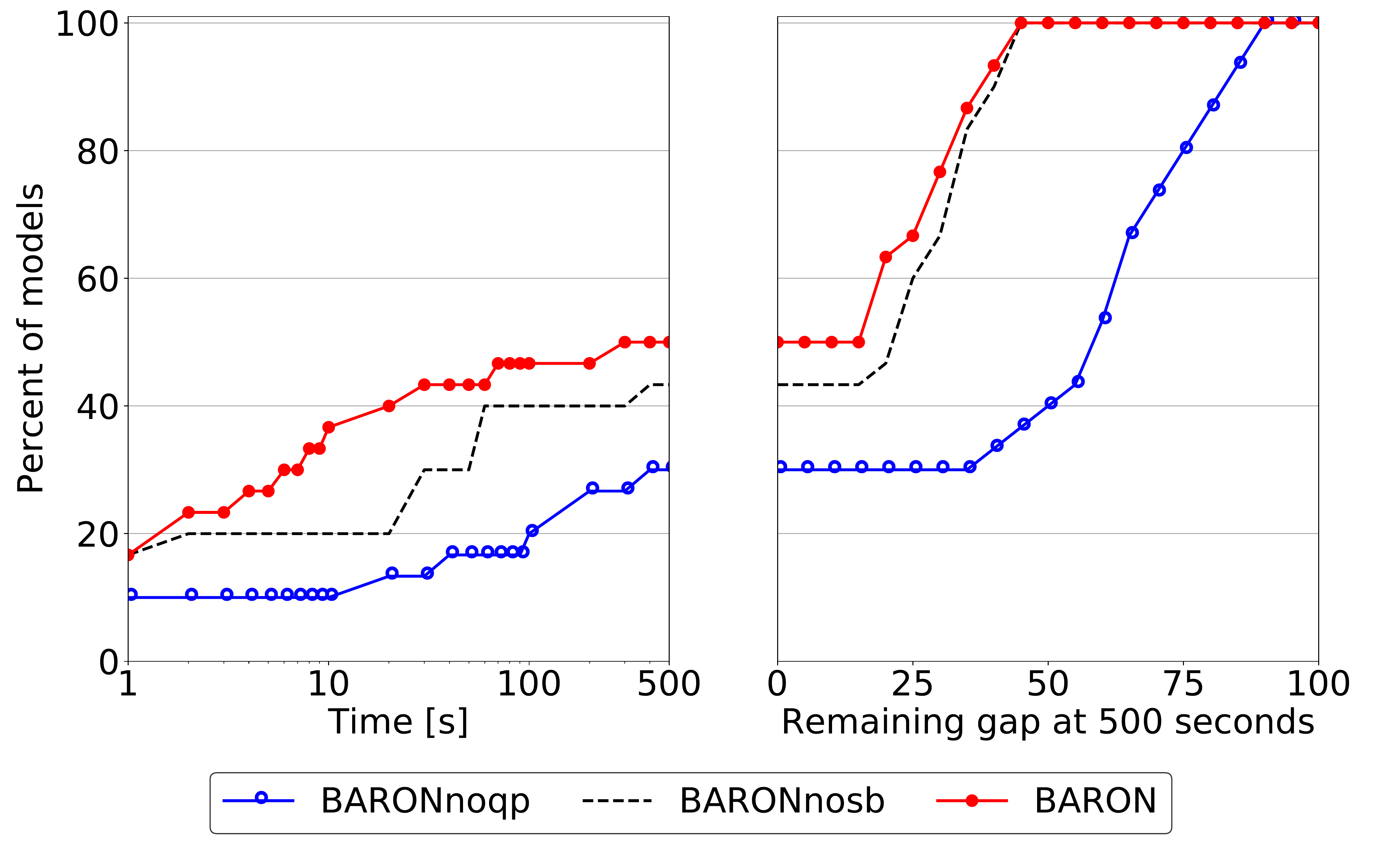}%
} \\
\subfloat[246 BoxQP instances.\label{fig:baron_profiles_BoxQP}]{%
  \includegraphics[scale=0.15]{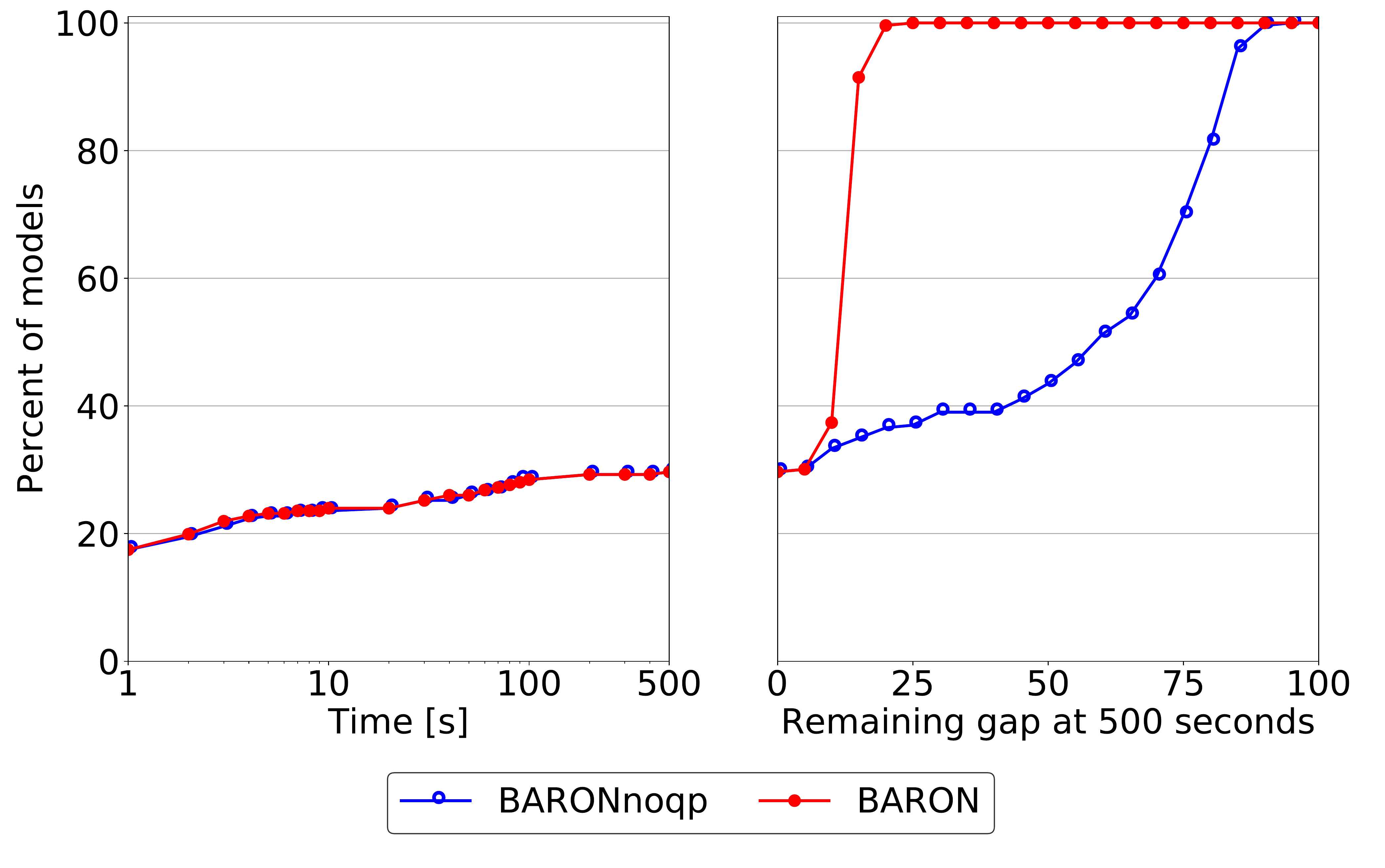}%
}
\subfloat[315 EIQP instances.\label{fig:baron_profiles_EIQP}]{%
  \includegraphics[scale=0.15]{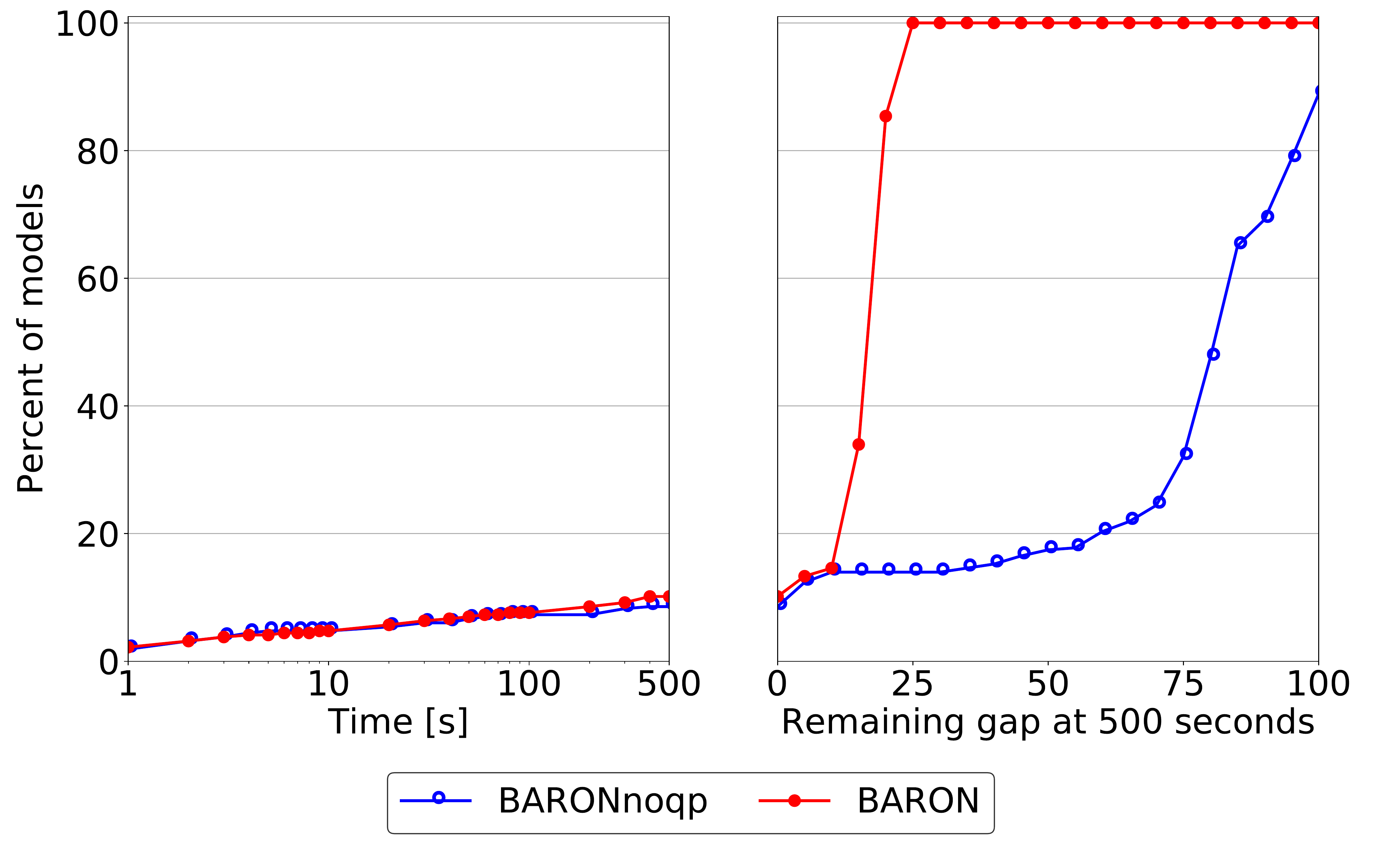}%
}
\caption{Comparison between the different versions of BARON.}
\label{fig:baron_profiles}
\end{figure}

Next, we provide a more detailed comparison between BARON and BARONnoqp. To this end, we eliminate from the test set all the problems that can be solved trivially by both solvers (146 instances). A problem is regarded as trivial if it can be solved by both solvers in less than one second. After eliminating all of these problems from the original test set, we obtain a new test set consisting of 1405 instances. 

We first consider the nontrivial problems that are solved to global optimality by at least one of the two the versions of the solver (412 instances). For this analysis, we compare the performance of the two solvers by considering their CPU times. In this comparison, we say that the two solvers perform similarly if their CPU times are within 10\% of each other. The results are presented in Figure~\ref{fig:baron_vs_baron_noqp_speedups}. As the figure indicates, BARON is significantly faster than BARONnoqp. For nearly 50\% of the problems considered in this comparison, BARON is at least one of magnitude faster than BARONnoqp

Now, we consider the nontrivial problems that neither of the two solvers are able to solve to global optimality within the time limit (993 instances). In this case, we analyze the performance of these solvers by comparing the gaps reported at termination. For the purposes of this comparison, we say that two solvers obtain similar gaps if their remaining gaps are within 10\% of each other. The results are presented in Figure~\ref{fig:baron_vs_baron_noqp_gaps}. As seen in the figure, for more than 90\% considered in this comparison, BARON reports significantly termination gaps than BARONnoqp.

\begin{figure}[htp]
\centering
\subfloat[CPU times (412 nontrivial instances). \label{fig:baron_vs_baron_noqp_speedups}]{%
  \includegraphics[scale=0.19]{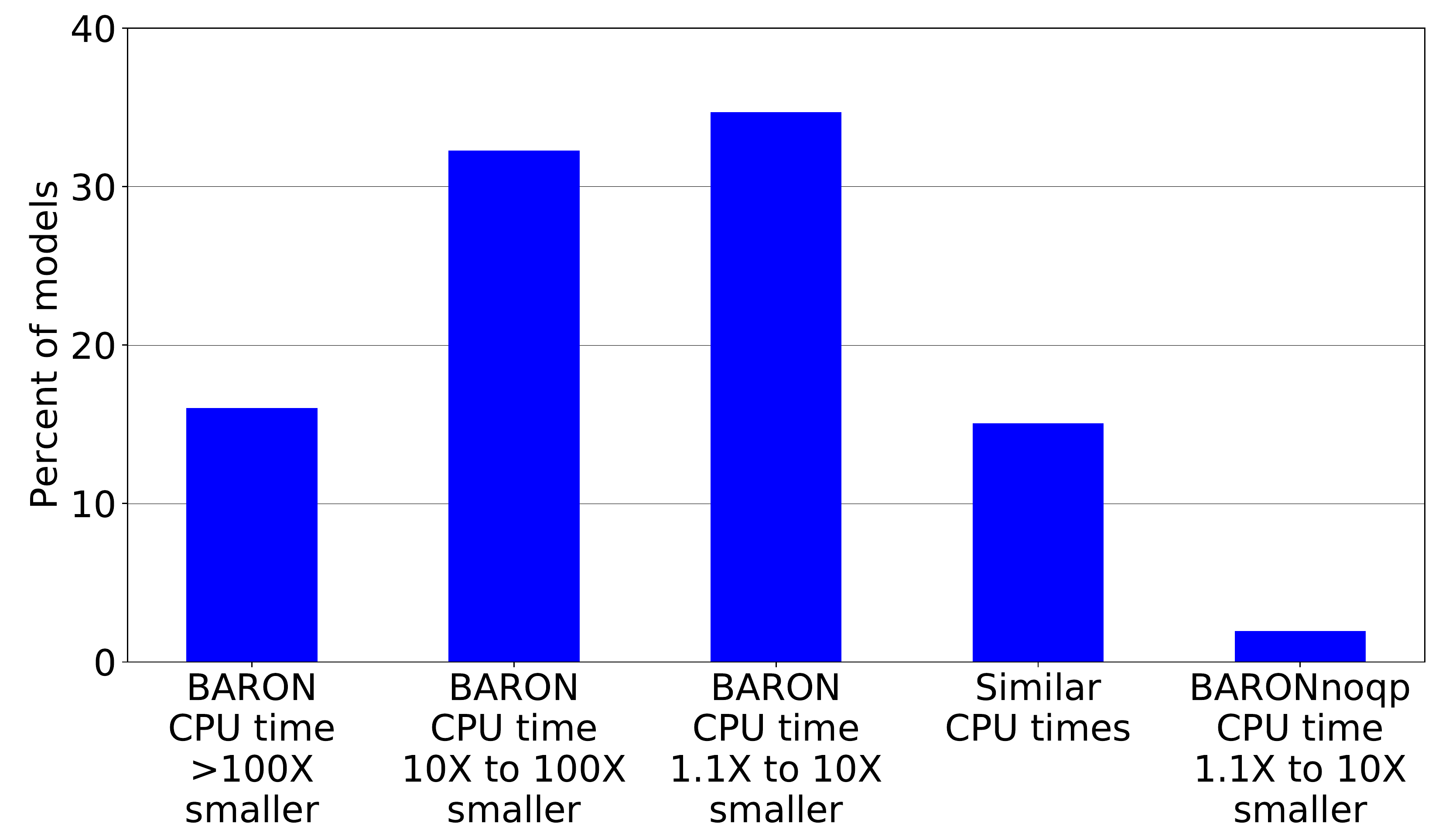}%
}
\subfloat[Relative gaps (993 nontrivial instances). \label{fig:baron_vs_baron_noqp_gaps}]{%
  \includegraphics[scale=0.19]{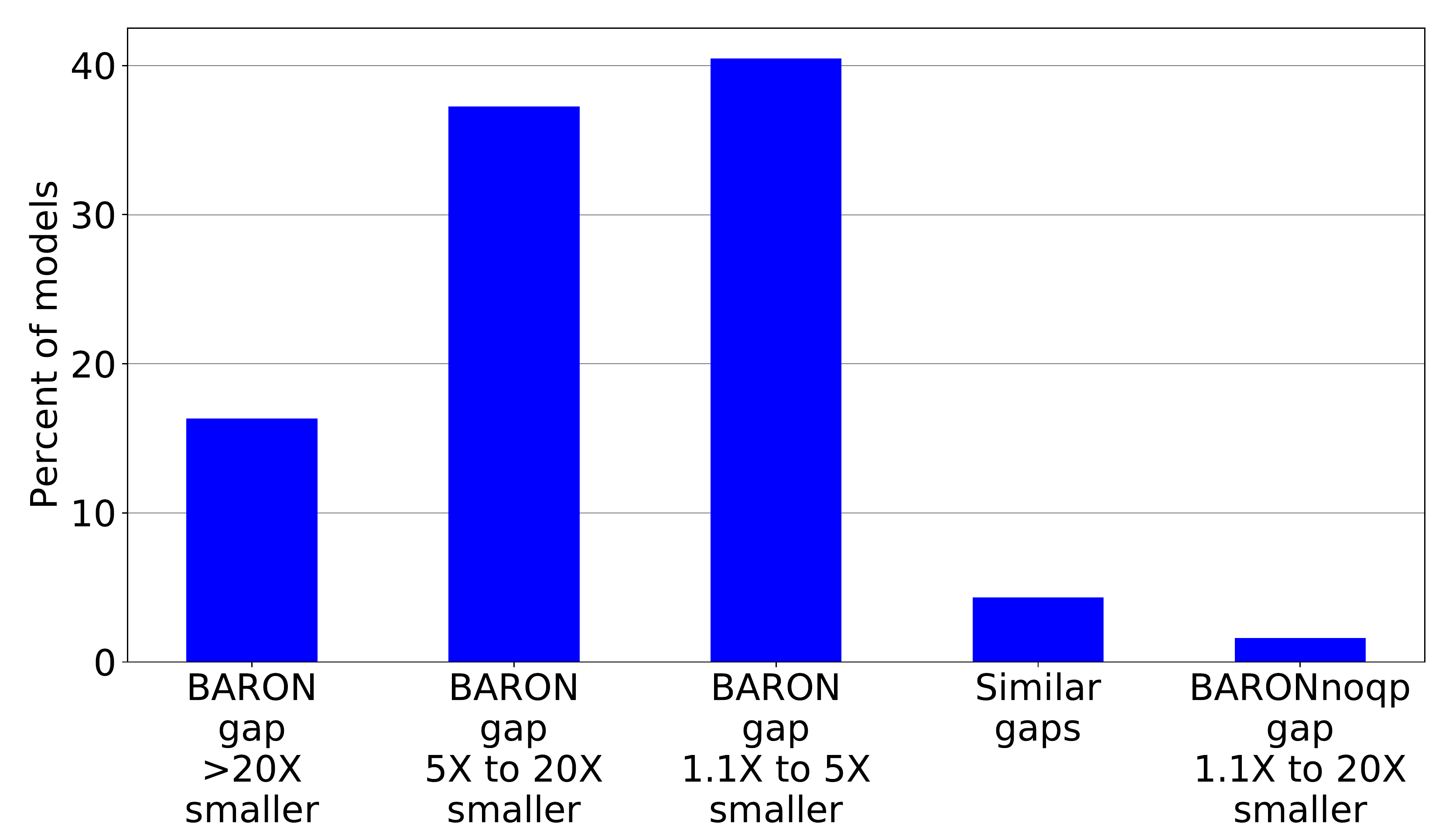}%
}
\caption{One-to-one comparison between BARON and BARONnoqp.}
\label{fig:baron_vs_baron_no_qp}
\end{figure}

\subsection{Comparison between global optimization solvers}
\label{sec:results_all_solvers}

In this section, we compare BARON with other global optimization solvers. We start by providing a comparison between different solvers using the same type of performance profiles considered in the previous section. These profiles are shown in Figures~\ref{fig:global_solvers_profiles_CBQP}--\ref{fig:global_solvers_profiles_EIQP}. As seen in these figures, BARON performs well in comparison to other solvers. For both the CBQP and QSAP instances, BARON is faster than the other solvers and solves many more problems to global optimality. For the QSAP and BoxQP instances, BARON terminates with smaller gaps than the other solvers in cases in which global optimality cannot be proven within the time limit. Many of the BoxQP and EIQP instances are very challenging and cannot be globally solved within the time limit by solvers considered in this analysis.

\begin{figure}[htp]
\centering
\subfloat[960 CBQP instances. \label{fig:global_solvers_profiles_CBQP}]{%
  \includegraphics[scale=0.15]{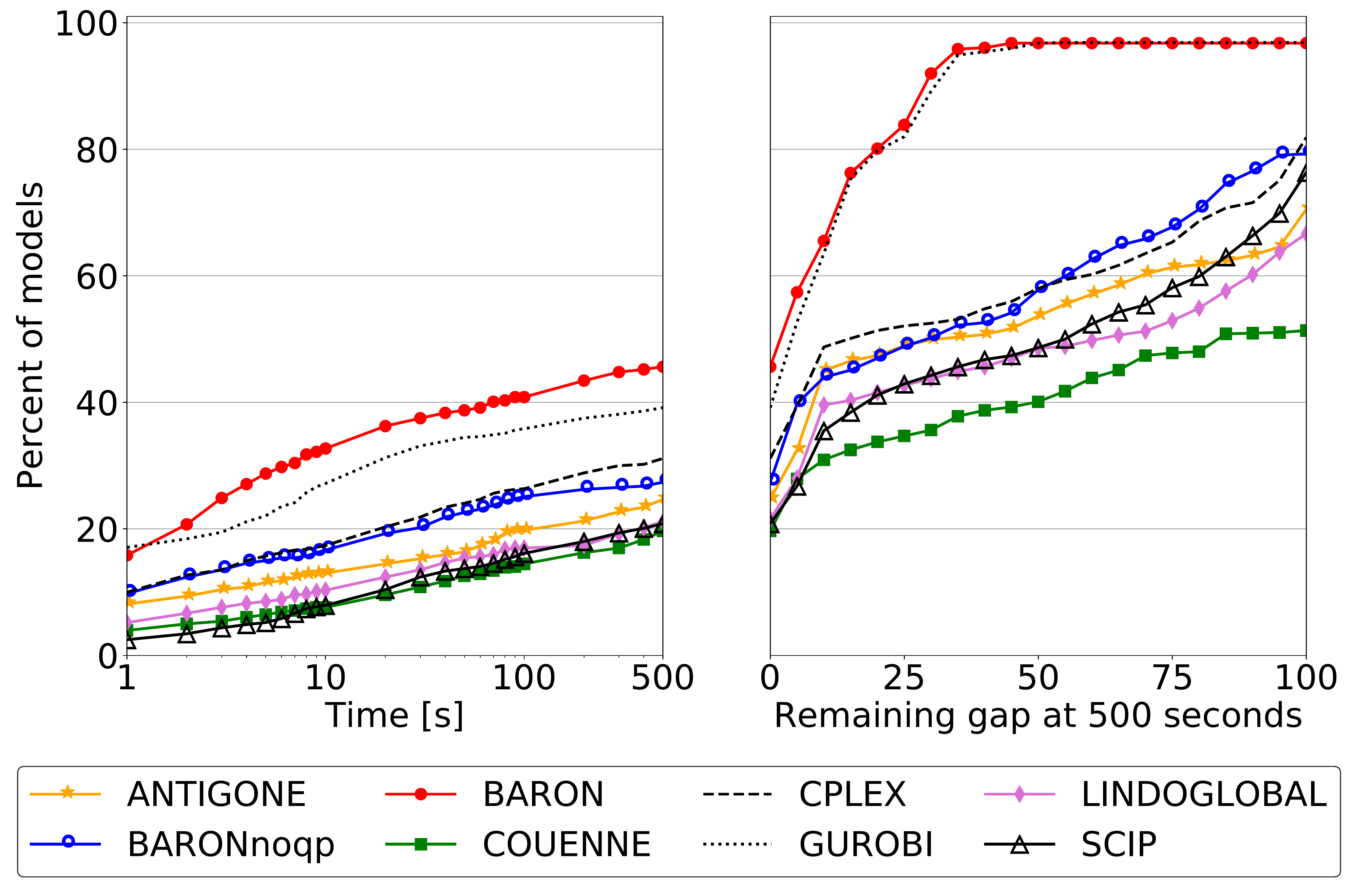}%
}
\subfloat[30 QSAP instances. \label{fig:global_solvers_profiles_QSAP}]{%
  \includegraphics[scale=0.15]{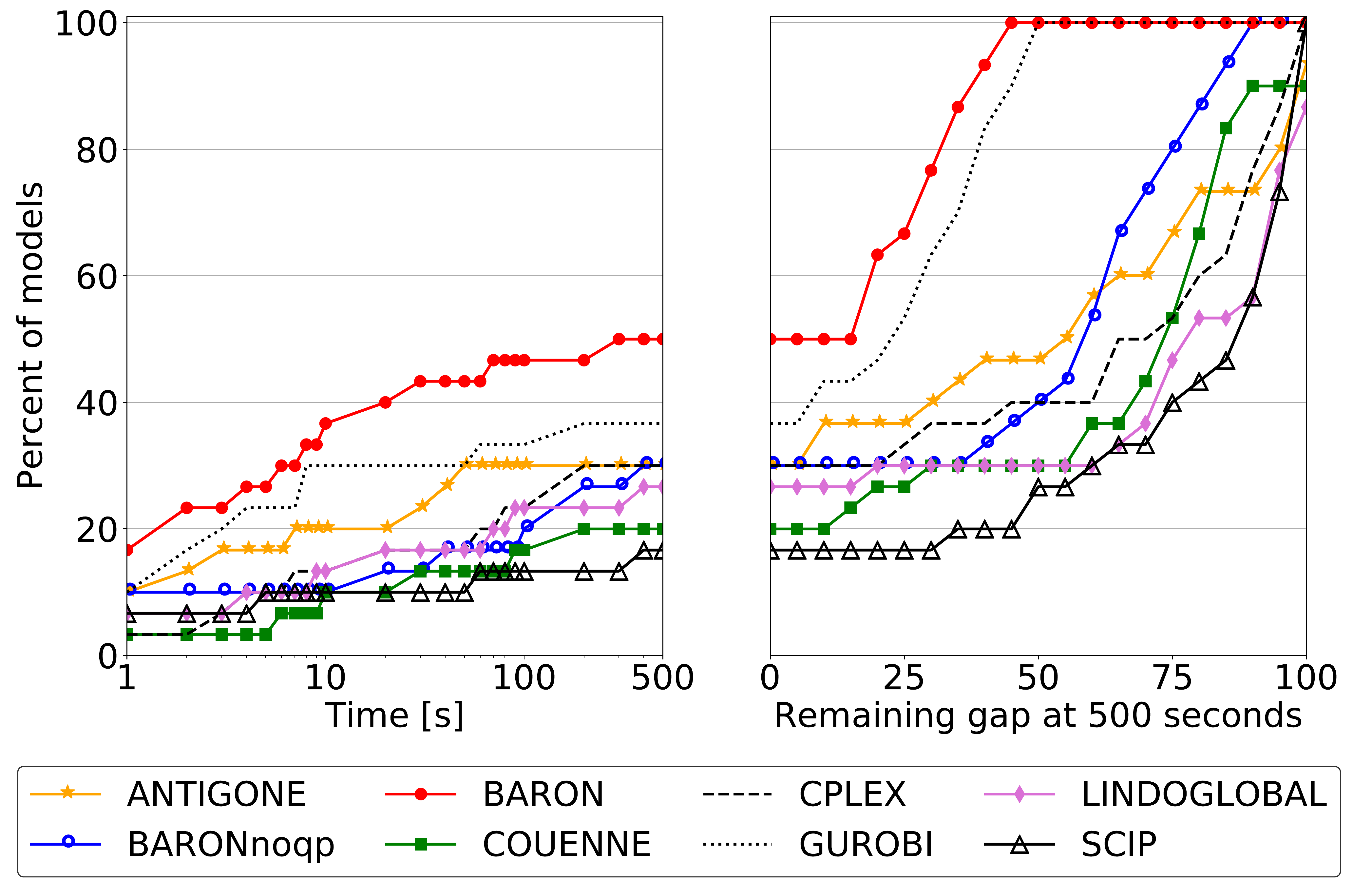}%
} \\
\subfloat[246 BoxQP instances. \label{fig:global_solvers_profiles_BoxQP}]{%
  \includegraphics[scale=0.15]{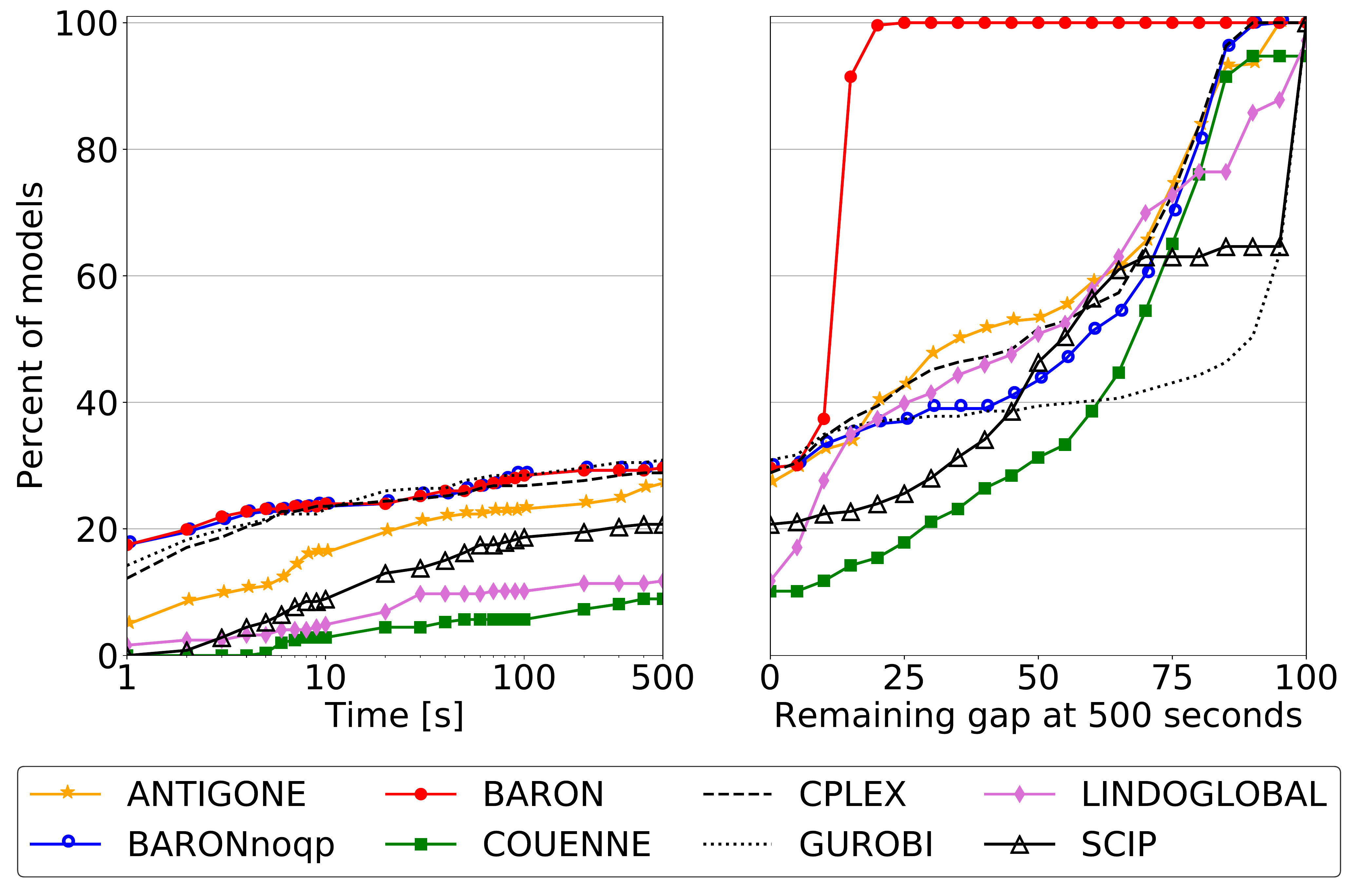}%
}
\subfloat[315 EIQP instances. \label{fig:global_solvers_profiles_EIQP}]{%
  \includegraphics[scale=0.15]{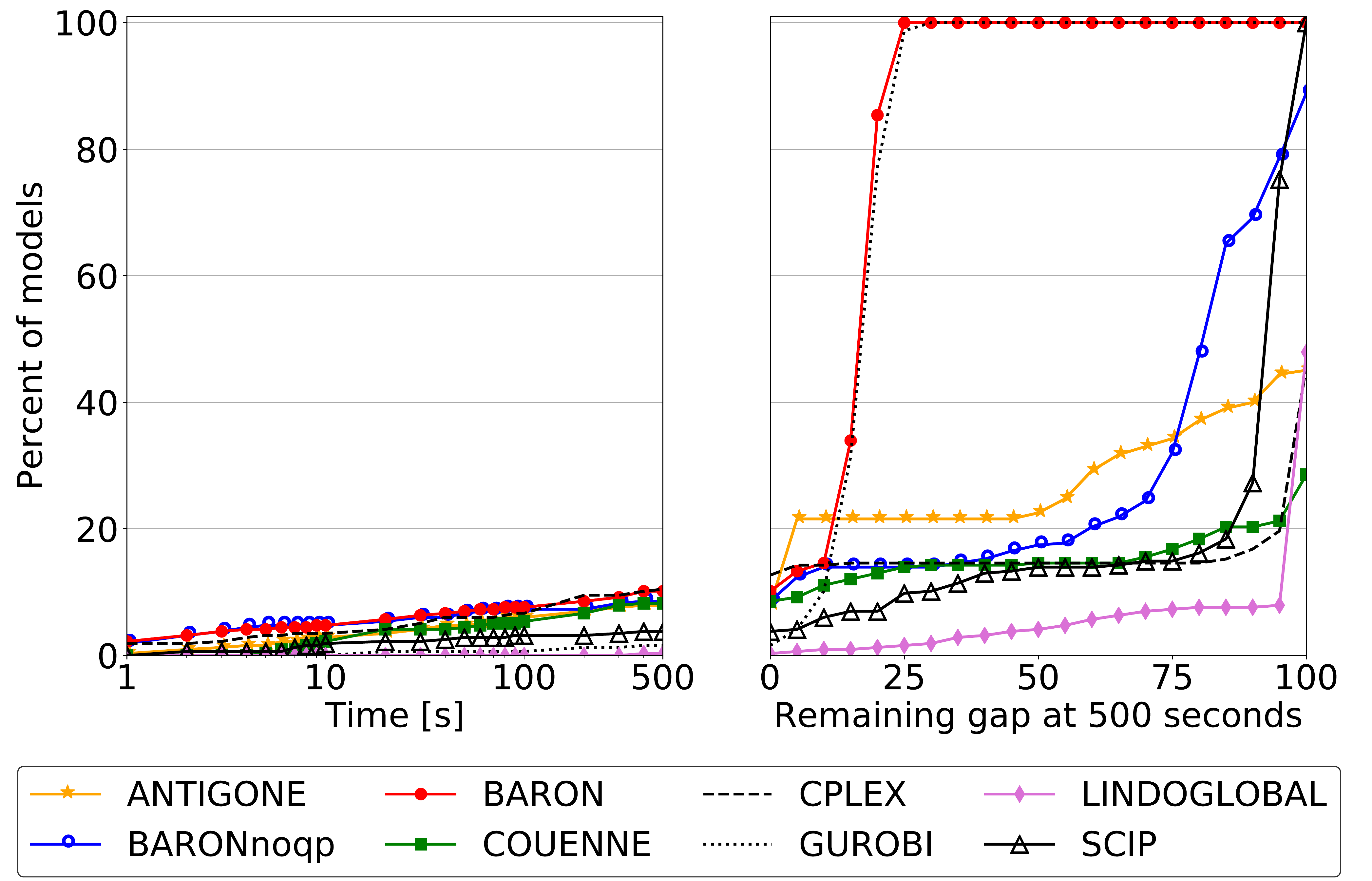}%
}
\caption{Comparison between global optimization solvers.}
\label{fig:global_solvers_profiles}
\end{figure}

Next, we provide a detailed analysis involving BARON, CPLEX and GUROBI. For this analysis, we use the same type of bar plots employed in Figure \ref{fig:baron_vs_baron_no_qp}. We start with a one-to-one comparison between BARON and CPLEX. First, we eliminate from the test set all the problems that can be solved trivially by both solvers (124 instances), obtaining a new test set with 1427 instances. In Figure~\ref{fig:baron_vs_cplex_speedups}, we consider the nontrivial problems that are solved to global optimality by at least one of the two solvers (445 instances), whereas in Figure~\ref{fig:baron_vs_cplex_gaps}, we consider nontrivial problems that neither of the two solvers are able to solve to global optimality within the time limit (982 instances). As both figures show, BARON performs significantly better than CPLEX. For 80\% of the instances considered in Figure~\ref{fig:baron_vs_cplex_speedups}, BARON is at least twice as fast as CPLEX. Similarly, for 90\% of the instances considered in Figure~\ref{fig:baron_vs_cplex_gaps}, BARON reports significantly smaller termination gaps than CPLEX.

\begin{figure}[htp]
\centering
\subfloat[CPU times (445 nontrivial instances). \label{fig:baron_vs_cplex_speedups}]{%
  \includegraphics[scale=0.19]{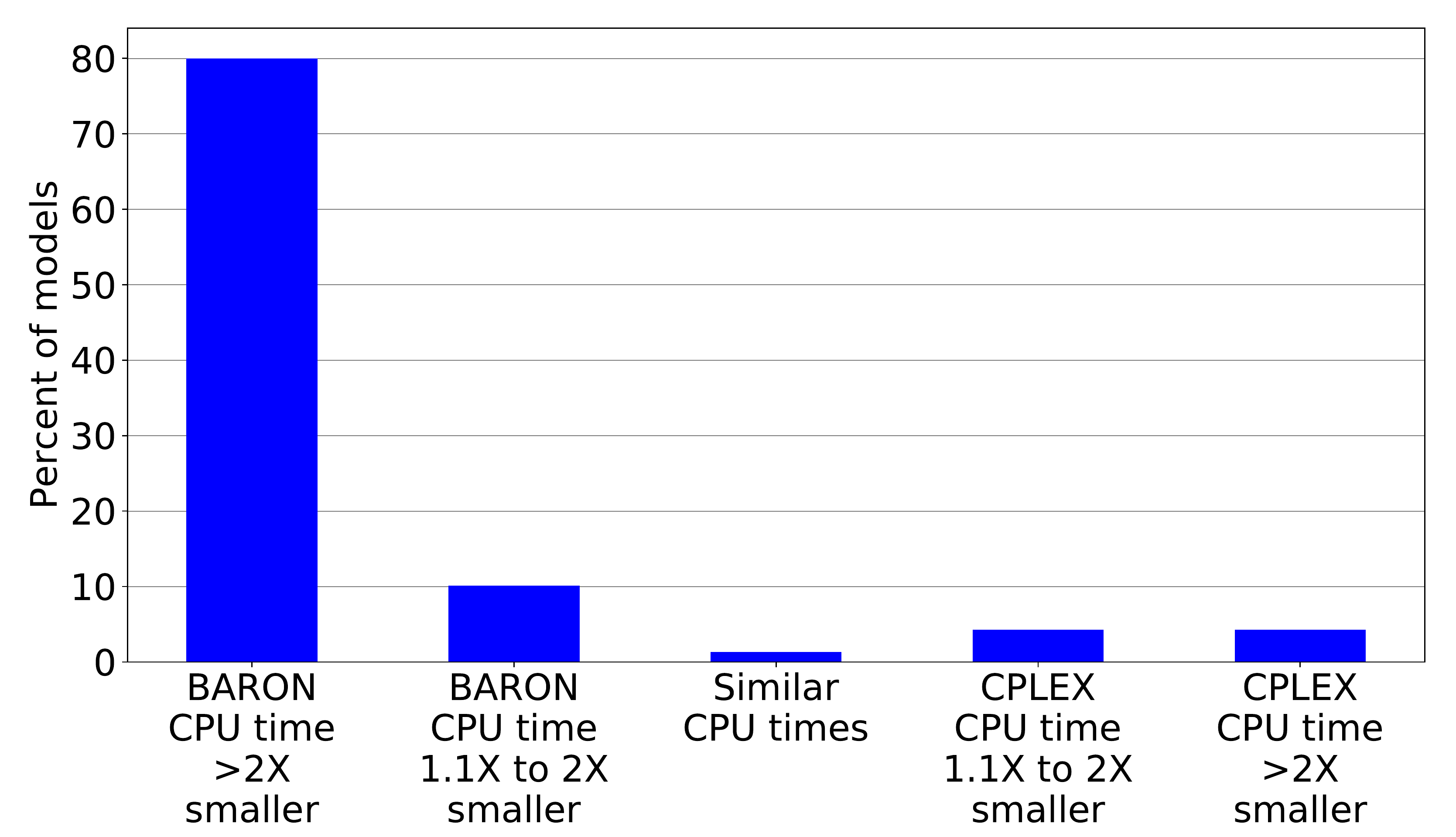}%
}
\subfloat[Relative gaps (982 nontrivial instances). \label{fig:baron_vs_cplex_gaps}]{%
  \includegraphics[scale=0.19]{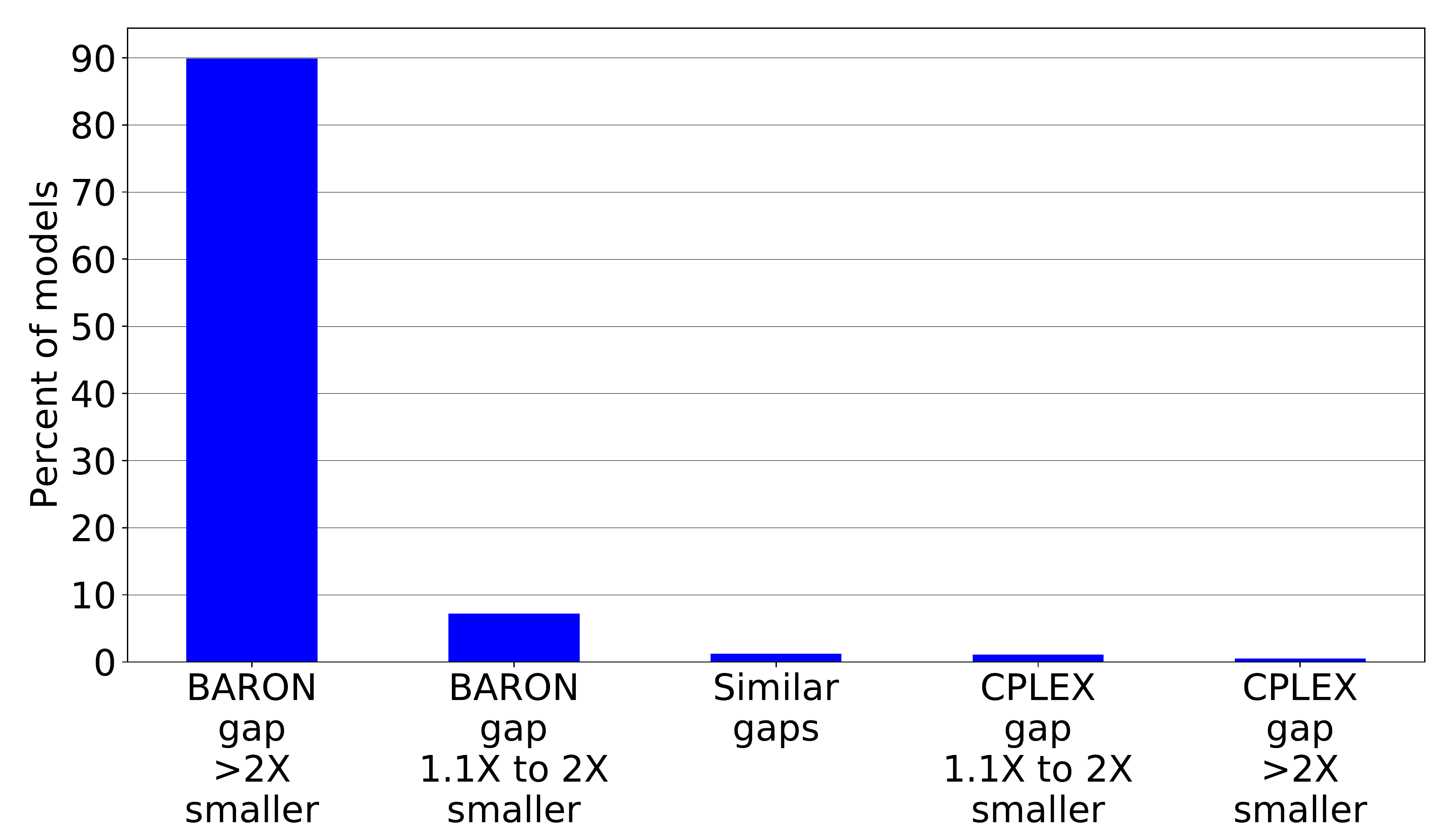}%
}
\caption{One-to-one comparison between BARON and CPLEX.}
\label{fig:baron_vs_cplex}
\end{figure}

Now, we present a similar one-to-one comparison between BARON and GUROBI. Once again, we eliminate from the test set all the problems that can be solved trivially by both solvers (185 instances), resulting in a new test set with 1366 instances. In Figure~\ref{fig:baron_vs_gurobi_speedups}, we consider the nontrivial problems that are solved to global optimality by at least one of the two solvers (380 instances), whereas in Figure~\ref{fig:baron_vs_gurobi_gaps}, we consider nontrivial problems that neither of the two solvers are able to solve to global optimality within the time limit (986 instances). For more than 60\% of the instances considered in Figure~\ref{fig:baron_vs_gurobi_speedups}, BARON is at least twice as fast as GUROBI, whereas for most of the problems considered in Figure~\ref{fig:baron_vs_gurobi_gaps}, the two solvers report similar termination gaps.

\begin{figure}[htp]
\centering
\subfloat[CPU times (380 nontrivial instances). \label{fig:baron_vs_gurobi_speedups}]{%
  \includegraphics[scale=0.19]{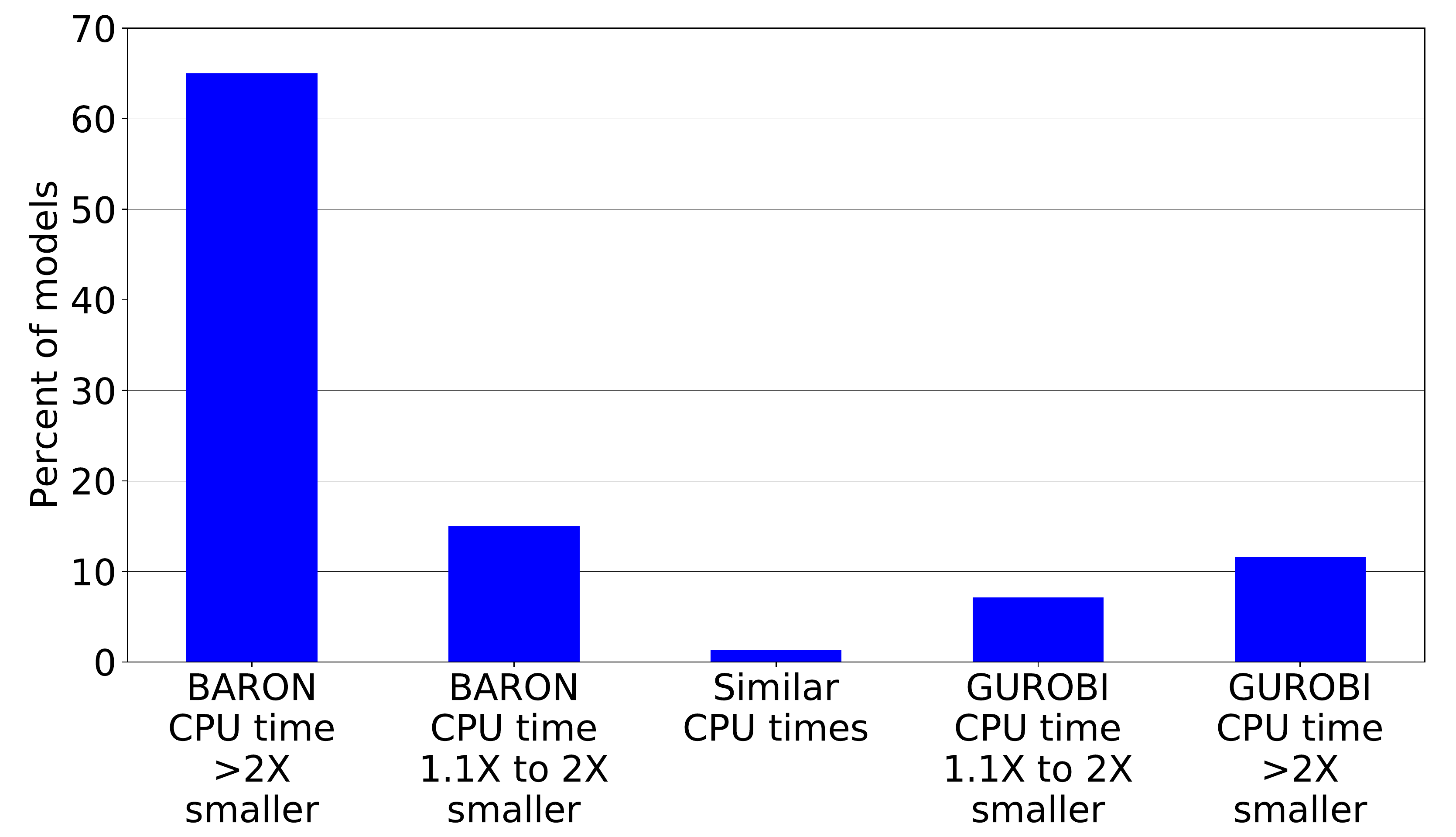}%
}
\subfloat[Relative gaps (986 nontrivial instances). \label{fig:baron_vs_gurobi_gaps}]{%
  \includegraphics[scale=0.19]{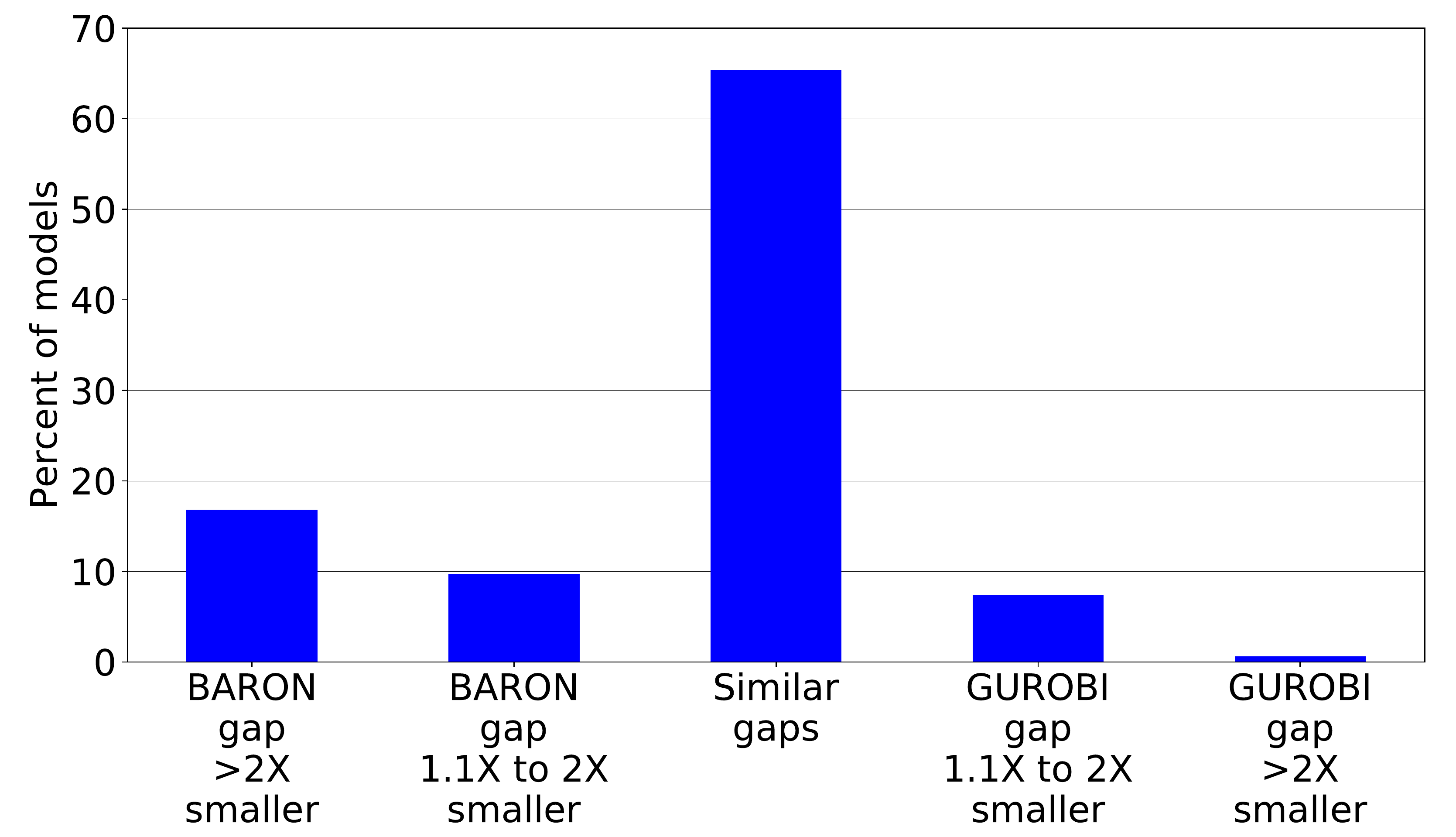}%
}
\caption{One-to-one comparison between BARON and GUROBI.}
\label{fig:baron_vs_gurobi}
\end{figure}

\subsection{Comparison with the QCR method}
\label{sec:results_comparison_with_qcr}

In this section, we provide a numerical comparison between the relaxation and branching strategies proposed in this paper, and the QCR approach reviewed in \S\ref{sec:qcr_based_relaxations}. To this end, we consider the 990 binary CBQP and QSAP instances described in~\cite{qp_miqp_tests}.

To apply the QCR method, we proceed in two steps. In the first step, for each test problem, we solve the SDP~\eqref{SDP_d_a_x} with MOSEK and use its dual solution to construct a reformulated convex binary quadratic program of the form~\eqref{QCR_da}. In the second step, we solve the reformulated problems using a customized version of BARON, which denote by BARONqcr. This version of BARON only differs from the default one in two aspects. First, BARONqcr is devised in a way such that, at a given node of the branch-and-bound tree, the lower bound is obtained by solving the continuous relaxation of the reformulated problem, which is a convex QP. To this end, in the lower bounding routines of BARONqcr, we have disabled the LP, NLP, MILP and recently introduced spectral relaxations. Second, in BARONqcr we have also turned off the spectral branching rule and replaced it with the reliability branching strategy described in~\cite{ks19}. Recall that, under the QCR approach, the perturbation parameters used to derive the reformulated problem are not updated during the execution of the branch-and-bound algorithm. As a result, the QP relaxations constructed at different nodes of the branch-and-bound tree of BARONqcr only differ from one another in the variables that are fixed. These convex QP relaxations are solved using CPLEX.

In our experiments, we run BARONqcr with the same relative/absolute tolerances and time limit used for BARON. For all of the considered instances, the amount of time required to solve the SDP relaxation involved in the first step of the QCR method was much smaller than the CPU time corresponding to BARONqcr. As a result, when comparing BARON and BARONqcr, we ignore the time required to solve these SDPs.

We first compare BARON and BARONqcr in terms of the lower bounds reported at the root-node. In this case, we say that a given solver obtains a better lower bound if the relative lower bound difference is greater than $10^{-3}$. For cases in which the magnitude of the lower bound is below one, we use absolute differences. The results are presented in Table~\ref{tab:baron_vs_baron_qcr_root_node}. In this table, for each test library, we provide the number, and in parentheses the percentage, of problems for which a given solver reports better root-node lower bounds. As the results in this table indicate, BARONqcr obtains better root-node lower bounds for most of the instances considered in this comparison.

At the root node of the branch-and-bound tree, the lower bound obtained by BARONqcr is given by the continuous relaxation of~\eqref{QCR_da}, and as a result, it is equal to the bound provided by the SDP~\eqref{SDP_d_a_x}. On the other hand, for many of the problems considered in this comparison, the eigenvalue relaxation in the nullspace of $A$ is tighter than the polyhedral relaxations implemented in BARON. Hence, in these cases, BARON relies on this quadratic relaxation to obtain lower bounds. As shown in Theorem~\ref{Theorem_2}, the SDP~\eqref{SDP_d_a_x} is at least as tight as the eigenvalue relaxation in the nullspace of $A$. Therefore, it is not surprising that, for many of the problems considered in Table~\ref{tab:baron_vs_baron_qcr_root_node}, BARONqcr provides tighter root-node bounds than BARON.

\begin{table}[htbp]
\centering
\caption{Root-node lower bounds given by BARON and BARONqcr.}
\label{tab:baron_vs_baron_qcr_root_node}
\begin{tabular}{ccc}
\toprule
Test library & BARON better & BARONqcr better \\
\midrule
CBQP         & 141 (15\%)   & 819 (85\%) \\
QSAP         & 2 (7\%)      & 28 (93\%)  \\ 
\bottomrule          
\end{tabular}
\end{table}

Next, we compare BARON and BARONqcr using the same type of bar plots employed in previous sections. We start by eliminating from the test set all the problems that can be solved trivially by both solvers (11 instances), obtaining a new test set with 979 instances. In Figure~\ref{fig:baron_vs_baron_qcr_speedups}, we consider the nontrivial problems that are solved to global optimality by at least one of the two solvers (442 instances), whereas in Figure~\ref{fig:baron_vs_baron_qcr_gaps}, we consider nontrivial problems that neither of the two solvers are able to solve to global optimality within the time limit (537 instances). As both figures show, BARON performs significantly better than BARONqcr. For nearly 70\% of the instances considered in Figure~\ref{fig:baron_vs_baron_qcr_speedups}, BARON is at least an order of magnitude faster than BARONqcr. Similarly, for more than 80\% of the instances considered in Figure~\ref{fig:baron_vs_baron_qcr_gaps}, BARON reports smaller termination gaps than BARONqcr.

Even though BARONqcr reports tighter root-node lower bounds than BARON for most of the instances considered in this comparison, during the branch-and-bound search, the lower bounds obtained by BARON improve much more quickly than those provided by BARONqcr. This is due to the fact that, in BARON, we update the perturbation parameters used to construct the quadratic relaxations as we branch. In addition, BARON makes use of the approximate spectral rule introduced in \S\ref{spectral_branching}, which as shown in \S\ref{sec:results_baron}, also has a significant impact on the performance of this solver.

\begin{figure}[htp]
\centering
\subfloat[CPU times (442 nontrivial instances). \label{fig:baron_vs_baron_qcr_speedups}]{%
  \includegraphics[scale=0.19]{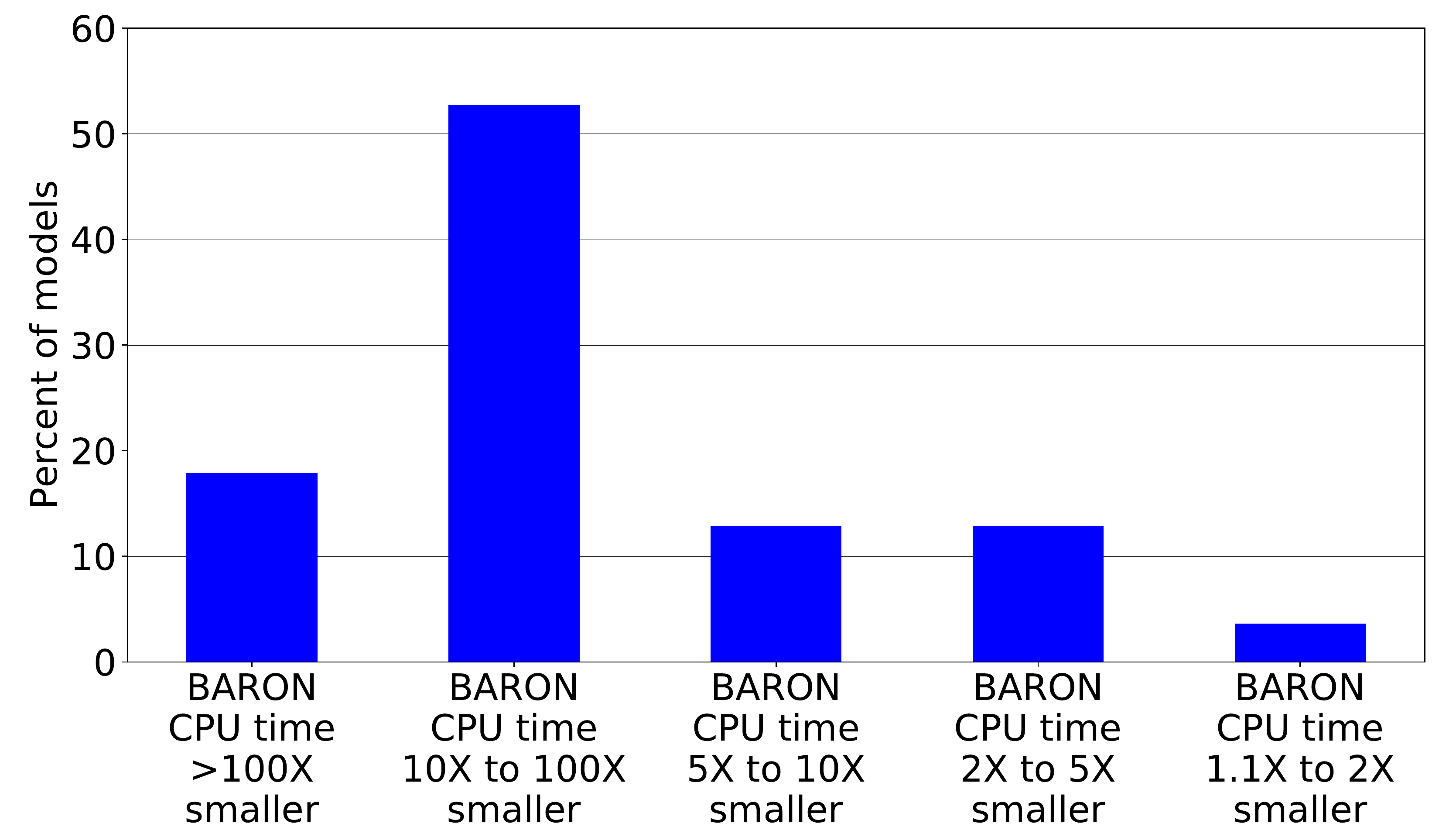}%
}
\subfloat[Relative gaps (537 nontrivial instances). \label{fig:baron_vs_baron_qcr_gaps}]{%
  \includegraphics[scale=0.19]{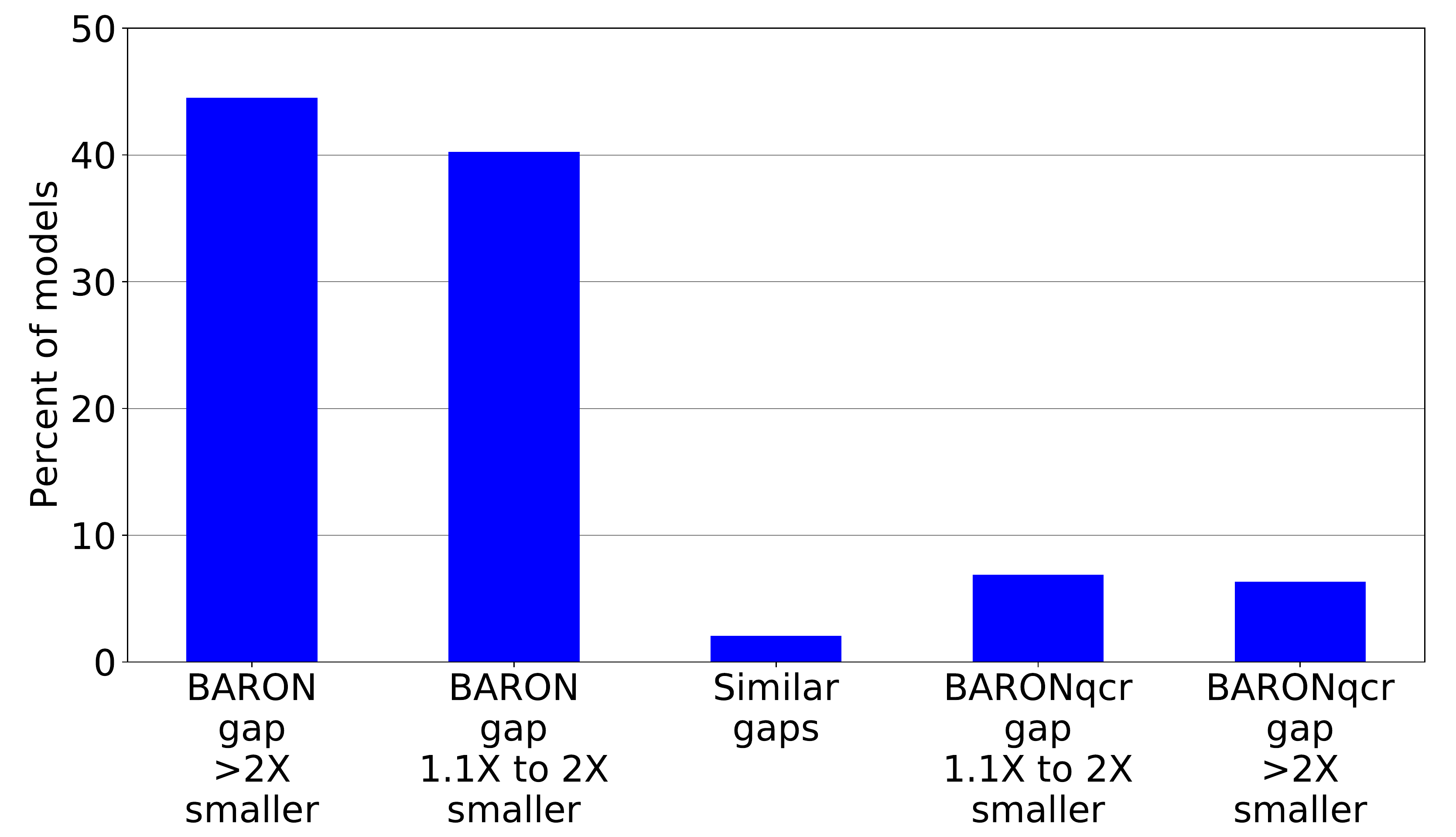}%
}
\caption{One-to-one comparison between BARON and BARONqcr.}
\label{fig:baron_vs_baron_qcr}
\end{figure}

\section{Conclusions}
\label{conclusions}

We introduced a family of convex quadratic relaxations for nonconvex QPs and MIQPs. We studied the theoretical properties of these relaxations, showing that they are equivalent to certain SDPs. We devised a novel branching variable selection strategy which approximates the impact of the branching decisions on the quality of these relaxations. To assess the benefits of our approach, we incorporated the proposed relaxation and branching techniques in the global solver BARON, and tested our implementation on a large collection of problems. Results demonstrated that, our implementation leads to a significant improvement in the performance of BARON, enabling it to solve many more problems to global optimality.

\bibliographystyle{plain}
\bibliography{carlos,reference}
\end{document}